\newtheorem{theorem}{Theorem}[section]
\newtheorem{lemma}[theorem]{Lemma}
\newtheorem{corollary}[theorem]{Corollary}
\newtheorem{proposition}[theorem]{Proposition}
\theoremstyle{definition}
\newtheorem{definition}[theorem]{Definition}
\newtheorem{hypothesis}[theorem]{Hypothesis}
\newtheorem{example}[theorem]{Example}
\theoremstyle{remark}
\newtheorem{remark}[theorem]{Remark}
\numberwithin{equation}{section}
\begin{document}

\title[Feynman-Kac formula on semi-Dirichlet forms]{Bivariate Revuz measures and the Feynman-Kac formula on semi-Dirichlet forms}

\author{Liping Li}
\address{School of Mathematical Sciences, Fudan University. 220 Handan Road, Shanghai 200433 China}
\email{lipingli10@fudan.edu.cn}

\author{Jiangang Ying}
\address{School of Mathematical Sciences, Fudan University. 220 Handan Road, Shanghai 200433 China}
\email{jgying@fudan.edu.cn}

\subjclass[2000]{MSC 31C25, MSC 60J55, MSC 60J60}



\keywords{(Lower bounded) semi-Dirichlet forms, Feynman-Kac formula, Bivariate Revuz measures.}

\begin{abstract}
In this paper we shall first establish the theory of bivariate Revuz correspondence of positive additive functionals
under a semi-Dirichlet form which is associated with a right Markov process $X$ satisfying the sector condition but
without duality. We  extend most of the classical results about the bivariate Revuz measures under the duality assumptions
to the case of semi-Dirichlet forms. As the main results of this paper, we prove that
for any exact multiplicative functional $M$ of $X$, the subprocess $X^M$ of $X$ killed by $M$ also satisfies the sector condition and  we then characterize
the semi-Dirichlet form associated with $X^M$ by using the bivariate Revuz measure, which extends the classical Feynman-Kac formula.
\end{abstract}

\maketitle

\section{Introduction}

We shall briefly explain the title of this paper first.
The original Feynman-Kac formula is the characterization of the transition semigroup corresponding
to the classical Schr\"odinger equation. Hence any topic related to this is called a Feynman-Kac formula.
The essential point of Dirichlet form theory is the one-to-one correspondence between Markov processes
and Dirichlet forms. An decreasing multiplicative functional of a Markov process gives us a
subprocess and its transition semigroup, which corresponds to the generalized Schr\"odinger equation.
The Feynman-Kac formula means the characterization of Dirichlet form of the subprocess, if it is valid.

Another word in the title we need to explain is semi-Dirichlet form. The classical theory of Dirichlet
form, referring to \cite{FU},
is the energy form of a Markov process $X$ which is symmetric with respect to a $\sigma$-finite
measure $m$ on the state space $E$. This theory was extended to non-symmetric
Dirichlet form where a pair of Markov processes are dual with respect to $m$ and the bilinear form
corresponding to the infinitesimal generator satisfies so-called sector condition so that
theory of functional analysis can be used. For non-symmetric Dirichlet form,
 refer to \cite{MR}. More generally, the semi-Dirichlet form is the bilinear form of a Markov
process whose infinitesimal generator satisfies the sector condition with respect to a measure.
The big difference between Dirichlet form and semi-Dirichlet form
is that the measure $m$ is excessive for the associated process in former case, so
that the results in probabilistic potential theory may be used directly, and not for the later case.
For semi-Dirichlet forms, refer to \cite{PJF}.

The main purpose of this paper is to prove that any subprocess of a Markov process
associating with a semi-Dirichlet form satisfies the sector condition and to characterize the semi-Dirichlet
form of the subprocess. For the Feynman-Kac formula concerning decreasing continuous multiplicative functionals
in symmetric, non-symmetric and semi-Dirichlet form, refer to \cite{FU}, \cite{MR} and \cite{PJF}, respectively.
For one concerning decreasing multiplicative
functionals (non-local MF's), refer to \cite{JGY} and \cite{JGY3}.

We shall adopt the standard notation and terminology of right Markov processes in 
\cite{BG}, \cite{RGK4} and \cite{SMJ2}. The symbol
`:=' means a definition. 
Let $(E,\mathcal{B})$ be a metrizable Lusin space, and $m$ a $\sigma$-finite measure on $E$.  Let
$$X=(\Omega,\mathcal{M},\mathcal{M}_t,X_t,\theta_t,P^x)$$
be a right Markov process on $E\cup\{\Delta\}$, where
        $\Delta$ is the trap of $X$, with $(P_t)$ as its transition semigroup and
$\zeta$ as its  \emph{life time}. 
 Assume that $(P_t)_{t\geq 0}$ satisfies the following hypothesis.
\begin{hypothesis}\label{HYP3}
    The semigroup $(P_t)_{t\geq 0}$ acts as a strongly continuous contraction semigroup on $L^2(E,m)$.
\end{hypothesis}
Note that \text{Hypothesis \ref{HYP3}} is not trivial because $m$ may not be excessive.
 The infinitesimal generator of $(P_t)_{t\geq 0}$ is the densely defined operator $L$ given by
\[
  Lf:=\lim_{t\rightarrow 0}(P_tf-f)/t,
  \]
 with the domain $D(L)$ being the class of $f\in L^2(E,m)$ for which the indicated limit exists in the strong sense in $L^2(E,m)$. The process $X$ is said to satisfy the \emph{sector condition} if the following hypothesis holds.
 \begin{hypothesis}[Sector condition]\label{HYP1}
  There is a constant $K\geq 1$ such that the bilinear form
  \[
    \mathcal{E}(f, g):=(f, -Lg),\quad f,g\in D(L)
  \]
 satisfies
  \[
    |\mathcal{E}(f,g)|\leq K\cdot (f,(I-L)f)^{\frac{1}{2}}\cdot (g,(I-L)g)^{\frac{1}{2}}
  \]
 for any $f,g\in D(L)$, where $I$ is the identity.
  \end{hypothesis}
Under the sector condition $(\mathcal{E},D(L))$ can be extended to a semi-Dirichlet form (see the appendix) denoted by $(\mathcal{E},\mathcal{F})$ and $D(L)$ is $\tilde{\mathcal{E}}_1$-dense in $\mathcal{F}$. Moreover in  \cite{PJF} the author proved that under a mild assumption ($E$ should be a metrizable co-Souslin space, see HYPOTHESIS2.1 of  \cite{PJF}) a right Markov process for which the sector condition holds is necessarily $m$-\emph{standard}, $m$-\emph{special} and $m$-\emph{tight}. In particular its associated semi-Dirichlet form $(\mathcal{E},\mathcal{F})$ is \emph{quasi-regular} and $X$ is properly associated with $(\mathcal{E},\mathcal{F})$. It is well known that (see  \cite{MOR}) similar to the classical case, every quasi-regular semi-Dirichlet form on $L^2(E,m)$ always corresponds to an $m$-tight special standard process. However the state space $E$ of a right Markov process is usually assumed
to be a metrizable Radon space. The required assumption, say HYPOTHESIS2.1 of  \cite{PJF}, can be replaced by the following hypothesis. Note that here we assume that $E$ is separable whereas  \cite{PJF} does not.

  \begin{hypothesis}\label{HYP5}
    The space $E$ is a separable metrizable Radon space, and there is an increasing sequence $\{K_n\}_{n\geq 1}$ of compact subsets of $E$ such that
    \begin{equation}\label{MT}
    P^m(\lim_{n\rightarrow \infty} T_{E\setminus K_n}<\zeta )=0,
    \end{equation}
     where $T_B:=\inf\{t>0:X_t\in B\}$ for any Borel subset $B$ of $E$.
  \end{hypothesis}

 Throughout this paper we always assume that $X$ is a right Markov process on $E$ satisfying Hypothesis~\ref{HYP3}, \ref{HYP1}, \ref{HYP5} whose semi-Dirichlet form $(\mathcal{E},\mathcal{F})$ is quasi-regular. Denote the \emph{
 semigroup, co-semigroup} and \emph{resolvent, co-resolvent} (see the appendix) of $(\mathcal{E,F})$ by  $(T_t)_{t\geq0}$, $(\hat{T}_t)_{t\geq0}$ and $(G_\alpha)_{\alpha\geq 0}$, $(\hat{G}_\alpha)_{\alpha\geq0}$ respectively. All the other necessary notations and terminologies are given in the appendix.

To formulate Feynman-Kac formula, we need first establish the Revuz correspondence theory, which
was first done by Revuz in \cite{DR} and \cite{DR2} for \emph{positive continuous additive functionals}
(abbreviated as PCAF) under the duality assumption. Then the similar correspondence results relative to the general positive additive functionals (not necessarily to be continuous) and the multiplicative functionals under the duality assumption are formulated in  \cite{FG}, \cite{RGK2}, \cite{RGK3}, \cite{SMJ} and \cite{SMJ2}. The corresponding theory for PCAF's
in symmetric case was
developed by Fukushima in \cite{CM} and \cite{FU}. The main result is that each PCAF is in one-to-one
correspondence with a \emph{smooth} measure (also named by Revuz measure).
To discuss the killing transform by a discontinuous multiplicative functional, we have to
use the bivariate Revuz measures, which were first
introduced by Sharpe in \cite{SMJ} in dual case and
further discussed by the second author of this article in \cite{JGY} and \cite{JGY3}.

In this paper we shall consider the similar correspondence in the context of the semi-Dirichlet forms. The main difficulty is that the reference measure $m$ is not necessarily excessive for $X$.
However for any \emph{co-excessive function} $h$, the measure $h\cdot m$ is always excessive with respect to $X$
no matter $m$ is or is not excessive. The co-excessive functions are rich enough so that
it is possible for us to deal with the correspondence theory for semi-Dirichlet forms
similarly to the cases with duality assumption.
Actually the correspondence between the PCAFs and the  smooth measures was given in \cite{PJF}, \cite{MMS} and \cite{OY} in the context of the semi-Dirichlet forms.
Using the correspondence theory on PCAF's we shall treat general additive functionals and define
their bivariate Revuz measures on the semi-Dirichlet forms. 

The paper is organized as follows. In \S\ref{THWD} we shall focus on the transient case.
Although $X$ may not be transient we can consider the $1$-subprocess of $X$ which has the same properties as $X$ for the Revuz correspondence, see Proposition~\ref{PROPAM}.
Then under the transient assumption we can define the bivariate Revuz measures of the general additive functionals with respect to the reference measure in the context of the semi-Dirichlet forms.
As outlined in Theorem~\ref{EXIS} and Proposition~\ref{UNIQUE} such bivariate Revuz measure is unique. We shall also give some examples to characterize the bivariate Revuz measures of some typical additive functionals such as the Stieltjes logarithm of the multiplicative functional in \S\ref{EXAM}.

In \S\ref{FKF} we shall characterize the killing transform of the semi-Dirichlet forms. The killing transform of $X$ by a multiplicative functional $M$ is introduced in Appendix~\ref{DEFMF}.
We shall prove in Lemma~\ref{PQC} and \ref{HYLM4} that the resulting subprocess $X^M$ still satisfies Hypothesis~\ref{HYP3} and \ref{HYP5}. For the sector condition, i.e. Hypothesis~\ref{HYP1}, it will be more complicated.
In Theorem~\ref{FKT}, \ref{THMLX} and \ref{GCT} we shall give a sufficient condition to ensure that $X^M$ still satisfies the sector condition
and this sufficient condition is verified for the typical jump-type semi-Dirichlet forms (see Example~\ref{EMAP}) and all multidimensional diffusion processes with jumps outlined in \cite{UT} (see Example~\ref{EXA2}).
In particular we can use the bivariate Revuz measure of the stieltjes logarithm of $M$ to characterize the associated semi-Dirichlet form of $X^M$.
At last we shall extend the results in~\cite{JGY3} to the semi-Dirichlet forms in \S\ref{KAS}, which states that the killing transform in Markov processes is equivalent to the subordination in Dirichlet forms, see Theorem~\ref{EMSS}.

\section{Transience and weak duality}\label{THWD}




It is well known that even if $X$ is not transient, its subprocess $X^\delta$ killed by the MF $(e^{-\delta t})_{t\geq 0}$ is transient for arbitrary fixed constant
$\delta>0$. Clearly we have $$X^\delta=(\Omega, \mathcal{M},(\mathcal{M}_t)_{t\geq 0}, X_t, \theta_t, P^x_\delta)$$ where $P^x_\delta$ is defined by \eqref{POS} with $M=(\text{e}^{-\delta t})_{t\geq 0}$. 
In this section we shall illustrate that there is no difference between $X$ and $X^\delta$ in the context of the Revuz correspondence.

For the notation and terminology related to MF's and AF's, refer to \cite{JGY}. For example
we use $\text{MF}(X)$ to denote the set of exact decreasing MF's of $X$ and
\begin{align*}\text{MF}_+(X)&=\{M\in \text{MF}(X): M_0\equiv 1\};\\
\text{MF}_{++}(X)&=\{M\in \text{MF}(X): M_t>0\ \forall t<\zeta\}.
\end{align*}
The following lemma follows from \eqref{POS}.

\begin{lemma}\label{LFS}
	Let $\Gamma\in \mathcal{M}_t\cap (t<\zeta)$ for some $t\geq 0$. Then $P^x(\Gamma)=0$ if and only if $P_\delta^x(\Gamma)=0$ for any $x\in E$.
\end{lemma}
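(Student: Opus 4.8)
The plan is to unwind the definition of $P^x_\delta$ from \eqref{POS} with $M = (e^{-\delta t})_{t \ge 0}$. By the construction of the killing transform (Appendix~\ref{DEFMF}), for a bounded $\mathcal{M}_t$-measurable functional $Z$ one has $P^x_\delta[Z] = P^x[Z \cdot M_t] = P^x[Z \cdot e^{-\delta t}]$, or more precisely, restricting to the event that the killed process has not yet died, $P^x_\delta[Z; t < \zeta_\delta] = P^x[Z e^{-\delta t}; t < \zeta]$, where $\zeta_\delta$ is the (exponentially killed) lifetime of $X^\delta$. Applying this with $Z = 1_\Gamma$, and using that $\Gamma \subseteq (t < \zeta)$, gives
\[
  P^x_\delta(\Gamma) = P^x_\delta(\Gamma; t < \zeta_\delta) = P^x\bigl[e^{-\delta t} 1_\Gamma\bigr] = e^{-\delta t}\, P^x(\Gamma),
\]
where the first equality uses $\Gamma \in \mathcal{M}_t$ together with $\Gamma \cap (t<\zeta)=\Gamma$ and the fact that on $X^\delta$ the event $(t<\zeta_\delta)$ agrees $P^x_\delta$-a.s. with $(t<\zeta)$ for sets in $\mathcal{M}_t$.

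Since $\delta > 0$ and $t \ge 0$ are fixed, the factor $e^{-\delta t}$ is a strictly positive finite constant, so $P^x_\delta(\Gamma) = e^{-\delta t} P^x(\Gamma)$ vanishes if and only if $P^x(\Gamma)$ vanishes, which is exactly the asserted equivalence, for every $x \in E$.

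The only genuine point requiring care — and the one I would treat as the main obstacle — is the bookkeeping of lifetimes: the sample space $\Omega$ is shared between $X$ and $X^\delta$ (as the statement of $X^\delta$ above makes explicit), but the lifetime $\zeta$ is replaced by the smaller $\zeta_\delta$, so one must check that on the event $\Gamma \subseteq (t<\zeta)$ the two formulations of ``the process is alive at time $t$'' are compatible under the change of measure and that no mass is lost to the killing before time $t$. This is precisely what formula \eqref{POS} is designed to encode, and once it is invoked the computation above is immediate; there is no functional-analytic or potential-theoretic input needed here.
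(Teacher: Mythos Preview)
Your proposal is correct and is essentially the same as the paper's, which simply records that the lemma ``follows from \eqref{POS}'': unwinding \eqref{POS} with $M_s=e^{-\delta s}$ gives
\[
P^x_\delta(\Gamma)=E^x\!\int_0^\infty 1_\Gamma\circ k_s\,\delta e^{-\delta s}\,ds
= E^x\!\int_t^\infty 1_\Gamma\,\delta e^{-\delta s}\,ds
= e^{-\delta t}\,P^x(\Gamma),
\]
since $1_\Gamma\circ k_s=1_\Gamma$ for $s>t$ (because $\Gamma\in\mathcal{M}_t$) and $1_\Gamma\circ k_s=0$ for $s\le t$ (because $\zeta(k_s\omega)\le s\le t$ forces $k_s\omega\notin(t<\zeta)\supseteq\Gamma$). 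One remark: in the paper's realization $X^\delta$ lives on the \emph{same} $(\Omega,\mathcal{M},\zeta)$ with only the measure changed, so your distinction between $\zeta$ and a separate $\zeta_\delta$ --- and the ``main obstacle'' you flag --- does not actually arise; the event $(t<\zeta)$ is the same subset of $\Omega$ under both $P^x$ and $P^x_\delta$, and no extra bookkeeping is needed.
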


Then the following lemma follows directly from the perfect exact regularization outlined in (55.19) and (35.10) of \cite{SMJ2}.

\begin{lemma}\label{MMX}
For multiplicative functionals, it holds that \begin{align*}\text{MF}(X)&= \text{MF}(X^\delta)\\
\text{MF}_+(X)&=\text{MF}_+(X^\delta);\\
\text{MF}_{++}(X)&=\text{MF}_{++}(X^\delta).\end{align*}
Moreover $A\in \text{AF}(X,M)$ if and only if $A\in \text{AF}(X^\delta,M)$.
\end{lemma}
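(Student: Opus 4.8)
The plan is to exploit the fact, already recorded in Lemma~\ref{LFS}, that $X$ and $X^\delta$ have exactly the same null sets on $\mathcal{M}_t\cap(t<\zeta)$, together with the observation that the defining properties of multiplicative and additive functionals are all stated in terms of $P^x$-a.s.\ (equivalently $P^m$-a.s.) identities that only ever involve events living in $\mathcal{M}_t$ for $t<\zeta$. First I would recall the definitions: $M\in\text{MF}(X)$ means $M=(M_t)_{t\ge 0}$ is a right-continuous, $(\mathcal{M}_t)$-adapted, $[0,1]$-valued, decreasing process with $M_0\le 1$, satisfying the multiplicative property $M_{t+s}=M_t\cdot(M_s\circ\theta_t)$ $P^x$-a.s.\ for all $x$, and exactness; crucially $M_t=0$ for $t\ge\zeta$, so the whole content of the definition sits on $\{t<\zeta\}$. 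Since the process $X^\delta$ is realized on the \emph{same} filtered sample space $(\Omega,\mathcal{M},(\mathcal{M}_t),X_t,\theta_t)$ — only the measures $P^x$ are replaced by $P^x_\delta$, via \eqref{POS} with $M=(e^{-\delta t})$ — the candidate process $(M_t)$ and the shift operators are literally the same objects for both $X$ and $X^\delta$. Thus the only thing that can differ is whether a given a.s.\ identity holds, and by Lemma~\ref{LFS} the relevant exceptional sets (which lie in $\mathcal{M}_t\cap(t<\zeta)$) are $P^x$-null iff they are $P^x_\delta$-null. This immediately gives $\text{MF}(X)=\text{MF}(X^\delta)$, and since the normalizations $M_0\equiv 1$ and $M_t>0\ \forall t<\zeta$ are again conditions of this same type, the equalities for $\text{MF}_+$ and $\text{MF}_{++}$ follow as well.

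For the second assertion, I would argue identically: $A\in\text{AF}(X,M)$ means $A$ is a right-continuous increasing $(\mathcal{M}_t)$-adapted process with $A_t=A_\zeta$ for $t\ge\zeta$, satisfying the $M$-additivity relation $A_{t+s}=A_t+M_t\cdot(A_s\circ\theta_t)$ $P^x$-a.s.\ for every $x$ (plus the appropriate exactness/regularization). Every one of these requirements is a $P^x$-a.s.\ statement about events in $\mathcal{M}_t\cap(t<\zeta)$ (the behaviour after $\zeta$ being deterministic), so Lemma~\ref{LFS} transfers it verbatim between $P^x$ and $P^x_\delta$, yielding $\text{AF}(X,M)=\text{AF}(X^\delta,M)$.

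The one genuine subtlety — and the step I expect to require the most care — is the exactness/perfection clause: a priori the regularization of a crude MF or AF depends on the law of the process, since it involves modifying the functional on a $P^m$-evanescent set and taking limits along $P^m$-a.e.\ trajectories. Here I would lean on the perfect exact regularization machinery cited in the excerpt, namely (55.19) and (35.10) of \cite{SMJ2}: because Lemma~\ref{LFS} shows $P^m$ and $P^m_\delta$ have the same evanescent sets among processes supported on $\{t<\zeta\}$, the regularized version produced by that procedure is the same for $X$ and for $X^\delta$, so exactness is preserved in both directions. Once this is in place the lemma is just the bookkeeping described above.
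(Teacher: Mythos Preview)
Your proposal is correct and follows essentially the same approach as the paper: the paper states that the lemma ``follows directly from the perfect exact regularization outlined in (55.19) and (35.10) of \cite{SMJ2}'', having set up Lemma~\ref{LFS} immediately beforehand precisely so that the null-set transfer you describe is available. You have simply spelled out the bookkeeping that the paper leaves implicit, and your identification of the exactness/perfection step as the only genuinely nontrivial point---handled via the same Sharpe references---matches the paper's own emphasis.
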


Let  $A\in\text{AF}(X,M)=\text{AF}(X^\delta, M)$ and $\xi \in \text{Exc}^\beta$ with some constant $\beta\geq 0$. Define the \emph{bivariate potential} $(\mathcal{U}_A^\alpha)_{\alpha\geq 0}$ of $A$ relative to $X$ by
\begin{equation}\label{BPA}
    \mathcal{U}_A^\alpha F(x):=E^x\int_0^\infty e^{-\alpha t}F(X_{t-},X_t)dA_t
\end{equation}
for any $F\in b(\mathcal{B}\times \mathcal{B})_+$.
Since $\xi$ is $\beta$-excessive the mapping
\begin{equation}\label{EQTMF}
    t\mapsto\frac{1}{t}e^{-\beta t}E^\xi \int_0^tF(X_{t-},X_t)dA_t
\end{equation}
is increasing as $t$ decreases. In particular
\[
	(F\ast A)_t:=\int_0^t F(X_{s-},X_s)dA_s,\quad t\geq 0\]
is an additive functional relative to the MF $M$, i.e. $F\ast A\in \text{AF}(M)$. Denote the limitation of \eqref{EQTMF} when $t\downarrow 0$ by $L_A(F)$. Then clearly
\[
    L_A(F)=\lim_{t\rightarrow 0}\frac{1}{t}e^{-\beta t}E^\xi \int_0^tF(X_{t-},X_t)dA_t=\lim_{t\rightarrow 0}\frac{1}{t}E^\xi \int_0^tF(X_{t-},X_t)dA_t
\]
and moreover
\begin{equation}\label{LAF}
    L_A(F)=\lim_{\alpha\rightarrow \infty}\alpha \xi \mathcal{U}_A^\alpha F.
     \end{equation}
Hence there exists the celebrated \emph{bivariate Revuz measure}, denoted by $\nu_A^\xi$, of $A$ on $E\times E$ with respect to $\xi$ such that
\begin{equation}\label{LAF2}
    L_A(F)=\int F(x,y)\nu_A^\xi(dxdy).
\end{equation}
Note that if $A$ is $\text{PCAF}$, $\nu_A^\xi$ is concentrated on the diagonal $d$  and
\[\nu_A^\xi(F)=\displaystyle \lim_{t\rightarrow 0}\frac{1}{t}e^{-\beta t}E^\xi \int_0^tF_D(X_t)dA_t\]
for any positive $F$ where $F_D(x):=F(x,x)$ for any $x\in E$. In other words,
\[	
	\nu_A^\xi(F)= \mu_A^\xi(F_D)
\]
 where $\mu_A^\xi$ is the classical Revuz measure of a PCAF $A$ with respect to $\xi$. Similarly let $\mathcal{U}^\alpha_{\delta,A}$ be the bivariate potential of $A$ with respect to $X^\delta$. Then for any $F\in b\mathcal{B}\times \mathcal{B}_+$ it follows that
\[\begin{aligned}
	\mathcal{U}^\alpha_{\delta,A}F(x)&=P^x_\delta \int_0^\infty \text{e}^{-\alpha t}F(X_{t-},X_t)dA_t   \\
		&=E^x\int_0^\infty [( \int_0^\infty \text{e}^{-\alpha t}F(X_{t-},X_t)dA_t )\circ k_s] d(-\text{e}^{-\delta s})  \\
		&=E^x\int_0^\infty ( \int_0^s \text{e}^{-\alpha t}F(X_{t-},X_t)dA_t ) d(-\text{e}^{-\delta s})  \\
		&=E^x\int_0^\infty \text{e}^{-\alpha t}F(X_{t-},X_t)dA_t \int_t^\infty d(-\text{e}^{-\delta s}) \\
		&=\mathcal{U}^{\alpha+\delta}_AF(x).
\end{aligned}\]
Therefore from \eqref{LAF}, \eqref{LAF2} we can deduce the following proposition.

\begin{proposition}\label{PROPAM}
	Assume $M\in \text{MF}(X)$ or $\text{MF}(X^\delta)$ and $A\in \text{AF}(X,M)$ or  $\text{AF}(X^\delta,M)$. Let $\xi\in \text{Exc}^\alpha(X)\subset\text{Exc}^{\alpha}(X^\delta)$ and $\nu_A^\xi,\nu_{\delta,A}^\xi$ the bivariate Revuz measures of $A$ relative to $X$ and $X^\delta$ respectively. Then $\nu_A^\xi=\nu_{\delta,A}^\xi$.
\end{proposition}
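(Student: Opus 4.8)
The plan is to deduce Proposition~\ref{PROPAM} directly from the identity $\mathcal{U}^\alpha_{\delta,A}F = \mathcal{U}^{\alpha+\delta}_A F$ already established in the displayed computation preceding the statement, together with the characterization \eqref{LAF}--\eqref{LAF2} of the bivariate Revuz measure as a limit of scaled bivariate potentials. Concretely, for the fixed $\xi\in\text{Exc}^\alpha(X)$ one first notes that $\xi\in\text{Exc}^\alpha(X^\delta)$ as well, since $X^\delta$ is obtained from $X$ by killing at rate $\delta$, so the supermedian/excessive property only improves; this is exactly the inclusion $\text{Exc}^\alpha(X)\subset\text{Exc}^{\alpha}(X^\delta)$ asserted in the statement. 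Thus both Revuz measures $\nu_A^\xi$ and $\nu_{\delta,A}^\xi$ are well defined.

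Next I would write out the limit formula \eqref{LAF} in both settings. For $X$ we have $L_A(F)=\lim_{\beta\to\infty}\beta\,\xi\mathcal{U}_A^\beta F$, and the analogue for $X^\delta$ reads $L_{\delta,A}(F)=\lim_{\beta\to\infty}\beta\,\xi\mathcal{U}^\beta_{\delta,A}F$. Substituting $\mathcal{U}^\beta_{\delta,A}F=\mathcal{U}^{\beta+\delta}_A F$ gives
\[
	L_{\delta,A}(F)=\lim_{\beta\to\infty}\beta\,\xi\mathcal{U}^{\beta+\delta}_A F
	=\lim_{\beta\to\infty}\frac{\beta}{\beta+\delta}\,(\beta+\delta)\,\xi\mathcal{U}^{\beta+\delta}_A F
	=\lim_{\gamma\to\infty}\gamma\,\xi\mathcal{U}^{\gamma}_A F=L_A(F),
\]
where $\gamma=\beta+\delta\to\infty$ and the factor $\beta/(\beta+\delta)\to 1$. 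Since both $L_A$ and $L_{\delta,A}$ are represented by measures via \eqref{LAF2}, i.e. $L_A(F)=\int F\,d\nu_A^\xi$ and $L_{\delta,A}(F)=\int F\,d\nu_{\delta,A}^\xi$ for all $F\in b(\mathcal{B}\times\mathcal{B})_+$, the equality $L_A=L_{\delta,A}$ on this class of test functions forces $\nu_A^\xi=\nu_{\delta,A}^\xi$ by the uniqueness of the representing measure (a standard monotone-class argument, which one may also invoke from Proposition~\ref{UNIQUE}).

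The only genuinely delicate point is justifying that the limit defining $L_{\delta,A}$ may indeed be computed along the shifted sequence, i.e. that the monotonicity in \eqref{EQTMF}—which was argued using that $\xi$ is $\beta$-excessive for $X$—transfers to $X^\delta$; but this is immediate because $\xi$ is also $\beta$-excessive (indeed $\alpha$-excessive, hence $\beta$-excessive for $\beta\ge\alpha$) for $X^\delta$, so the same monotone-limit argument applies verbatim and the limit exists. Everything else is the bookkeeping already carried out in the excerpt. Hence I do not anticipate a real obstacle: the proposition is essentially a corollary of the potential identity $\mathcal{U}^\alpha_{\delta,A}F=\mathcal{U}^{\alpha+\delta}_A F$ combined with the scaling invariance of the limit $\lim_{\gamma\to\infty}\gamma\,\xi\mathcal{U}^\gamma_A F$ under the shift $\gamma\mapsto\gamma+\delta$.
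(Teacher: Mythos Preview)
Your proposal is correct and follows exactly the paper's own line: the proposition is stated immediately after the displayed identity $\mathcal{U}^\alpha_{\delta,A}F=\mathcal{U}^{\alpha+\delta}_A F$, and the paper simply says it follows from \eqref{LAF} and \eqref{LAF2}, which is precisely the scaling-limit argument you wrote out. One minor remark: invoking Proposition~\ref{UNIQUE} for the uniqueness of the representing measure is not quite the right reference (that proposition concerns uniqueness of the additive functional given its Revuz measure), but the standard monotone-class argument you mention is all that is needed.
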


Without loss of generality we could always assume that the following transient assumption holds when discussing the Revuz measures or bivariate Revuz measures relative to $X$ and the ($\alpha$-)excessive measure $\xi$.

\begin{hypothesis}[Transience]\label{HYP2}
   There is a strictly positive function $g\in b\mathcal{B}$ such that $Ug$ is everywhere finite where $U$ is the potential kernal of $X$.
\end{hypothesis}

 Since $X$ is transient it follows from  \cite{BLNB} or Theorem~3.3.6 of  \cite{OY} that there exist a q.e. strictly positive q.c. coexcessive function $\hat{g}\in \mathcal{F}_\text{e}$ and an $m$-standard Markov process $\check{X}$ such that $X$ and $\check{X}$ are in weak duality relative to $\hat{g}\cdot m$. Clearly $\hat{m}:=\hat{g}\cdot m\in \text{Exc}$ and it is equivalent to $m$ since $\hat{g}$ is strictly positive. Thus a property holds $P^m$-a.s if and only if it holds $P^{\hat{m}}$-a.s. Moreover since every semipolar set is $m$-polar (equivalently, $\hat{m}$-polar) we have the following lemma.

    \begin{lemma}\label{DECOM}
         Any $M\in \text{MF}_+$ has a decomposition
        \begin{equation}\label{MFD}
            M_t=\prod_{0<s\leq t}(1-\Phi(X_{s-},X_s))\text{exp}\{-\int_0^t a(X_s)dA_s\}1_{[0,J_B)}(t)
        \end{equation}
        where $\Phi\in \mathcal{B}\times \mathcal{B},0\leq \Phi <1$, $\Phi$ vanishes on the diagonal $d$ of $E\times E$, $a\in \mathcal{B}_+$, $A$ is a continuous additive functional of $X$, $B$ is a Borel subset of $E\times E$ which is disjoint from $d$ and $S_M=J_B:=\inf\{t>0: (X_{t-},X_t)\in B\}$.
    \end{lemma}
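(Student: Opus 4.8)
The plan is to reduce the decomposition of a general $M\in\mathrm{MF}_+$ to two separate tasks: extracting the ``terminal time'' part $1_{[0,J_B)}$ coming from the jumps of $M$ to $0$, and decomposing the remaining strictly positive multiplicative functional into its jump product and its continuous exponential factor. Since $X$ is transient with a weak dual $\check X$ relative to $\hat m = \hat g\cdot m$, and since every semipolar set is $\hat m$-polar under this setup, the classical Sharpe-type decomposition theory for exact MF's in weak duality (as developed in \cite{SMJ}, \cite{SMJ2}, and refined in \cite{JGY}) can be invoked essentially verbatim once we have the dual process in hand.

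First I would treat the strictly positive case $M\in\mathrm{MF}_{++}$. By exactness and perfection of $M$ (using the regularization of \cite{SMJ2} already quoted before Lemma~\ref{MMX}), the Stieltjes logarithm $-\log M$ is a well-defined additive functional; its jump part at a discontinuity time $s$ is a function of $(X_{s-},X_s)$ because $M$ is adapted and multiplicative, and one shows this function is of the form $-\log(1-\Phi(X_{s-},X_s))$ with $0\le\Phi<1$ and $\Phi$ vanishing on the diagonal $d$ (the diagonal carries no jumps of a right process). The continuous part of $-\log M$ is a CAF, and here the weak-duality Revuz correspondence — available precisely because $\hat m\in\mathrm{Exc}$ — lets one write it as $\int_0^t a(X_s)\,dA_s$ with $a\in\mathcal B_+$ and $A$ a CAF of $X$ (absorbing the density into $a$; one may even normalize $A$ if desired). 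Exponentiating recovers \eqref{MFD} with $B=\emptyset$, $J_B=\infty$.

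For a general $M\in\mathrm{MF}_+$, let $S_M$ be its lifetime (the first time $M$ hits $0$). The key structural fact is that $S_M$ is an exact terminal time, and for an exact terminal time in the presence of a weak dual one has $S_M=J_B$ for a Borel set $B\subseteq E\times E$ disjoint from the diagonal, where $J_B=\inf\{t>0:(X_{t-},X_t)\in B\}$ — this is the bivariate analogue of the classical ``terminal time $=$ hitting time'' representation, and it is exactly the point where we need the bivariate (rather than univariate) hitting set, since $M$ can be killed by a jump across $B$. Then $M_t\,1_{[0,S_M)}^{-1}$... more precisely, on $[0,S_M)$ the functional $M$ agrees with a strictly positive MF to which the previous paragraph applies, yielding $\Phi$, $a$, $A$; one checks the jumps of $M$ that occur \emph{at} $S_M$ are already accounted for by $1_{[0,J_B)}$, and that the product $\prod_{0<s\le t}(1-\Phi(X_{s-},X_s))$ converges (no accumulation of jumps before $\zeta$, again because semipolar sets are polar).

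The main obstacle I anticipate is not the algebraic form of the decomposition — that is classical — but ensuring every step that in the dual theory relies on $m$ (or an excessive measure) being the reference measure still goes through with $\hat m=\hat g\cdot m$ in its place, and that the resulting $\Phi$, $a$, $A$, $B$ are genuinely Borel and $X$-adapted (not merely defined up to $\hat m$-polar sets) so that \eqref{MFD} holds \emph{identically} as an $\mathrm{MF}_+$ identity and not just $P^{\hat m}$-a.s. This is handled by the exact regularization already cited, together with the remark (made just before the lemma) that $P^m$-a.s.\ and $P^{\hat m}$-a.s.\ coincide and that semipolar $=$ $m$-polar, which upgrades the a.e.\ identities from duality theory to the pointwise/perfect statements needed here.
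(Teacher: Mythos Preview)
Your proposal is correct and follows essentially the same route as the paper: invoke the classical Sharpe--Ying decomposition (Theorem~7.1 of \cite{SMJ}, Theorem~2.2 of \cite{JGY}) for exact MF's under weak duality, using the weak dual $\check X$ relative to $\hat m=\hat g\cdot m$ together with the fact that semipolar sets are $\hat m$-polar, and then transfer the conclusion from $P^{\hat m}$-a.s.\ to $P^m$-a.s.\ via equivalence of $m$ and $\hat m$. The paper simply cites these results in one line rather than sketching their content as you do; your final worry about upgrading to a pointwise or perfect identity is unnecessary here, since the convention throughout (see Appendix~\ref{DEFMF}) is that all MF identities are understood $P^m$-a.s., and $P^{\hat m}$-a.s.\ already gives that.
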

    \begin{proof}
       Since $X$ is in weak duality to $\check{X}$ relative to $\hat{m}$ and every semipolar set is $\hat{m}$-polar it follows from \text{Theorem 2.2} of  \cite{JGY} (also see Theorem~7.1 of  \cite{SMJ}) that \eqref{MFD} holds $P^{\hat{m}}$-a.s. Hence it also holds in the sense of $P^m$-a.s.  
    \end{proof}

    \begin{corollary}\label{SLD}
        If $M\in \text{MF}_+$ has the decomposition \eqref{MFD}, then the Stieltjes logarithm of $M$, denoted by $[M]$, is
        \begin{equation}\label{BMD3}
            [M]_t=\sum_{s\leq t}\Phi(X_{s-},X_s)1_{\{s<S_M\}}+\int_0^t1_{\{s<S_M\}}a(X_s)dA_s.
        \end{equation}
    \end{corollary}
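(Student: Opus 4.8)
The plan is to read the Stieltjes logarithm off the explicit product representation \eqref{MFD} by a short integration-by-parts computation separating the continuous from the purely discontinuous part. Recall (see \cite{JGY}, \cite{SMJ}) that the Stieltjes logarithm of $M\in\text{MF}_+$ is the additive functional $[M]$ determined by
\[
	[M]_t=-\int_{(0,t]}\mathbf{1}_{\{s<S_M\}}\,\frac{dM_s}{M_{s-}},
\]
or, equivalently, by the requirement that $M$ be the Stieltjes exponential of $-[M]$ killed at $S_M$; in particular the killing carried at the death time is \emph{not} recorded as a jump of $[M]$. This is meaningful because the decomposition \eqref{MFD} forces $M_s>0$ for every $s<S_M=J_B$ (otherwise the death time of $M$ would be strictly less than $J_B$), so $M_{s-}>0$ on $(0,S_M)$.

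First I would observe that on the stochastic interval $[0,S_M)$, where the factor $\mathbf{1}_{[0,J_B)}$ is identically $1$, \eqref{MFD} reads $M_t=K_tC_t$ with
\[
	C_t:=\exp\{-\int_0^t a(X_s)\,dA_s\},\qquad K_t:=\prod_{0<s\le t}\bigl(1-\Phi(X_{s-},X_s)\bigr).
\]
Here $C$ is continuous, of finite variation and strictly positive, since $A$ is a continuous additive functional and $a\in\mathcal{B}_+$, with $dC_s/C_s=-a(X_s)\,dA_s$; while $K$ is a purely discontinuous decreasing process, strictly positive on $[0,S_M)$, whose only jumps occur at the at most countably many times $s<S_M$ at which $\Phi(X_{s-},X_s)>0$, where $\Delta K_s=-K_{s-}\Phi(X_{s-},X_s)$, so that $\int_{(0,t]}dK_s/K_{s-}=-\sum_{0<s\le t}\Phi(X_{s-},X_s)$. (Here we use that $\Phi$ vanishes on the diagonal, so that the product and the sum run effectively over jump times of $X$.)

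Next I would apply the Stieltjes product rule $dM_s=K_{s-}\,dC_s+C_s\,dK_s$ (the cross-variation term vanishes because $C$ is continuous, and then $C_{s-}=C_s$). Since $M_{s-}=K_{s-}C_s$ this gives $dM_s/M_{s-}=dC_s/C_s+dK_s/K_{s-}$, and integrating against $\mathbf{1}_{\{s<S_M\}}$ while changing sign yields precisely \eqref{BMD3}. It then remains only to check that the right-hand side of \eqref{BMD3} genuinely defines an element of $\text{AF}(M)$: its first term is a continuous additive functional relative to $M$, and the second, $s\mapsto\sum_{r\le s}\mathbf{1}_{\{r<S_M\}}\Phi(X_{r-},X_r)$, is immediately additive relative to $M$ and is measurable since $\Phi\in\mathcal{B}\times\mathcal{B}$. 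Uniqueness of the Stieltjes logarithm then identifies this functional with $[M]$. Alternatively, one may simply verify that the Stieltjes exponential of minus the right-hand side of \eqref{BMD3}, killed at $S_M$, equals $M$, again using continuity of $A$ and that $\Phi$ vanishes on the diagonal.

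The computation itself is routine; the only point that needs attention is the bookkeeping at the death time $S_M$, namely that the convention for $[M]$ (the indicator $\mathbf{1}_{\{s<S_M\}}$, equivalently freezing $[M]$ at $[M]_{S_M-}$ on $[S_M,\infty)$) is respected throughout, and that in the degenerate case $M_{S_M-}=0$ formula \eqref{BMD3} correctly returns $[M]_{S_M-}=+\infty$.
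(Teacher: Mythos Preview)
Your proposal is correct and is precisely the computation one would carry out to justify this corollary; the paper itself states it without proof, treating it as an immediate consequence of the decomposition \eqref{MFD} and the definition of the Stieltjes logarithm given in the appendix. One small terminological slip: you write that the right-hand side of \eqref{BMD3} lies in $\text{AF}(M)$, but the paper (and \cite{JGY}) records $[M]$ as an $S_M$-additive functional, i.e.\ an element of $\text{AF}(S_M)=\text{AF}(1_{[0,S_M)})$, not $\text{AF}(M)$; this does not affect your argument.
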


\section{Bivariate Revuz measure}\label{BRM2}

Throughout this section let $X$ be a right Markov process on $E\cup \{\Delta\}$ such that Hypothesis \ref{HYP3}, \ref{HYP1},  \ref{HYP5}  and \ref{HYP2} hold and its associated semi-Dirichlet form $(\mathcal{E},\mathcal{F})$ on $L^2(E,m)$ is quasi-regular.

\subsection{Existence and uniqueness}

Fix a multiplicative functional $M\in\text{MF}$. First  we have the following definition.

\begin{definition}
   An additive functional $A\in \text{AF}(M)$ is said to be \emph{integrable} if $\nu_A^{\hat{m}}(1)<\infty$ and \emph{$\sigma$-integrable} if we can write $E\times E=\cup_{i=1}^\infty F_i,\, F_i\in \mathcal{B}\times \mathcal{B}$ such that $\nu_A^{\hat{m}}(1_{F_i})<\infty$ for each $i$.
\end{definition}

The following theorem is our main result about the existence of the bivariate Revuz measure of $A\in \text{AF}(M)$ relative to $m$. Note that $F(x,y):=0$ if either $x=\Delta$ or $y=\Delta$.

\begin{theorem}\label{EXIS}
    Let $M\in \text{MF}$ and $A\in \text{AF}(M)$. Then there exists a unique positive measure $\nu_A$ on $E\times E$ charging no $m$-bipolar sets such that
    \begin{equation}\label{BRM}
    \begin{aligned}
        \int F(x,y)\tilde{\hat{h}}(x)\nu_A(dxdy)&=\lim_{t\downarrow 0}\frac{1}{t}E^{\hat{h}\cdot m}\int_0^tF(X_{s-},X_{s})dA_s\\&=\lim_{\alpha\uparrow \infty}\alpha(\hat{h},\mathcal{U}^\alpha_AF)_m
        \end{aligned}
    \end{equation}
    for any strictly positive $\gamma$-coexcessive function $\hat{h}$ with some constant $\gamma\geq 0$ and $F\in (\mathcal{B}\times \mathcal{B})_+$ where $\tilde{\hat{h}}$ is the q.c. $m$-version of $\hat{h}$ defined in \text{Remark \ref{IFW}}. In particular $A$ is $\sigma$-integrable if and only if $\nu_A$ is $\sigma$-finite. When $A$ is $\sigma$-integrable, \eqref{BRM} holds for any $\gamma$-coexcessive function $\hat{h}$ with some constant $\gamma\geq 0$ which is not necessarily strictly positive.
\end{theorem}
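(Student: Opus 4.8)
The plan is to reduce the bivariate statement to the already-established correspondence theory for PCAF's on semi-Dirichlet forms (as developed in \cite{PJF}, \cite{MMS}, \cite{OY}) by using the map $A \mapsto F\ast A$ and the weak-duality partner $\check{X}$ relative to $\hat{m}=\hat{g}\cdot m$ provided by Hypothesis~\ref{HYP2}. First I would fix a strictly positive q.c.\ coexcessive $\hat{g}\in\mathcal{F}_{\mathrm{e}}$ as above, so that $\hat{m}\in\mathrm{Exc}$ is equivalent to $m$ and $X$ is in weak duality with $\check{X}$ relative to $\hat{m}$. Since $\hat{m}$ \emph{is} an excessive measure, the classical bivariate Revuz correspondence under weak duality (Theorem~2.2 and the surrounding material of \cite{JGY}, cf.\ \cite{SMJ}) applies verbatim: for $A\in\mathrm{AF}(X,M)$ there is a unique measure $\nu_A^{\hat{m}}$ on $E\times E$ with
\[
\int F(x,y)\,\nu_A^{\hat{m}}(dxdy)=\lim_{t\downarrow 0}\frac1t\,E^{\hat{m}}\!\int_0^t F(X_{s-},X_s)\,dA_s=\lim_{\alpha\uparrow\infty}\alpha\,\hat{m}\,\mathcal{U}_A^\alpha F,
\]
and $\nu_A^{\hat{m}}$ charges no $\hat{m}$-bipolar (equivalently $m$-bipolar) set. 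This gives a candidate: define $\nu_A$ by $\nu_A(dxdy):=\tilde{\hat{g}}(x)^{-1}\,\nu_A^{\hat{m}}(dxdy)$, where $\tilde{\hat{g}}$ is the q.c.\ $m$-version of $\hat{g}$; since $\hat{g}>0$ q.e., this is a well-defined positive measure charging no $m$-bipolar set, and it does not depend on the choice of $\hat{g}$ because the right-hand side of \eqref{BRM} does not.

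Next I would verify \eqref{BRM} for a general strictly positive $\gamma$-coexcessive $\hat{h}$. The idea is that for fixed positive $F$, the measure $B\mapsto \nu_{1_B\ast A}^{\hat{m}}$ restricted appropriately, or rather the PCAF $F\ast A$, satisfies the classical Revuz formula with respect to every $\gamma$-coexcessive function, not just the reference one: indeed $F\ast A\in\mathrm{AF}(M)$ is (after the further killing encoded in $M$) effectively a PCAF of the killed process, and the PCAF Revuz correspondence for semi-Dirichlet forms already delivers
\[
\lim_{t\downarrow 0}\frac1t\,E^{\hat{h}\cdot m}\!\int_0^t F(X_{s-},X_s)\,dA_s=\int \tilde{\hat{h}}(x)\,\mu_{F\ast A}^{m}(dx)
\]
for any $\gamma$-coexcessive $\hat{h}$, where $\mu_{F\ast A}^m$ is the (univariate) smooth Revuz measure of $F\ast A$ w.r.t.\ $m$. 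Comparing with the $\hat{h}=\hat{g}$ case identifies $\int \tilde{\hat{h}}(x)\,\mu^m_{F\ast A}(dx)=\int F(x,y)\tilde{\hat{h}}(x)\,\nu_A(dxdy)$, first for $\hat{h}=\hat{g}$ by construction, then for general strictly positive $\gamma$-coexcessive $\hat{h}$ since both sides are of the form $\int\tilde{\hat{h}}\,d(\text{fixed measure})$. The second equality of \eqref{BRM} (the $\alpha$-resolvent form) then follows from \eqref{LAF}--\eqref{LAF2} and the monotonicity in \eqref{EQTMF}, exactly as in the excerpt, applied with $\xi=\hat{h}\cdot m$.

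For the characterisation of $\sigma$-integrability: if $\nu_A$ is $\sigma$-finite, choose $F_i$ with $\tilde{\hat{g}}$ bounded below on the relevant sets and $\nu_A(1_{F_i})<\infty$; conversely if $A$ is $\sigma$-integrable with the $F_i$ as in the definition, then $\nu_A^{\hat{m}}(1_{F_i})<\infty$ forces $\nu_A(1_{F_i}\cap\{\tilde{\hat g}\ge 1/n\})<\infty$, and refining the partition by the level sets $\{1/(n+1)\le\tilde{\hat g}<1/n\}$ together with $\{\tilde{\hat g}=0\}$ (which is $m$-polar, hence $\nu_A$-null) exhibits $\nu_A$ as $\sigma$-finite. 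Finally, when $A$ is $\sigma$-integrable, extending \eqref{BRM} to an arbitrary (not necessarily strictly positive) $\gamma$-coexcessive $\hat{h}$ is done by the standard device of writing $\hat{h}=\lim_n(\hat{h}+\tfrac1n\hat{g})$, applying \eqref{BRM} to the strictly positive $\hat{h}+\tfrac1n\hat{g}$, and passing to the limit by monotone convergence on each piece $F_i$ of the $\sigma$-finite decomposition; $\sigma$-finiteness of $\nu_A$ is exactly what makes this limit exchange legitimate. Uniqueness is immediate: any two measures satisfying \eqref{BRM} agree when tested against $F\cdot\tilde{\hat g}$ for all $F\in(\mathcal{B}\times\mathcal{B})_+$, hence agree off the $m$-polar set $\{\tilde{\hat g}=0\}$, and both charge no $m$-bipolar set.

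The main obstacle I expect is the second sentence of the middle paragraph: rigorously transferring the PCAF Revuz identity from the reference coexcessive function to an \emph{arbitrary} $\gamma$-coexcessive $\hat h$. Under genuine duality this is classical, but here one only has weak duality relative to the \emph{one} excessive measure $\hat m$, so one must argue that the limit defining $\mu^m_{F\ast A}$ is insensitive to which coexcessive weight is used — this is where the quasi-regularity, the $m$-tightness/$m$-specialness of $X$, and the q.c.\ $m$-version machinery of Remark~\ref{IFW} have to be invoked carefully, and it is the step most likely to need a separate lemma rather than a routine citation.
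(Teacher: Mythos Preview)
Your overall architecture matches the paper's: construct $\nu_A$ from the weak-duality bivariate Revuz measure $\nu_A^{\hat m}$ by dividing out $\tilde{\hat g}$, then verify \eqref{BRM} for a general strictly positive $\gamma$-coexcessive $\hat h$, then handle $\sigma$-integrability and the extension to non-strictly-positive $\hat h$ via $\hat h+\epsilon\hat g$. The last two steps and the uniqueness argument are essentially identical to the paper's.

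The genuine gap is exactly where you flag it, but it is worse than needing a separate lemma. Your proposed reduction writes
\[
\lim_{t\downarrow 0}\frac1t\,E^{\hat h\cdot m}\!\int_0^t F(X_{s-},X_s)\,dA_s=\int \tilde{\hat h}(x)\,\mu_{F\ast A}^{m}(dx)
\]
by invoking the PCAF Revuz correspondence \eqref{RFC}. But $F\ast A$ is \emph{not} a PCAF: $A\in\mathrm{AF}(M)$ may be discontinuous, and even when $A$ is continuous, $F\ast A$ jumps whenever $X$ does and $F$ is nonzero off the diagonal. So \eqref{RFC} is inapplicable and there is no pre-existing smooth measure $\mu_{F\ast A}^m$ to cite. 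The independence-of-$\hat h$ statement you need for the AF $F\ast A$ is precisely the content of Theorem~\ref{EXIS} itself, so the argument is circular.

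The paper closes this gap by a direct estimate rather than a reduction. Given two strictly positive $\gamma$-coexcessive $\hat h_1,\hat h_2$ with candidate measures $\nu_A^1,\nu_A^2$, it discretizes $[0,t]$ into steps $d_n=t/n$, uses additivity of $F\ast A$ and the Markov property to rewrite
\[
E^{\hat h_1\cdot m}\!\int_0^t e^{-\gamma s}\,(\hat h_2/\hat h_1)(X_{s-})\,d(F\ast A)_s
\]
as a limit of sums, applies Fatou, and then uses the $\gamma$-coexcessivity of $\hat h_1$ (through $e^{-\gamma kd_n}\hat T_{kd_n}\hat h_1\le\hat h_1$, approximated on the sets $D_j$ where everything is bounded) to dominate each term by $E^{\hat h_2\cdot m}(F\ast A)_{d_n}$. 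This yields $\int F\hat h_2\,d\nu_A^1\le\int F\hat h_2\,d\nu_A^2$, and by symmetry equality. This Fatou/discretization comparison is the substantive new ingredient and does not factor through the continuous case.
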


\begin{proof}
    Fix a $\gamma$-coexcessive function $\hat{h}$ with some constant $\gamma\geq 0$.  Without loss of generality we assume that $\hat{h}$ is quasi-continuous. Then clearly $\hat{h}\cdot m$ is a $\gamma$-excessive measure relative to $X$. It follows that (see II.1 of  \cite{DR} or  \cite{RGK3}) the mapping
    \[
    	t\mapsto\frac{1}{t}\text{e}^{-\gamma t}E^{\hat{h}\cdot m}\int_0^tF(X_{s-},X_{s})dA_s
    	\]
     is increasing as $t\downarrow 0$ and the mapping
     \[
     \alpha\mapsto\alpha(\hat{h},\mathcal{U}^{\alpha+\gamma}_AF)_m\]
      is increasing as $\alpha\rightarrow \infty$. Moreover their limitations are equal (may be infinite) and we can deduce that
    \[\begin{aligned}
        \lim_{t\downarrow 0}\frac{1}{t}&E^{\hat{h}\cdot m}\int_0^tF(X_{s-},X_{s})dA_s\\=&\lim_{t\downarrow 0}\frac{1}{t}\text{e}^{-\gamma t}E^{\hat{h}\cdot m}\int_0^tF(X_{s-},X_{s})dA_s \\
                =&\lim_{\alpha\rightarrow \infty}\alpha(\hat{h},\mathcal{U}^{\alpha+\gamma}_AF)_m   \\
                        =&\lim_{\alpha\rightarrow \infty}\alpha(\hat{h},\mathcal{U}^\alpha_AF)_m.
    \end{aligned}\]
    Hence the second equality in \eqref{BRM} holds.
To prove the first equality of \eqref{BRM} we first  assume that $\hat{h}$ is q.e. strictly positive. For any $F\in (\mathcal{B}\times \mathcal{B})_+$, define
    \begin{equation}\label{LF}
        L(F):=\lim_{\alpha\rightarrow \infty}\alpha(\hat{h},\mathcal{U}_A^{\alpha+\gamma}(F/\hat{h}))_m.
    \end{equation}
Note that $F/\hat{h}$ is q.e. positive. Since the value in the right side of  \eqref{LF} is increasing as $\alpha\uparrow \infty$, it follows from the monotone convergence theorem that
    \[
        L(\sum_{n=1}^\infty F^n)=\sum_{n=1}^\infty L(F^n)
    \]
    for any $F^n\in (\mathcal{B}\times \mathcal{B})_+,\;n\geq 1$. Obviously $L(0)=0$. Therefore there exists a positive measure denoted by $\nu_A$ on $E\times E$ such that
    \begin{equation}\label{EQLFI}
        L(F)=\int F(x,y)\nu_A(dxdy).
    \end{equation}
Replacing $F$ by $F(x,y)\hat{h}(x)$ in \eqref{EQLFI} we can deduce that
    \[
    \int F(x,y)\hat{h}(x)\nu_A(dxdy)=\lim_{\alpha\rightarrow \infty}\alpha(\hat{h},\mathcal{U}_A^{\alpha+\gamma}(F))_m.
    \]
 We claim that $\nu_A$ is independent of the choice of $\hat{h}$. In fact let $\hat{h}_1,\hat{h}_2$ be two $\gamma$-coexcessive q.e. strictly positive functions in $\mathcal{F}_\text{e}$ and $\nu_A^1,\nu_A^2$ the corresponding measures satisfying \eqref{BRM}. Note that $\hat{h}_i$ should be $\gamma_i$-coexcessive for $i=1,2$ with two constants $\gamma_1,\gamma_2$ such that $\gamma_1\leq \gamma_2$. But it follows that $\hat{h}_1$ is also $\gamma_2$-coexcessive. Hence $\hat{h}_1,\hat{h}_2$ are both $\gamma$-coexcessive for $\gamma=\gamma_2$. Take $F\in b(\mathcal{B}\times \mathcal{B})_+$ and without loss of generality we can assume that
 \[
 	\int F(x,y)\hat{h}_2(x)\nu^2_A(dxdy)<\infty.
 	\]
 	Otherwise \eqref{FAH} always holds.
  Then for any $t>0$ it follows that
 \[
 	E^{\hat{h}_2\cdot m}\int_0^t F(X_{s-},X_s)dA_s\leq t\int F(x,y)\hat{h}_2(x)\nu^2_A(dxdy)<\infty
\] 	and hence
    \[
        f_t(x):=E^x\int_0^t F(X_{s-},X_s)dA_s<\infty\quad m\text{-a.e. }x.
    \]
On the other hand,
\[\begin{aligned}
E^{\hat{h}_1\cdot m}&\int_0^t \text{e}^{-\gamma t}(F\cdot \hat{h}_2/\hat{h}_1)(X_{s-},X_s)dA_s \\=&E^{\hat{h}_1\cdot m}\int_0^t \text{e}^{-\gamma t}(\hat{h}_2/\hat{h}_1)(X_{s-})d(F\ast A)_s \\
                    =& E^{\hat{h}_1\cdot m}\lim_{n\rightarrow \infty}\sum_{k=1}^{n} \text{e}^{-\gamma kd_n}(\hat{h}_2/\hat{h}_1)(X_{(k-1)d_n})((F\ast A)_{kd_n}-(F\ast A)_{(k-1)d_n})
\end{aligned}\]
where $d_n=t/n$.
  It follows from Fatou Lemma and the Markov property of $X$ that
    \[\begin{aligned}
         E^{\hat{h}_1\cdot m}&\int_0^t \text{e}^{-\gamma t}(F\cdot \hat{h}_2/\hat{h}_1)(X_{s-},X_s)dA_s\\
                    \leq& \liminf_{n\rightarrow \infty}\sum_{k=0}^{n-1} E^{\hat{h}_1\cdot m} [\text{e}^{-\gamma kd_n}\frac{\hat{h}_2}{\hat{h}_1}(X_{kd_n})((F\ast A)_{(k+1)d_n}-(F\ast A)_{kd_n})]  \\
                    =&\liminf_{n\rightarrow \infty}\sum_{k=0}^{n-1} E^{\hat{h}_1\cdot m}[\text{e}^{-\gamma kd_n}\frac{\hat{h}_2}{\hat{h}_1}(X_{kd_n})(M_{kd_n} (F\ast A)_{d_n}\circ \theta_{kd_n})] \\
                    \leq &\liminf_{n\rightarrow \infty}\sum_{k=0}^{n-1} E^{\hat{h}_1\cdot m}[ \text{e}^{-\gamma kd_n}(\hat{h}_2/\hat{h}_1)(X_{kd_n})((F\ast A)_{d_n}\circ \theta_{kd_n})]   \\
                    = &\liminf_{n\rightarrow \infty}\sum_{k=0}^{n-1} E^{\hat{h}_1\cdot m}[ \text{e}^{-\gamma kd_n}(\hat{h}_2/\hat{h}_1)(X_{kd_n})E^{X_{kd_n}}(F\ast A)_{d_n}].
    \end{aligned}\]
    Define
    \[D_j:=\{f_{d_n}<j\}\cap \{\frac{1}{j}\leq \hat{h}_1\leq j\}\cap \{\frac{1}{j}\leq \hat{h}_2\leq j\}.\]
    Clearly $D_j\uparrow E\;m$-a.e. as $j\rightarrow \infty$ and $(f_{d_n}\cdot \hat{h}_2/\hat{h}_1)\cdot 1_{D_j}\in bL^2(E,m)$. Since $\hat{h}_1$ is $\gamma$-coexcessive it follows that
    \[\begin{aligned}
        E^{\hat{h}_1\cdot m}&[ \text{e}^{-\gamma kd_n}(\hat{h}_2/\hat{h}_1)(X_{kd_n})E^{X_{kd_n}}(F\ast A)_{d_n}]\\=&E^{\hat{h}_1\cdot m}[ \text{e}^{-\gamma kd_n}(f_{d_n}\hat{h}_2/\hat{h}_1)(X_{kd_n})] \\
        =& \lim_{j\rightarrow \infty}E^{\hat{h}_1\cdot m}[ \text{e}^{-\gamma kd_n}(f_{d_n}\hat{h}_2/\hat{h}_1\cdot 1_{D_j})(X_{kd_n})] \\=& \lim_{j\rightarrow \infty}(\text{e}^{-\gamma kd_n}\hat{h}_1, P_{kd_n}(f_{d_n}\hat{h}_2/\hat{h}_1\cdot 1_{D_j}))_m \\
        =& \lim_{j\rightarrow \infty}(\text{e}^{-\gamma kd_n}\hat{h}_1, T_{kd_n}(f_{d_n}\hat{h}_2/\hat{h}_1\cdot 1_{D_j}))_m  \\
        =& \lim_{j\rightarrow \infty}(\text{e}^{-\gamma kd_n}\hat{T}_{kd_n}\hat{h}_1, f_{d_n}\hat{h}_2/\hat{h}_1\cdot 1_{D_j})_m \\
        \leq & (\hat{h}_1,  f_{d_n}\hat{h}_2/\hat{h}_1)_m \\
        =&E^{\hat{h}_2\cdot m}(F\ast A)_{d_n}.
    \end{aligned}\]
    Thus we have
    \[\begin{aligned}
        E^{\hat{h}_1\cdot m}\int_0^t \text{e}^{-\gamma t}(F\cdot \hat{h}_2/\hat{h}_1)(X_{s-},X_s)dA_s
            &\leq \liminf_{n\rightarrow \infty}nE^{\hat{h}_2\cdot m}(F\ast A)_{d_n} \\
            &=t\liminf_{n\rightarrow \infty}\frac{1}{d_n}E^{\hat{h}_2\cdot m}(F\ast A)_{d_n}.
    \end{aligned}
    \]
    In other words,
    \[
        \frac{1}{t}E^{\hat{h}_1\cdot m}\int_0^t \text{e}^{-\gamma t}(F\cdot \hat{h}_2/\hat{h}_1)(X_{s-},X_s)dA_s\leq \lim_{s\rightarrow 0}\frac{1}{s}E^{\hat{h}_2\cdot m}(F\ast A)_s.
    \]
    Let $t\downarrow 0$ and we can deduce that
    \begin{equation}\label{FAH}
    \int F(x,y)\hat{h}_2(x)\nu_A^1(dxdy)\leq \int F(x,y)\hat{h}_2(x)\nu_A^2(dxdy).
     \end{equation}
     Similarly we conclude that
     \[
     	\int	F(x,y)\hat{h}_2(x)\nu_A^2(dxdy)\leq \int F(x,y)\hat{h}_2(x)\nu_A^1(dxdy).
     	\] Since $F$ is arbitrary,  we have $\hat{h}_2(x)\nu_A^1(dxdy)=\hat{h}_2(x)\nu_A^2(dxdy)$ and it follows  that  \[\nu_A^1=\nu_A^2.\] In particular the measure $\hat{g}(x)\nu_A(dxdy)=\nu_A^{\hat{m}}$ charges no $m$-bipolar sets. Then $\nu_A$ also charges no $m$-bipolar sets because $\hat{g}$ is strictly positive. The uniqueness of $\nu_A$ which satisfies  \eqref{BRM} is apparent.

     Note that a positive measure is $\sigma$-finite if and only if there  exists a strictly positive and integrable function relative to this measure. Thus if $A$ is $\sigma$-integrable there exists a strictly positive function $F$ such that
     \[
     	\nu_A^{\hat{m}}(F)<\infty.
     \]
On the other hand since
\[
	\int\hat{g}(x)F(x,y)\nu_A(dxdy)=\nu_A^{\hat{m}}(F)
\]
and $\hat{g}$ is also strictly positive we can deduce that $\nu_A$ is $\sigma$-finite. On the contrary we can similarly prove that if $\nu_A$ is $\sigma$-finite then $A$ is $\sigma$-integrable.

    Finally if $A$ is $\sigma$-integrable we assert that \eqref{BRM} holds for any $\gamma$-coexcessive function $\hat{h}$ which is not necessarily strictly positive. To this end define $\hat{h}_\epsilon=\hat{h}+\epsilon \hat{g}$ and clearly $\hat{h}_\epsilon$ is $\gamma$-coexcessive and strictly positive. Choose some function $F\in (\mathcal{B}\times \mathcal{B})_+$ such that $\nu_A^{\hat{m}}(F)=\int F(x,y)\hat{g}(x)\nu_A(dxdy)<\infty$. Since \eqref{BRM} holds for $\hat{h}_\epsilon$ it follows that
    \[\begin{aligned}
       \int F (x,&y)(\hat{h}(x)+\epsilon \hat{g}(x))\nu_A(dxdy)\\=&\lim_{\alpha\rightarrow \infty}\alpha(\hat{h}+\epsilon \hat{g},\mathcal{U}^\alpha_AF)_m \\=&\lim_{\alpha\rightarrow \infty}\alpha(\hat{h},\mathcal{U}^\alpha_AF)_m+\epsilon \lim_{\alpha\rightarrow \infty}\alpha(\hat{g},\mathcal{U}^\alpha_AF)_m.
    \end{aligned}\]
    Let $\epsilon\downarrow 0$ we can deduce that
    \[
    	\int F(x,y)\hat{h}(x)\nu_A(dxdy)=\lim_{\alpha\rightarrow \infty}\alpha(\hat{h},\mathcal{U}^\alpha_AF)_m.
    	\] That completes the proof. 
\end{proof}

\begin{definition}
    Let $M\in \text{MF}$ and $A\in \text{AF}(M)$. A positive measure $\nu_A$ on $E\times E$ is called the \emph{bivariate Revuz measure} of $A$ if \eqref{BRM} holds for any strictly positive $\gamma$-coexcessive function $\hat{h}$, $\gamma\geq 0$ and $F\in (\mathcal{B}\times \mathcal{B})_+$.
\end{definition}

We always  denote the bivariate Revuz measure of $A$ by $\nu_A$. When $A$ is integrable, i.e. $\hat{g}\cdot \nu_A(1)=\nu_A^{\hat{m}}(1)<\infty$, we do not have  $\nu_A(1)<\infty$ whereas $\nu_A$ is always $\sigma$-finite by \text{Theorem \ref{EXIS}}. Moreover we have the following useful corollary.

\begin{corollary}
    Let  $A\in\text{AF}(M)$ be $\sigma$-integrable and $F\in(\mathcal{B}\times \mathcal{B})_+$. Then the additive functional
    \[
    	(F\ast A)_t=\int_0^t F(X_{s-},X_s)dA_s,\quad t\geq 0
    	\]
  is $\sigma$-integrable and $\nu_{F\ast A}=F\cdot \nu_A$.
\end{corollary}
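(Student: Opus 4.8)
The plan is to reduce everything to Theorem~\ref{EXIS} through the elementary identity relating the bivariate potentials of $F\ast A$ and of $A$. First I would note that $F\ast A\in\text{AF}(M)$: this was recorded in \S\ref{THWD} for bounded $F$, and the general case follows by monotone approximation $F\wedge n\uparrow F$, so that Theorem~\ref{EXIS} applies to $F\ast A$ and $\nu_{F\ast A}$ is well defined and charges no $m$-bipolar sets.

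The next step is the pathwise Stieltjes identity. Since $(F\ast A)_t=\int_0^t F(X_{s-},X_s)\,dA_s$, the Stieltjes measure $d(F\ast A)_s$ on $[0,\infty)$ coincides with $F(X_{s-},X_s)\,dA_s$ — they agree on every interval $(a,b]$, hence everywhere — and therefore, for every $G\in(\mathcal{B}\times\mathcal{B})_+$ and $\alpha\ge 0$,
\[
\mathcal{U}^\alpha_{F\ast A}G(x)=E^x\int_0^\infty e^{-\alpha t}G(X_{t-},X_t)\,d(F\ast A)_t=E^x\int_0^\infty e^{-\alpha t}(GF)(X_{t-},X_t)\,dA_t=\mathcal{U}^\alpha_A(GF)(x).
\]
Now fix a strictly positive $\gamma$-coexcessive function $\hat h$. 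Applying the second equality of \eqref{BRM} to $F\ast A$ with test function $G$, and then to $A$ with test function $GF\in(\mathcal{B}\times\mathcal{B})_+$, gives
\[
\int G(x,y)\,\tilde{\hat h}(x)\,\nu_{F\ast A}(dxdy)=\lim_{\alpha\uparrow\infty}\alpha(\hat h,\mathcal{U}^\alpha_{F\ast A}G)_m=\lim_{\alpha\uparrow\infty}\alpha(\hat h,\mathcal{U}^\alpha_A(GF))_m=\int G(x,y)\,\tilde{\hat h}(x)\,(F\cdot\nu_A)(dxdy)
\]
for every $G\in(\mathcal{B}\times\mathcal{B})_+$. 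Thus the measure $F\cdot\nu_A$ satisfies the defining identity \eqref{BRM} for $F\ast A$; moreover $F\cdot\nu_A\ll\nu_A$, so since $\nu_A$ charges no $m$-bipolar sets neither does $F\cdot\nu_A$. By the uniqueness assertion of Theorem~\ref{EXIS} we conclude $\nu_{F\ast A}=F\cdot\nu_A$.

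Finally, if $A$ is $\sigma$-integrable then $\nu_A$ is $\sigma$-finite by Theorem~\ref{EXIS}; refining a $\nu_A$-exhausting sequence along the level sets $\{F\le n\}$ shows that $F\cdot\nu_A=\nu_{F\ast A}$ is $\sigma$-finite as well, and hence $F\ast A$ is $\sigma$-integrable, again by Theorem~\ref{EXIS}. The only points needing a little care are the justification of the Stieltjes identity, which is routine and done pathwise, and the fact — already built into Theorem~\ref{EXIS} — that $\nu_{F\ast A}$ is the \emph{unique} representative charging no $m$-bipolar sets, which is what legitimizes identifying it with $F\cdot\nu_A$; I do not expect any serious obstacle.
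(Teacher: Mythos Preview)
Your proof is correct and follows essentially the same route as the paper's: the key identity $\mathcal{U}^\alpha_{F\ast A}G=\mathcal{U}^\alpha_A(GF)$ together with the defining formula \eqref{BRM} yields $\nu_{F\ast A}=F\cdot\nu_A$, and $\sigma$-integrability is obtained by refining a $\nu_A$-exhausting sequence along the level sets $\{F\le n\}$. The only cosmetic difference is the order: the paper first establishes $\sigma$-integrability directly via $\nu^{\hat m}_{F\ast A}(1_{\Gamma_i})\le i\,\nu^{\hat m}_A(1_{H_i})$ and then declares the measure identity ``apparent'', whereas you prove the identity first (spelling out what the paper leaves implicit) and deduce $\sigma$-integrability from the $\sigma$-finiteness equivalence in Theorem~\ref{EXIS}.
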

\begin{proof}
  Since $A$ is $\sigma$-integrable we can write $E\times E=\cup_{i=1}^\infty H_i$ where $H_i\in \mathcal{B}\times \mathcal{B}$ such that $\nu_A^{\hat{m}}(1_{H_i})<\infty$ for each $i$. Let $\Gamma_i:=H_i\cap (F\leq i)$ for each $i$. Then $E\times E=\cup_{i=1}^\infty \Gamma_i$ and we have
    \[\begin{aligned}
        \nu_{F\ast A}^{\hat{m}}(1_{\Gamma_i})&=\lim_{t\rightarrow 0}\frac{1}{t}E^{\hat{m}}\int_0^t 1_{\Gamma_i}\cdot F(X_{s-},X_s)dA_s \\
            &\leq i\lim_{t\rightarrow 0}\frac{1}{t}E^{\hat{m}}\int_0^t 1_{H_i}(X_{s-},X_s)dA_s \\
                &=i\nu_A^{\hat{m}}(1_{H_i})\\
                &<\infty.
    \end{aligned}
    \]
 Hence $F\ast A$ is $\sigma$-integrable. The second assertion is apparent.  
\end{proof}

We can also extend Theorem~A.8 of  \cite{MMS} to the bivariate Revuz measures. Note that in the following proposition $M\equiv 1$.

\begin{proposition}
    Let $A\in \text{AF}$ be $\sigma$-integrable and $\nu_A$ its bivariate Revuz measure. Then
    \begin{equation}\label{BRF}
        (h,\mathcal{U}_A^\alpha F)_m=\int\widetilde{\hat{G}_\alpha h}(x) F(x,y)\nu_A(dxdy)
    \end{equation}
    for any $h\in L^2(E,m)\cap\mathcal{B}_+,F\in (\mathcal{B}\times \mathcal{B})_+$ and  $\alpha\geq 0$. The following formula also holds
    \begin{equation}
        E^{h\cdot m}\int_0^tF(X_{s-},X_s)dA_s= \int_0^t\langle F\cdot \nu_A, \widetilde{\hat{T}_sh}\rangle ds
    \end{equation}
    where $\langle F\cdot \nu_A, \widetilde{\hat{T}_sh}\rangle=\int \widetilde{\hat{T}_sh}(x)F(x,y)\nu_A(dxdy)$.
\end{proposition}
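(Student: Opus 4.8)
The plan is to read \eqref{BRF} off from Theorem~\ref{EXIS} by combining it with a resolvent identity for the bivariate potentials. The starting point is that, for $h\in L^2(E,m)\cap\mathcal{B}_+$ and $\alpha\geq 0$, the function $\hat{G}_\alpha h$ is $\alpha$-coexcessive: it belongs to $\mathcal{F}_\text{e}$ (for $\alpha=0$ this uses Hypothesis~\ref{HYP2}), and $\text{e}^{-\alpha t}\hat{T}_t\hat{G}_\alpha h=\int_t^\infty\text{e}^{-\alpha s}\hat{T}_s h\,ds\uparrow\hat{G}_\alpha h$ as $t\downarrow 0$. Since $A$ is $\sigma$-integrable, the final assertion of Theorem~\ref{EXIS} applies with the (not necessarily strictly positive) $\gamma$-coexcessive function $\hat{h}:=\hat{G}_\alpha h$ and $\gamma:=\alpha$, yielding
\[
\int F(x,y)\,\widetilde{\hat{G}_\alpha h}(x)\,\nu_A(dxdy)=\lim_{\beta\uparrow\infty}\beta\,\big(\hat{G}_\alpha h,\mathcal{U}_A^{\beta}F\big)_m .
\]
Hence it suffices to show that the right-hand side equals $(h,\mathcal{U}_A^\alpha F)_m$.

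The key computation is the resolvent identity $G_\alpha\mathcal{U}_A^{\beta}F=(\beta-\alpha)^{-1}\big(\mathcal{U}_A^\alpha F-\mathcal{U}_A^{\beta}F\big)$ for $0\le\alpha<\beta$. Since $M\equiv 1$, the process $t\mapsto(F\ast A)_t=\int_0^t F(X_{s-},X_s)dA_s$ is an ordinary additive functional of $X$, so by the Markov property $E^x\big[\mathcal{U}_A^{\beta}F(X_s)\big]=E^x\int_{(s,\infty)}\text{e}^{-\beta(v-s)}d(F\ast A)_v$; multiplying by $\text{e}^{-\alpha s}$, integrating in $s$ over $(0,\infty)$ and applying Fubini (all integrands being non-negative) gives the identity. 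Using in addition the $m$-duality $(\hat{G}_\alpha h,g)_m=(h,G_\alpha g)_m$ — which holds for $g\in L^2$ and extends to $g=\mathcal{U}_A^{\beta}F\in\mathcal{B}_+$ by monotone approximation — we obtain
\[
\beta\,\big(\hat{G}_\alpha h,\mathcal{U}_A^{\beta}F\big)_m=\beta\,\big(h,G_\alpha\mathcal{U}_A^{\beta}F\big)_m=\frac{\beta}{\beta-\alpha}\,\big(h,\mathcal{U}_A^\alpha F-\mathcal{U}_A^{\beta}F\big)_m .
\]
Since $A_0=0$, $\mathcal{U}_A^{\beta}F\downarrow 0$ pointwise as $\beta\uparrow\infty$, so $\mathcal{U}_A^\alpha F-\mathcal{U}_A^{\beta}F\uparrow\mathcal{U}_A^\alpha F$ and monotone convergence gives $\big(h,\mathcal{U}_A^\alpha F-\mathcal{U}_A^{\beta}F\big)_m\uparrow(h,\mathcal{U}_A^\alpha F)_m$ in $[0,\infty]$; as $\beta/(\beta-\alpha)\to1$, the last display tends to $(h,\mathcal{U}_A^\alpha F)_m$, which proves \eqref{BRF}.

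The second formula I would derive from \eqref{BRF} by Laplace inversion in $\alpha$. Writing $v(t,x):=E^x\int_0^tF(X_{s-},X_s)dA_s$, Fubini gives $\mathcal{U}_A^\alpha F(x)=\alpha\int_0^\infty\text{e}^{-\alpha t}v(t,x)\,dt$, hence $(h,\mathcal{U}_A^\alpha F)_m=\alpha\int_0^\infty\text{e}^{-\alpha t}\big(E^{h\cdot m}\int_0^tF(X_{s-},X_s)dA_s\big)dt$. On the other hand, taking quasi-continuous $m$-versions in $\hat{G}_\alpha h=\int_0^\infty\text{e}^{-\alpha s}\hat{T}_s h\,ds$ and using Fubini,
\[
\int\widetilde{\hat{G}_\alpha h}(x)\,F(x,y)\,\nu_A(dxdy)=\int_0^\infty\text{e}^{-\alpha s}\langle F\cdot\nu_A,\widetilde{\hat{T}_s h}\rangle\,ds=\alpha\int_0^\infty\text{e}^{-\alpha t}\Big(\int_0^t\langle F\cdot\nu_A,\widetilde{\hat{T}_s h}\rangle\,ds\Big)dt .
\]
By \eqref{BRF}, the non-decreasing functions $t\mapsto E^{h\cdot m}\int_0^tF(X_{s-},X_s)dA_s$ and $t\mapsto\int_0^t\langle F\cdot\nu_A,\widetilde{\hat{T}_s h}\rangle\,ds$ have the same Laplace transform for every $\alpha>0$, hence agree for Lebesgue-almost every $t$; the latter being continuous, they agree for all $t\ge 0$.

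The argument is largely formal and the real work lies in the bookkeeping; I expect the main obstacle to be (a) verifying that $\widetilde{\hat{G}_\alpha h}$ is a genuine quasi-continuous $m$-version of an $\alpha$-coexcessive function, so that Theorem~\ref{EXIS} is literally applicable — which rests on the weak duality of $X$ with $\check X$ relative to $\hat m$ and on the existence of quasi-continuous versions of elements of $\mathcal{F}_\text{e}$ — and (b) justifying the interchanges of limits, integrals and expectations (and the joint measurability of $s\mapsto\widetilde{\hat{T}_s h}$), which I would handle by the $\sigma$-integrability of $A$: decompose $E\times E=\bigcup_i\Gamma_i$ with $\nu_A^{\hat m}(1_{\Gamma_i})<\infty$, prove the identities for $F\cdot1_{\Gamma_i}$ with $F$ bounded, and pass to the limit by monotone convergence. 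Even the one substantive ingredient, the resolvent identity for $\mathcal{U}_A^{\beta}F$, is a direct Markov-property computation, the only nuisance being the extension of the duality $(\hat{G}_\alpha h,g)_m=(h,G_\alpha g)_m$ to the possibly non-$L^2$, possibly infinite function $g=\mathcal{U}_A^{\beta}F$, which again reduces to monotone approximation.
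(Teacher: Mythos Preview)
Your proof of \eqref{BRF} is correct and follows essentially the same line as the paper: apply Theorem~\ref{EXIS} with the $\alpha$-coexcessive function $\hat{G}_\alpha h$, pass to $(h,G_\alpha\mathcal{U}_A^\beta F)_m$ by duality, invoke the resolvent identity $\mathcal{U}_A^\alpha F-\mathcal{U}_A^{\beta}F=(\beta-\alpha)G_\alpha\mathcal{U}_A^{\beta}F$ (which the paper simply cites from Sharpe rather than deriving), and let $\beta\to\infty$. The paper declares the second formula an immediate consequence and does not write out the Laplace-inversion argument you supply; your derivation of it is a reasonable way to fill that gap.
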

\begin{proof}
    We only need to prove \eqref{BRF}. In fact since $\hat{G}_\alpha g$ is $\alpha$-coexcessive it follows from \eqref{BRM} that
    \[\begin{aligned}
        \int\widetilde{\hat{G}_\alpha h}(x) F(x,y)\nu_A(dxdy)&=\lim_{n\rightarrow \infty}n(\hat{G}_\alpha h, \mathcal{U}_A^{\alpha +n}F)_m\\&=\lim_{n\rightarrow \infty}n(h, G_\alpha \mathcal{U}_A^{\alpha +n}F)_m\\&=\lim_{n\rightarrow \infty}(h,\mathcal{U}_A^\alpha F-\mathcal{U}_A^{n+\alpha}F)_m\\&=(h,\mathcal{U}_A^\alpha F)_m.
    \end{aligned}\]
 The third equality is because of the formula (see Proposition 3.4 of  \cite{SMJ})
 \[\mathcal{U}_A^\alpha F-\mathcal{U}_A^{n+\alpha}F=nG_\alpha \mathcal{U}_A^{\alpha +n}F.\] That completes the proof.  
\end{proof}

The uniqueness of the correspondence of the additive functionals and bivariate Revuz measures is as follows.

\begin{proposition}[Uniqueness]\label{UNIQUE}
    Let $A^1,A^2\in \text{AF}$ be $\sigma$-integrable. Then $A^1$ and $A^2$ are $m$-equivalent if and only if their bivariate Revuz measures are equal, i.e. $\nu_{A^1}=\nu_{A^2}$.
\end{proposition}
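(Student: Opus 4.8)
The plan is as follows. The implication ``$A^1$ and $A^2$ are $m$-equivalent $\Rightarrow$ $\nu_{A^1}=\nu_{A^2}$'' is immediate from Theorem~\ref{EXIS}: the measure $\nu_A$ is pinned down by the limits $\lim_{t\downarrow0}\frac1tE^{\hat h\cdot m}\int_0^tF(X_{s-},X_s)\,dA_s$, and these depend on $A$ only through its $m$-equivalence class. All the content lies in the converse, which I would prove by recovering the $m$-equivalence class of a $\sigma$-integrable additive functional from its bivariate $\alpha$-potentials, these being in turn determined by $\nu_A$ through formula~\eqref{BRF}. Suppose then $\nu_{A^1}=\nu_{A^2}$. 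Since $A^1,A^2$ are $\sigma$-integrable, \eqref{BRF} gives, for every $h\in L^2(E,m)\cap\mathcal B_+$, every $F\in(\mathcal B\times\mathcal B)_+$ and every $\alpha\ge 0$,
\[
  (h,\mathcal U^\alpha_{A^1}F)_m=\int\widetilde{\hat{G}_\alpha h}(x)\,F(x,y)\,\nu_{A^1}(dxdy)=\int\widetilde{\hat{G}_\alpha h}(x)\,F(x,y)\,\nu_{A^2}(dxdy)=(h,\mathcal U^\alpha_{A^2}F)_m.
\]
Letting $h$ run over indicators of Borel sets of finite $m$-measure forces $\mathcal U^\alpha_{A^1}F=\mathcal U^\alpha_{A^2}F$ $m$-a.e., for every $F$ and every $\alpha$.

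It then remains to show that the family $\{\mathcal U^\alpha_AF:F\in b(\mathcal B\times\mathcal B)_+,\ \alpha>0\}$, viewed as $m$-a.e.\ defined functions, determines $A$ up to $m$-equivalence; this is the classical uniqueness step of the Revuz correspondence, adapted to the present setting. Specialising $F$ to functions of one variable produces the ordinary potentials $R^\alpha_Ag(x)=E^x\int_0^\infty e^{-\alpha t}g(X_t)\,dA_t$, $g\in b\mathcal B_+$ (and their ``$X_{t-}$''-analogues), each of which is $\alpha$-excessive; additivity of $A$ and the Markov property give $E^x\int_0^se^{-\alpha t}g(X_t)\,dA_t=R^\alpha_Ag(x)-E^x[e^{-\alpha s}R^\alpha_Ag(X_s)]$, a quantity still determined $m$-a.e.\ because the killed semigroup charges no $m$-polar set; and a monotone-class argument in the test functions together with uniqueness of Laplace transforms in $\alpha$ and $s$ then recovers, for $m$-a.e.\ $x$, the law under $P^x$ of the random measure $dA_t$, i.e.\ $A^1$ and $A^2$ are $m$-equivalent. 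Alternatively, and perhaps more economically, one may invoke \S\ref{THWD}: since there $X$ is put in weak duality with $\check X$ relative to $\hat m=\hat g\cdot m$ and $\hat g$ is q.e.\ strictly positive, ``$\nu_{A^1}=\nu_{A^2}$'' is equivalent to ``$\nu^{\hat m}_{A^1}=\nu^{\hat m}_{A^2}$'' and ``$m$-equivalent'' to ``$\hat m$-equivalent'', so the claim reduces to the uniqueness of the bivariate Revuz correspondence in the weakly dual case, for which we may cite \cite{SMJ} and \cite{JGY}.

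The one point that is not a routine transcription of the dual-case argument, and hence the main obstacle, is the upgrade from $m$-a.e.\ identities to q.e.\ identities for the $\alpha$-excessive functions occurring above, which must be carried out before the Markov property is applied. Here $m$ need not be $X$-excessive, but by the transience reduction of \S\ref{THWD} it is equivalent to the $X$-excessive measure $\hat m=\hat g\cdot m$; in particular $m$-polar $=$ $\hat m$-polar and $m$ is a reference measure for $X$, so any two $\alpha$-excessive functions that agree $m$-a.e.\ agree q.e. The remaining effort is bookkeeping: one must check that every intermediate potential that enters the argument is genuinely $\alpha$-excessive (equivalently, finely continuous), so that no step uses pointwise information on a set where only $m$-a.e.\ data is available.
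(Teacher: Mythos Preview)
Your proposal is correct, and in fact your ``alternative'' route is exactly the paper's proof: the paper simply observes that $\nu_{A^1}=\nu_{A^2}$ is equivalent to $\nu_{A^1}^{\hat m}=\hat g\cdot\nu_{A^1}=\hat g\cdot\nu_{A^2}=\nu_{A^2}^{\hat m}$ (since $\hat g$ is q.e.\ strictly positive), and then cites the uniqueness of the bivariate Revuz correspondence in the weakly dual setting, namely Proposition~6.2 of \cite{JGY}. Your first, more self-contained approach via \eqref{BRF} and the reconstruction of $A$ from its bivariate potentials is a genuine alternative; it buys independence from the duality literature at the cost of the bookkeeping you correctly flag (the $m$-a.e.\ to q.e.\ upgrade for $\alpha$-excessive functions, which works here because $m$ is equivalent to the excessive measure $\hat m$), whereas the paper's route is a one-line reduction that leans entirely on the already-established weak duality of \S\ref{THWD}.
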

\begin{proof}
    Note that $\nu_{A^1}=\nu_{A^2}$ if and only if $\nu^{\hat{m}}_{A^1}=\hat{g}\cdot\nu_{A^1}=\hat{g}\cdot\nu_{A^2} =\nu_{A^2}^{\hat{m}}$. The uniqueness is obvious by \text{Proposition 6.2} of  \cite{JGY}.  
\end{proof}

\begin{remark}
Note that from Theorem~\ref{EXIS} we can deduce that the function $\hat{g}$ in the definition of the $\sigma$-integrable additive functionals can be replaced by any other strictly positive $\alpha$-coexcessive function for any $\alpha\geq 0$.

Since $A$ does not charge $[S_M, \infty)$ it follows that $\nu_A$ is supported on $E_M\times E_M$. If $M\in \text{MF}_+$ then $E_M=E$ and
\[
	S_M=J_B:=\inf\{t>0: (X_{t-},X_t)\in B\}
	\]
where $B$ is a Borel subset of $E\times E$ and disjoint from the diagonal $d$ (see \text{Lemma \ref{DECOM}}). Hence $\nu_A$ is supported on $B^c$. Generally any $A\in \text{AF}(M)$ can be decomposed by
    \[
        A=A^c+A^n+A^q
    \]
    where $A^c\in \text{PCAF}(M)$, $A^n$ is a pure jump natural AF of $(X,M)$ and $A^q$ is a pure-jump AF of $(X,M)$ which is quasi-left-continuous (q.l.c.) in the sense that every discontinuity of the mapping $t\mapsto A_t^q$ is also a discontinuity of $t\mapsto X_t$. Note that under the sector condition every natural AF is continuous a.s. and hence we can write $A=\tilde{A}^c+A^q$ where $\tilde{A}^c=A^c+A^n$ is continuous. In particular the continuous part $\tilde{A}^c$ of $A$ is $\sigma$-integrable and its bivariate Revuz measure $\nu_{\tilde{A}^c}$ is supported on the diagonal $d$. On the other hand under some appropriate conditions (see \text{Theorem 5.1} of  \cite{SMJ}) the pure-jump part $A^q$ of $A$ is equivalent to an AF
    \[
    	C_t=\sum_{s\geq t}\Upsilon(X_{s-},X_s)M_s,\quad t\geq 0
    	\] where $\Upsilon\in (\mathcal{B}\times \mathcal{B})_+$ is a function carried by $E_M\times E_M$, finite everywhere and vanishes on $d$. In particular under the same conditions $A^q$ is $\sigma$-integrable and thus $A$ is also $\sigma$-integrable.
\end{remark}

\begin{proposition}[Proposition 5.6,  \cite{SMJ}]
    Assume that the resolvent $U(x,dy)$ of $X$ is absolutely continuous with respect to $m(dy)$ for $m$-a.e. $x$. If $A\in \text{AF}(M)$ and $A_t=A_{S_M-}$ for any $ t\geq S_M$, then $A$ is $\sigma$-integrable.
\end{proposition}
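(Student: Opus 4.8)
The plan is to reduce this to the corresponding statement in the weak-duality setting and invoke it. First, recall that by definition $A$ is $\sigma$-integrable exactly when the bivariate Revuz measure $\nu_A^{\hat m}$ is $\sigma$-finite, and that $\nu_A^{\hat m}=\hat g\cdot\nu_A$ with $\hat g$ everywhere strictly positive, so $\nu_A^{\hat m}$ is $\sigma$-finite if and only if $\nu_A$ is. Hence, as in Theorem~\ref{EXIS}, it suffices to exhibit a countable measurable partition $E\times E=\bigcup_{i\ge 1}F_i$ with $\nu_A^{\hat m}(1_{F_i})<\infty$ for every $i$.

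Next I would set up the transfer to Sharpe's framework. By \S\ref{THWD}, $X$ is in weak duality with an $m$-standard process $\check X$ relative to $\hat m=\hat g\cdot m\in\text{Exc}$, and every semipolar set is $\hat m$-polar. Since $\hat m$ and $m$ are equivalent, the hypothesis ``$U(x,dy)\ll m(dy)$ for $m$-a.e.\ $x$'' is the same as ``$U(x,dy)\ll\hat m(dy)$ for $\hat m$-a.e.\ $x$''; and $A\in\text{AF}(X,M)$ with $A_t=A_{S_M-}$ for $t\ge S_M$ is precisely the termination hypothesis in Proposition~5.6 of \cite{SMJ}. Applying that proposition to the weakly dual pair $(X,\check X)$ with reference measure $\hat m$ then gives that $\nu_A^{\hat m}$ is $\sigma$-finite, whence, by the first paragraph, $A$ is $\sigma$-integrable.

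The delicate point is to confirm that Proposition~5.6 of \cite{SMJ} is genuinely available in the present generality: it is usually stated under (full) duality, whereas here one has only weak duality relative to $\hat m$ together with absolute continuity of the potential kernel of $X$ (but not, a priori, of the co-potential kernel of $\check X$). One must check that the argument of \cite{SMJ} uses only the data at hand --- the dual process $\check X$, the excessive reference measure $\hat m$, the identification of semipolar and $\hat m$-polar sets, and $U\ll\hat m$ --- as was already the case for Lemma~\ref{DECOM}. If one prefers a direct route, the argument runs: decompose $A=\tilde A^{c}+A^{q}$ as in the Remark preceding this proposition, where $\tilde A^{c}$ is continuous and hence already $\sigma$-integrable, so the only issue is the pure-jump part $A^{q}$; for $A^{q}$ use a L\'evy system $(N,H)$ of $(X,M)$ under $\hat m$, where by $U\ll\hat m$ the jump kernel may be taken $\sigma$-finite (and the Revuz measure of the PCAF $H$ relative to $\hat m$ is smooth, hence $\sigma$-finite on a suitable exhaustion), combine this with the representation of $A^{q}$ with an everywhere-finite jump function carried by $E_M\times E_M$ and the fact that $A^{q}$ is constant on $[S_M,\infty)$ (so $\nu_{A^{q}}^{\hat m}$ is carried by $B^{c}\cap(E_M\times E_M)$, cf.\ Lemma~\ref{DECOM}), and read off the partition $\{F_i\}$ explicitly. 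In either approach, the main obstacle is exactly this verification that absolute continuity of the potential kernel of $X$ alone suffices; granting it, the remainder is routine bookkeeping.
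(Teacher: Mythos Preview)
Your approach is exactly what the paper does: it gives no proof and simply attributes the result to Proposition~5.6 of \cite{SMJ}, relying implicitly on the weak-duality setup of \S\ref{THWD} (with reference measure $\hat m=\hat g\cdot m$) to put us in Sharpe's framework. Your first two paragraphs spell out precisely this reduction, and the observation that $U(x,\cdot)\ll m$ $m$-a.e.\ is equivalent to $U(x,\cdot)\ll\hat m$ $\hat m$-a.e.\ (since $\hat g>0$ q.e.) is the only real check needed; the paper leaves all of this implicit. Your caveat about weak versus full duality is a fair scruple, but the paper evidently takes the applicability of \cite{SMJ} for granted here, just as it does in Lemma~\ref{DECOM} and Proposition~\ref{UNIQUE}.
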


\subsection{Left and right Revuz measures}

Let $A\in \text{AF}(M)$ and $\hat{h}$ a $\gamma$-coexcessive function for some constant $\gamma\geq 0$. We can define the \emph{left Revuz measure} $\lambda_A^{\hat{h}\cdot m}$ and \emph{right Revuz measure} $\rho_A^{\hat{h}\cdot m}$ of $A$ relative to $\hat{h}\cdot m$ by
\begin{equation}\label{LRM}
    \lambda_A^{\hat{h}\cdot m}(f):=\uparrow \lim_{t\downarrow 0}\frac{1}{t}E^{\hat{h}\cdot m}\int_0^tf(X_{s-})dA_s
\end{equation}
and
\begin{equation}
    \rho_A^{\hat{h}\cdot m}(f):=\uparrow \lim_{t\downarrow 0}\frac{1}{t}E^{\hat{h}\cdot m}\int_0^tf(X_s)dA_s
\end{equation}
for any $f\in \mathcal{B}_+$. The left Revuz measure is also called the \emph{Revuz measure} in abbreviation. Note that we need to assume that $X_{\zeta-}$ exists in \eqref{LRM} if $A$ charges $\zeta$.
Similarly to \text{Theorem \ref{EXIS}} we can deduce that if $\lambda_A^{\hat{h}\cdot m}$ is $\sigma$-finite then there exists a $\sigma$-finite measure $\lambda_A$ on $E$ charging no $m$-polar sets such that
\begin{equation}\label{RM}
    \lambda_A^{\hat{h}\cdot m}(f)=\lambda_A(\hat{h}\cdot f)
\end{equation}
for any $\gamma$-coexcessive q.c. function $\hat{h}$, $\gamma\geq 0$ and $f\in \mathcal{B}_+$. We call $\lambda_A$  the \emph{Revuz Measure} of $A$ \emph{relative to} $m$. We also define the \emph{left} and \emph{right marginal measures} $\nu_A^1$ and $\nu_A^2$ of $\nu_A$ on $E$ by
\[	
	\nu_A^1(f):=\nu_A(f\otimes 1),\quad \nu_A^2(f):=\nu_A(1\otimes f)
	\]  for any $f\in\mathcal{B}_+$. Here $(f\otimes 1)(x,y):=f(x),(1\otimes f)(x,y):=f(y)$ for any $x,y\in E$. Clearly $\nu_A^1$ and $\nu_A^2$ charge no $m$-polar sets and it follows from  \eqref{BRM} and \eqref{RM} that
\[
\lambda_A(\hat{h}\cdot f)=\lambda_A^{\hat{h}\cdot m}(f)=\nu_A^{\hat{m}}(f\otimes 1)=\nu_A(f\hat{h}\otimes 1)=\nu_A^1(\hat{h}\cdot f).
\]
 Hence we have the following proposition.

\begin{proposition}
    $\lambda_A=\nu_A^1$.
\end{proposition}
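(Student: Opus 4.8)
The plan is to prove $\lambda_A = \nu_A^1$ by showing that the two $\sigma$-finite measures on $E$ agree against an arbitrary test function $f \in \mathcal{B}_+$, using a single fixed strictly positive $\gamma$-coexcessive function $\hat h$ (for instance $\hat h = \hat g \in \mathcal{F}_{\mathrm e}$, which is q.e.\ strictly positive and $0$-coexcessive by the weak-duality construction of \S\ref{THWD}) as a common ``weight'' linking both measures to their $\hat h\cdot m$-potential versions. The point is that the right Revuz measure identity \eqref{RM} and the bivariate identity \eqref{BRM} are both stated relative to $\hat h \cdot m$, so it suffices to check that the two $\hat h\cdot m$-level functionals coincide.

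First I would unwind the definitions. By \eqref{RM}, for the chosen q.c.\ $\gamma$-coexcessive $\hat h$ and any $f \in \mathcal{B}_+$ we have $\lambda_A(\hat h \cdot f) = \lambda_A^{\hat h\cdot m}(f)$, and by the defining formula \eqref{LRM} the latter equals $\uparrow\lim_{t\downarrow 0}\frac1t E^{\hat h\cdot m}\int_0^t f(X_{s-})\,dA_s$. On the other side, apply \eqref{BRM} with the bivariate test function $F := f\otimes 1$, i.e.\ $F(x,y)=f(x)$; then $F(X_{s-},X_s)=f(X_{s-})$, so the middle term of \eqref{BRM} reads $\lim_{t\downarrow 0}\frac1t E^{\hat h\cdot m}\int_0^t f(X_{s-})\,dA_s$, which is literally the same limit. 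Hence the left-hand side of \eqref{BRM}, namely $\int F(x,y)\widetilde{\hat h}(x)\,\nu_A(dxdy) = \int f(x)\widetilde{\hat h}(x)\,\nu_A(dxdy) = \nu_A^{\hat m}(f\otimes 1)$ when $\hat h = \hat g$, equals $\lambda_A^{\hat h\cdot m}(f)$. Rewriting $\nu_A^{\hat m}(f\otimes 1) = \int f(x)\hat g(x)\,\nu_A(dxdy) = \nu_A\big((f\hat g)\otimes 1\big) = \nu_A^1(\hat g\cdot f)$ by the definition of the left marginal, we obtain $\lambda_A(\hat g\cdot f) = \nu_A^1(\hat g\cdot f)$ for all $f\in\mathcal{B}_+$, exactly the chain of equalities already displayed in the excerpt just before the proposition.

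Finally, since $\hat g$ is strictly positive everywhere (hence q.e.), the map $f \mapsto \hat g\cdot f$ ranges over all of $\mathcal{B}_+$ up to the evident substitution $f \leadsto f/\hat g$, so $\lambda_A(\hat g\cdot f) = \nu_A^1(\hat g\cdot f)$ for every $f\in\mathcal{B}_+$ forces $\lambda_A = \nu_A^1$ as measures on $E$; both are $\sigma$-finite and charge no $m$-polar set, so no measurability or $\sigma$-finiteness subtlety intervenes. One small point to be careful about: the definition \eqref{LRM} of $\lambda_A^{\hat h\cdot m}$ involves the possible contribution of $A$ charging $\zeta$ (and the existence of $X_{\zeta-}$), whereas in \eqref{BRM} one has the convention $F(x,y)=0$ when $x=\Delta$ or $y=\Delta$; these conventions are consistent because $A\in\mathrm{AF}(M)$ does not charge $[S_M,\infty)$ and the boundary term is handled identically in both normalizations, so the two limits genuinely agree. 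This compatibility of the boundary/killing conventions is the only place requiring any thought; the rest is just matching the two formulas term by term.
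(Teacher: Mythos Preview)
Your proposal is correct and follows essentially the same approach as the paper: the paper's ``proof'' is precisely the displayed chain of equalities $\lambda_A(\hat{h}\cdot f)=\lambda_A^{\hat{h}\cdot m}(f)=\nu_A(f\hat{h}\otimes 1)=\nu_A^1(\hat{h}\cdot f)$ placed immediately before the proposition, and you have reproduced and justified that chain (with $\hat h=\hat g$), together with the observation that strict positivity of $\hat g$ lets one strip the weight. Your extra remarks on the $\zeta$/$\Delta$ conventions are sound but not needed beyond what the paper already assumes.
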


However we cannot obtain similar results about the right Revuz measures (i.e. $\rho_A=\nu_A^2$). To see this let $\hat{h}$ and $f$ be above and assume that $A$ does not charge $\zeta$, i.e. $A_\zeta-A_{\zeta-}=0$. Then we have
\[
\begin{aligned}
    \rho_A^{\hat{h}\cdot m}(f)&=\uparrow \lim_{t\downarrow 0}\frac{1}{t}E^{\hat{h}\cdot m}\int_0^tf(X_s)dA_s\\&=\nu_A^{\hat{h}\cdot m}(1\otimes f)\\&=\nu_A(\hat{h}\otimes f)\\&\neq \nu_A^2(\hat{h}\cdot f).
\end{aligned}\]
However on the other hand if $A$ is continuous then apparently
\begin{equation}\label{LRRM}
    \lambda_A=\nu_A^1=\nu_A^2.
\end{equation}
In particular the Revuz measure and right Revuz measure of $A$  are the same. Moreover if $A$ is a PCAF of $X$  then the measure in \eqref{LRRM} is exactly the smooth measure corresponding to $A$ introduced in Appendix~\ref{PCAF}.

\subsection{Examples}\label{EXAM}
In this section we assume that $M\in \text{MF}_+$ has the decomposition \eqref{MFD} in \text{Lemma \ref{DECOM}}, i.e.
\[
     M_t=\prod_{0<s\leq t}(1-\Phi(X_{s-},X_s))\text{exp}\{-\int_0^t a(X_s)dA_s\}1_{[0,J_B)}(t)
\]
with some functions $\Phi,a$, PCAF $A$ and a subset $B$ of $E\times E$. We shall compute the bivariate Revuz measures of some typical AFs and the primary tool is \emph{L\'{e}vy system}.  L\'{e}vy system is used to  characterize the discontinuous part of the Markov process. It is a pair $(N,H)$ for $X$ where $N$ is a kernel on $(E,\mathcal{B})$ such that $N(x,\{x\})=0$ for any $x\in E$ and $H$ is a PCAF of $X$ such that the 1-potential of $H$ is bounded and  for any $F\in (\mathcal{B}\times \mathcal{B})_+$, any predictable process $Y$ and $x\in E$,
\begin{equation}\label{LS}
    E^x\sum_{0<s\leq t}Y_sF(X_{s-},X_s)=E^x\int_0^tY_sdH_s \int F(X_s,y)N(X_s,dy).
\end{equation}
Let $\mu_H$ be the corresponding smooth measure, i.e. Revuz measure, of the PCAF $H$ and define
\begin{equation}\label{CM}
    \nu(dxdy):=N(x,dy)\mu_H(dx).
\end{equation}
The measure $\nu$ is called the \emph{canonical measure} of $X$. Clearly $\nu$ is a $\sigma$-finite measure supported on $E\times E \setminus d$ and charges no $m$-bipolar sets. It follows from \eqref{LS} and \eqref{RFC} that for any $F\in (\mathcal{B}\times \mathcal{B})_+$ and $\gamma$-coexcessive function $\hat{h}$ with some constant $\gamma\geq 0$,
\[
    \int F(x,y)\hat{h}(x)\nu(dxdy)=\lim_{t\downarrow 0}\frac{1}{t}E^{\hat{h}\cdot m}\sum_{s\leq t}F(X_{s-},X_{s}).
\]
Let $X^M=(X,M)$ be the subprocess of $X$ killed by $M$. By \text{Theorem~3.4} of  \cite{JGY} the L\'{e}vy system of $X^M$ is $(N_0,H)$ where $N_0$ is given by
\[
N_0(x,dy)=(1_{B^c}-1_{B^c}\cdot \Phi)(x,y)N(x,dy).
\]
Moreover the canonical measure of $X^M$ is
\begin{equation}\label{CMSP}
	\nu^M(dxdy)=(1_{B^c}-1_{B^c}\cdot \Phi)(x,y)\nu(dxdy).
\end{equation}
Note that $\bar{M}:=1-M$ is an AF of $(X,M)$ and  Stieltjes logarithm $[M]$ is an AF of $(X,S_M)$. Similarly to  \cite{JGY} and the proof  of Theorem~\ref{EXIS} we can deduce the following characterizations for $\nu_{\bar{M}}$ and $\nu_{[M]}$.

\begin{proposition}\label{SLMD3}
    Assume that $M\in \text{MF}_+$ has the decomposition \eqref{MFD}. Then its Stieltjes logarithm $[M]$ is $\sigma$-integrable and the bivariate Revuz measure of $[M]$ is
    \begin{equation}\label{SLMD2}
        \nu_{[M]}(dxdy)=1_{B^c}(x,y)\cdot \Phi(x,y)\cdot \nu(dx dy)+\delta_y(dx)a(y)\mu_A(dy)
    \end{equation}
    where $\nu$ is the canonical measure of $X$ defined by \eqref{CM}, $\delta_y$ is the point mass of $\{y\}$ and $\mu_A$ is the smooth measure associated with PCAF $A$. In particular if $M\in \text{MF}_{++}$ then
    \begin{equation}\label{SLMD4}
        \nu_{[M]}(dxdy)=\Phi(x,y)\cdot \nu(dx dy)+\delta_y(dx)a(y)\mu_A(dy).
    \end{equation}
\end{proposition}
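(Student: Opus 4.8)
The plan is to use the explicit description of $[M]$ supplied by Corollary~\ref{SLD}, to replace $[M]$ by an additive functional of $X$ that is \emph{not} killed at $S_M$, and then to reduce the computation of its bivariate Revuz measure to the formula for the canonical measure $\nu$ and to the PCAF Revuz correspondence, both of which are already available.

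By Corollary~\ref{SLD}, $[M]_t=\sum_{s\le t}\Phi(X_{s-},X_s)1_{\{s<S_M\}}+\int_0^t1_{\{s<S_M\}}a(X_s)\,dA_s$. Since $M\in\text{MF}_+$ we have $M_0\equiv1$, hence $S_M>0$ $P^x$-a.s.\ for \emph{every} $x\in E$; and by Lemma~\ref{DECOM}, $S_M=J_B$, so $(X_{s-},X_s)\in B^c$ for every $s<S_M$. Setting
\[
A'_t:=\sum_{s\le t}(1_{B^c}\cdot\Phi)(X_{s-},X_s)+\int_0^t a(X_s)\,dA_s,
\]
one checks directly that $[M]_t=A'_{t\wedge S_M}$: the would-be jump of $A'$ at $S_M$ carries the factor $1_{B^c}(X_{S_M-},X_{S_M})=0$, and for $s<S_M$ one has $1_{B^c}(X_{s-},X_s)=1$, while the single time $s=S_M$ contributes nothing to the continuous integral. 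Thus $[M]$ is the additive functional $A'$ stopped at the exact terminal time $S_M$, and because $P^x(S_M>0)=1$ for all $x$, this stopping leaves the bivariate Revuz correspondence unchanged --- both limits in \eqref{BRM} are local at $t=0$ --- so $\nu_{[M]}=\nu_{A'}$.

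It remains to compute $\nu_{A'}$. By monotonicity of the Revuz limits (as in the proof of Theorem~\ref{EXIS}), $\nu_{A'}=\nu_{C}+\nu_{K}$, where $C_t:=\sum_{s\le t}(1_{B^c}\cdot\Phi)(X_{s-},X_s)$ and $K_t:=\int_0^t a(X_s)\,dA_s$. For $K$: $K=(1\otimes a)\ast A$ in the notation of the corollary following Theorem~\ref{EXIS}, so $K$ is $\sigma$-integrable with $\nu_K=(1\otimes a)\cdot\nu_A$; since $A$ is a PCAF, $\nu_A$ is carried by the diagonal with $\nu_A(dxdy)=\delta_y(dx)\mu_A(dy)$ (as recorded after \eqref{LAF2}), whence $\nu_K(dxdy)=\delta_y(dx)a(y)\mu_A(dy)$. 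For $C$: apply the identity for the canonical measure displayed just before \eqref{CMSP} --- itself a consequence of the L\'evy system formula \eqref{LS} and \eqref{RFC} --- to the function $G\cdot1_{B^c}\cdot\Phi$ in place of $F$, to get, for every $G\in(\mathcal B\times\mathcal B)_+$ and every strictly positive $\gamma$-coexcessive $\hat h$,
\[
\lim_{t\downarrow0}\frac1t E^{\hat h\cdot m}\sum_{s\le t}(G\cdot1_{B^c}\cdot\Phi)(X_{s-},X_s)=\int G(x,y)\,1_{B^c}(x,y)\,\Phi(x,y)\,\tilde{\hat{h}}(x)\,\nu(dxdy);
\]
comparing this with \eqref{BRM} for the additive functional $C$ identifies $\nu_C(dxdy)=1_{B^c}(x,y)\,\Phi(x,y)\,\nu(dxdy)$. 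Adding the two pieces gives \eqref{SLMD2}; as $\nu$ and $\mu_A$ are $\sigma$-finite, so is $\nu_{[M]}$, hence $[M]$ is $\sigma$-integrable by Theorem~\ref{EXIS}. Finally, if $M\in\text{MF}_{++}$ then $M_t>0$ for all $t<\zeta$, so $S_M=\zeta$ and no pair $(X_{s-},X_s)$ with $s<\zeta$ lies in $B$; one may then take $B=\emptyset$ in \eqref{MFD}, i.e.\ $1_{B^c}\equiv1$, and \eqref{SLMD2} reduces to \eqref{SLMD4}.

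I expect the main obstacle to be the reduction in the second paragraph --- identifying $[M]$ with the unkilled functional $A'$ stopped at $S_M$ and justifying that this stopping does not alter the bivariate Revuz measure. Concretely this amounts to showing that the factor $1_{\{s<S_M\}}$ may be discarded inside the limit $t\downarrow0$, which relies on $S_M>0$ $P^x$-a.s.\ for all $x$ together with the perfect/exact regularization of additive and multiplicative functionals and terminal times (in the spirit of \cite{SMJ2} and \cite{JGY}). Everything after that reduction is bookkeeping resting on Theorem~\ref{EXIS}, the corollary $\nu_{F\ast A}=F\cdot\nu_A$, and the canonical-measure identity already established in the text.
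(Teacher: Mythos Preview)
Your argument is correct and, in fact, more explicit than what the paper provides: the paper gives no proof at all, merely pointing to \cite{JGY} together with the method of Theorem~\ref{EXIS}. Concretely, the intended route there is to pass to the weak-duality setup of \S\ref{THWD} (relative to $\hat m=\hat g\cdot m$), invoke the corresponding formula for $\nu_{[M]}^{\hat m}$ already established in \cite{JGY}, and then read off $\nu_{[M]}$ from the identity $\hat g(x)\,\nu_{[M]}(dxdy)=\nu_{[M]}^{\hat m}(dxdy)$ furnished by Theorem~\ref{EXIS}. Your approach instead computes the Revuz limit for $[M]$ directly from the L\'evy system and the PCAF correspondence, which is more self-contained and avoids treating \cite{JGY} as a black box.

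The one substantive point is exactly the one you flag: showing that stopping $A'$ at $S_M$ does not change the bivariate Revuz measure. Your heuristic (``the limits are local at $t=0$'') is the right intuition but is not by itself a proof, since the limit in \eqref{BRM} is an integral over all of $\Omega$ and $P^{\hat h\cdot m}(S_M\le t)$ need not be $o(t)$. A clean way to close it is to note that $A'$ has no jump at $S_M$, so by the strong Markov property $\mathcal U_{A'}^\alpha F=\mathcal U_{[M]}^\alpha F+P_{S_M}^\alpha\mathcal U_{A'}^\alpha F$; one then shows $\alpha(\hat h\cdot m)P_{S_M}^\alpha\mathcal U_{A'}^\alpha F\to0$, for instance by first taking $F$ with bounded potential and using $e^{-\alpha S_M}\to0$ a.s., or---closer to the paper's spirit---by transferring to $\hat m$ and quoting the corresponding lemma in \cite{JGY}. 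Once that is in place, the rest of your computation (splitting $A'=C+K$, identifying $\nu_C=1_{B^c}\Phi\cdot\nu$ via the canonical-measure identity and $\nu_K=\delta_y(dx)a(y)\mu_A(dy)$ via the PCAF correspondence) is exactly right.
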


\begin{proposition}\label{USC}
	Under the same conditions as in Proposition~\ref{SLMD3}   the additive functional $\bar{M}$ is $\sigma$-integrable and its  bivariate Revu measure is
\begin{equation}\label{BMD}
	\nu_{\bar{M}}(dxdy)=(1_B+1_{B^c}\cdot \Phi)(x,y)\nu(dxdy)+\delta_y(dx)a(y)\mu_A(dy).
\end{equation}
In particular if $M_t=1_{\{t<S_M\}}$, i.e. $\Phi=0,a=0$, then the bivariate Revuz measure of $(1_{\{t<S_M\}})_{t\geq 0}$ is
\begin{equation}\label{BMD2}
	\nu_{S_M}(dxdy)=1_B(x,y)\nu(dxdy).
\end{equation}
\end{proposition}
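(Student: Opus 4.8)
The plan is to follow the pattern of the proof of Proposition~\ref{SLMD3} (and of \cite{JGY} and of Theorem~\ref{EXIS}), now with $\bar M=1-M$ in place of the Stieltjes logarithm $[M]$. First I would write $\bar M$ explicitly in terms of the data $\Phi,a,A,B$ of the decomposition \eqref{MFD}. Since $M\in\text{MF}_+$ is decreasing, hence of bounded variation, I split $dM_s$ into its continuous part $-M_s\,a(X_s)\,dA_s$ and its jumps $M_s-M_{s-}$, which equal $-M_{s-}\Phi(X_{s-},X_s)$ for $s<S_M$, equal $-M_{S_M-}$ at $s=S_M$ (where $(X_{s-},X_s)\in B$), and vanish for $s>S_M$ because then $M_{s-}=0$. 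Writing $\Psi:=1_B+1_{B^c}\cdot\Phi$ --- a function on $E\times E$ vanishing on the diagonal $d$, equal to $\Phi$ on $B^c$ and to $1$ on $B$ --- this yields
\[
\bar M_t=\int_0^t M_s\,a(X_s)\,dA_s+\sum_{0<s\le t}M_{s-}\,\Psi(X_{s-},X_s),\qquad t\ge0 .
\]
The $\sigma$-integrability of $\bar M$ then follows as for $[M]$ in Proposition~\ref{SLMD3}, $\nu$ being $\sigma$-finite and $\mu_A$ smooth.

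Next I would fix a strictly positive $\gamma$-coexcessive quasi-continuous $\hat h$ and $F\in(\mathcal{B}\times\mathcal{B})_+$ and compute the short-time limit in \eqref{BRM} from the displayed formula for $\bar M$, using that $A$ is continuous (so $X_{s-}=X_s$ holds $dA_s$-a.e.):
\begin{align*}
\int F(x,y)\hat h(x)\,\nu_{\bar M}(dxdy)
&=\lim_{t\downarrow0}\frac1t E^{\hat h\cdot m}\int_0^t F_D(X_s)\,a(X_s)\,M_s\,dA_s\\
&\quad+\lim_{t\downarrow0}\frac1t E^{\hat h\cdot m}\sum_{0<s\le t}(F\Psi)(X_{s-},X_s)\,M_{s-},
\end{align*}
where $F_D(x):=F(x,x)$. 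Since $M_{s-}$ is left-continuous, hence predictable, the L\'evy system \eqref{LS} of $X$ (with $H$ continuous, so that $M_{s-}$ may be replaced by $M_s$ under $dH_s$) turns the second limit into $\lim_{t\downarrow0}\frac1t E^{\hat h\cdot m}\int_0^t M_s\,g(X_s)\,dH_s$ with $g(x)=\int(F\Psi)(x,y)\,N(x,dy)$. Thus both limits have the form $\lim_{t\downarrow0}\frac1t E^{\hat h\cdot m}\int_0^t M_s\,g(X_s)\,dK_s$ for a nonnegative Borel $g$ (bounded after a truncation-plus-monotone-convergence step as in the proof of Theorem~\ref{EXIS}) and a PCAF $K$, namely $(g,K)=(F_D\,a,\,A)$ and $(g,K)=\big(\int(F\Psi)(\cdot,y)\,N(\cdot,dy),\,H\big)$. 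Granting that the weight $M_s$ may be dropped in such a limit (see below), each one equals $\lim_{t\downarrow0}\frac1t E^{\hat h\cdot m}\int_0^t g(X_s)\,dK_s=\int g(x)\,\hat h(x)\,\mu_K(dx)$, so the first contributes $\int F(x,y)\hat h(x)\,\delta_y(dx)a(y)\mu_A(dy)$ and the second, by \eqref{CM}, contributes $\int(F\Psi)(x,y)\hat h(x)\,\nu(dxdy)$. Since $F$ and $\hat h$ are arbitrary, uniqueness (Theorem~\ref{EXIS}, Proposition~\ref{UNIQUE}) gives \eqref{BMD}. For the special case $M_t=1_{\{t<S_M\}}$ we have $\Phi\equiv0$, $a\equiv0$, hence $\Psi=1_B$ and $\bar M_t=1_{\{t\ge S_M\}}$, and \eqref{BMD} reduces to $\nu_{S_M}(dxdy)=1_B(x,y)\nu(dxdy)$, which is \eqref{BMD2}.

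The delicate point --- and the step I expect to be the main obstacle --- is justifying that the multiplicative weight may be removed from the short-time limit, i.e.\ that $\frac1t E^{\hat h\cdot m}\int_0^t(1-M_s)\,g(X_s)\,dK_s\to0$ as $t\downarrow0$. The naive domination $1-M_s\le1-M_t=\bar M_t$ only bounds the left-hand side by $\frac1t E^{\hat h\cdot m}\big[\bar M_t\,(g\ast K)_t\big]$, which a direct Cauchy--Schwarz estimate controls merely as $O(t)$ rather than $o(t)$; so one must exploit more carefully that $\bar M_t\downarrow0$, together with the monotonicity of $t\mapsto\frac1t\mathrm{e}^{-\gamma t}E^{\hat h\cdot m}\int_0^t(\cdot)\,dK_s$ as $t\downarrow0$ and the Fatou-lemma and Riemann-sum arguments already used in the proof of Theorem~\ref{EXIS} (and in \cite{JGY}), treating the killing time $S_M$ separately and invoking the strong Markov property there. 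Once this interchange is secured, what remains is the bookkeeping indicated above.
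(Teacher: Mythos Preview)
Your proposal is correct and follows essentially the same route as the paper, which gives no explicit proof beyond the sentence ``Similarly to \cite{JGY} and the proof of Theorem~\ref{EXIS}\dots''. Your explicit decomposition of $d\bar M_s$ into the continuous piece $M_s\,a(X_s)\,dA_s$ and the jump piece $M_{s-}\Psi(X_{s-},X_s)$ with $\Psi=1_B+1_{B^c}\Phi$ is exactly right, and applying the L\'evy system \eqref{LS} to the predictable integrand $M_{s-}$ is the intended mechanism.

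Where your write-up differs from the paper is in how the ``remove the weight $M_s$'' step is disposed of. The paper's implicit shortcut is to pass through the excessive measure $\hat m=\hat g\cdot m$: since $X$ and $\check X$ are in weak duality relative to $\hat m$, the computation of $\nu_{\bar M}^{\hat m}$ falls squarely inside the framework of \cite{JGY} (and \cite{SMJ}), where the analogous identities are already proved; one then recovers $\nu_{\bar M}$ via $\nu_{\bar M}^{\hat m}=\hat g\cdot\nu_{\bar M}$ from Theorem~\ref{EXIS}. This bypasses the direct interchange argument you sketch. Your direct approach is also viable; the cleanest way to finish it is to observe that $K^M_t:=\int_0^t M_s\,dK_s$ is a PCAF of $(X,M)$, write $\int_0^t \bar M_s\,d(g\ast K)_s=\int_0^t\big((g\ast K)_{t-r}\circ\theta_r\big)\,d\bar M_r$ by Fubini and the additivity of $g\ast K$, and then combine the strong Markov property with the fact that $r\mapsto \mathrm e^{-\gamma r}E^{\hat h\cdot m}(g\ast K)_r/r$ is monotone and bounded --- this is precisely the Riemann-sum/Fatou manoeuvre from the proof of Theorem~\ref{EXIS} that you allude to. Either way the bookkeeping you lay out then yields \eqref{BMD} and \eqref{BMD2}.
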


Since $B$  is disjoint to the diagonal $d$ of $E\times E$ it follows from  \eqref{CMSP}, \eqref{SLMD2}, \eqref{BMD}  and \eqref{BMD2} that
\begin{equation}\label{EQVMM}
	\nu_{\bar{M}}=\nu_{[M]}+\nu_{S_M}
\end{equation}
and
\[
	\nu^M+1_{E\times E\setminus d}\cdot \nu_{\bar{M}}=\nu.
	\]
	If in addition $M\in \text{MF}_{++}$ then
	\begin{equation}\label{MME}
		\nu_{\bar{M}}=\nu_{[M]}.
		\end{equation}
In fact  \eqref{EQVMM} still holds even if $M$ is only in MF.

\begin{proposition}\label{VMS}
    If $M\in \text{MF}$ then $\nu_{\bar{M}}=\nu_{[M]}+\nu_{S_M}$.
\end{proposition}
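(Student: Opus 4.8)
The plan is to bootstrap from the case $M\in\text{MF}_+$, for which the identity is already contained in \eqref{EQVMM}, obtained from the explicit formulas of Propositions~\ref{SLMD3} and \ref{USC}. The engine is the Stieltjes (Dol\'eans) equation
\[
	\bar{M}_t=\int_0^t M_{s-}\,d[M]_s+M_{S_M-}\,\mathbf{1}_{\{t\geq S_M\}},\qquad t\geq 0,
\]
valid for any $M\in\text{MF}$: on $[0,S_M)$ one has $dM_s=-M_{s-}\,d[M]_s$, and since $[M]$ is stopped at $S_M$ and has no jump there (Corollary~\ref{SLD}), the only additional motion of $-M$ is the killing jump $-M_{S_M-}$ at $S_M$. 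Both summands on the right are additive functionals of $(X,M)$, and since the defining limit \eqref{BRM} for $A\mapsto\nu_A$ is additive and monotone, this yields
\[
	\nu_{\bar{M}}=\nu_{M_-\ast[M]}+\nu_{K},\qquad K_t:=M_{S_M-}\,\mathbf{1}_{\{t\geq S_M\}} .
\]
Hence it suffices to prove (i) $\nu_{M_-\ast[M]}=\nu_{[M]}$ and (ii) $\nu_{K}=\nu_{S_M}$.

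Both reductions rest on a single principle: a predictable weight tending to $1$ at time $0$ leaves the bivariate Revuz measure unchanged. Writing $1-M_{s-}=\bar{M}_{s-}$, which is bounded by $1$ and tends to $0$ as $s\downarrow0$ on the set of permanence $E_M$ (where $M_0=1$), one shows
\[
	\lim_{t\downarrow0}\frac1t\,E^{\hat{h}\cdot m}\!\int_0^t F(X_{s-},X_s)\,\bar{M}_{s-}\,dA_s=0
\]
for every $\sigma$-integrable $A$ and $F\in b(\mathcal{B}\times\mathcal{B})_+$; applied with $A=[M]$ this gives (i). For (ii) one has $\nu_K(F)=\lim_{t\downarrow0}\frac1t E^{\hat{h}\cdot m}[\,M_{S_M-}F(X_{S_M-},X_{S_M})\mathbf{1}_{\{S_M\le t\}}\,]$, and on $\{S_M\le t\}$ the factor $M_{S_M-}$ increases to $M_0=1$ (on $E_M$) as $t\downarrow0$, so it may be dropped; what remains is exactly $\nu_{S_M}(F)$, the bivariate Revuz measure of the additive functional $(\mathbf{1}_{\{t<S_M\}})_{t\geq0}$, whose only activity is the jump at $S_M$.

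The crux is therefore the displayed vanishing statement. One direction, $\nu_{M_-\ast A}\le\nu_A$, is immediate since $0\le M_{s-}\le1$. For the quantitative bound I would fix $\delta>0$ and put $\tau_\delta:=\inf\{s:\bar{M}_s>\delta\}$, which is strictly positive $P^{\hat{h}\cdot m}$-a.e.\ on $E_M$; splitting the integral at $\tau_\delta$ and using $\bar{M}_{s-}\le\delta$ on $[0,\tau_\delta)$ gives $\int_0^t\bar{M}_{s-}\,dA_s\le\delta A_t+A_t\mathbf{1}_{\{\tau_\delta<t\}}$. The first term contributes at most $\delta\,\nu_A^{\hat{h}\cdot m}(1)$ in the limit, while the second tends to $0$ by the Cauchy--Schwarz inequality, using $P^{\hat{h}\cdot m}(\tau_\delta<t)\to0$ together with the estimate $t^{-2}E^{\hat{h}\cdot m}A_t^2=O(1)$ valid after the standard localization of a $\sigma$-integrable additive functional; letting $\delta\downarrow0$ finishes it. The one genuinely delicate point, where I expect to spend most care, is the tacit claim that $\nu_{\bar{M}},\nu_{[M]},\nu_{S_M}$ all ignore $E\setminus E_M$ — true because $[M]$ and $\mathbf{1}_{[0,S_M)}$ vanish off $E_M$ while the value of $\bar{M}$ there is its non-recorded time-$0$ jump. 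Alternatively this can be bypassed by first killing $X$ at $T_{E\setminus E_M}$, after which $M$ restricted to the subprocess lies in $\text{MF}_+$ and a direct comparison of the defining limits reduces the claim to \eqref{EQVMM}.
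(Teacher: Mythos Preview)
Your route is quite different from the paper's, and there is a genuine gap in it.

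The paper's argument is two lines and uses none of your machinery: since $X$ and $\check{X}$ are in weak duality with respect to $\hat{m}=\hat{g}\cdot m$ (set up in \S\ref{THWD}), Theorem~4.17(ii) of \cite{JGY} already gives the identity $\nu_{\bar{M}}^{\hat{m}}=\nu_{[M]}^{\hat{m}}+\nu_{S_M}^{\hat{m}}$ for the Revuz measures relative to the excessive measure $\hat{m}$; Theorem~\ref{EXIS} says $\nu_A^{\hat{m}}(dxdy)=\hat{g}(x)\,\nu_A(dxdy)$ with $\hat{g}$ strictly positive, so dividing through yields $\nu_{\bar{M}}=\nu_{[M]}+\nu_{S_M}$. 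The whole point of the construction of $\hat{g}$ and $\check{X}$ in \S\ref{THWD} is precisely to import such classical duality results rather than reprove them, and your Stieltjes-equation argument is essentially a reproof of the cited \cite{JGY} result from scratch.

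The gap: the estimate $t^{-2}E^{\hat{h}\cdot m}A_t^2=O(1)$ that you invoke for the Cauchy--Schwarz step is \emph{false} in general, even after localizing a $\sigma$-integrable additive functional to an integrable one. For one-dimensional Brownian motion with $\hat{h}\cdot m$ Lebesgue measure and $A_t=L_t^0$ the local time at $0$ (Revuz measure $\delta_0$, hence integrable), a direct computation with transition densities gives $E^m(L_t^0)^2=c\,t^{3/2}$, so $t^{-2}E^m A_t^2\to\infty$ as $t\downarrow0$. Passing to the $1$-subprocess to secure transience does not change the small-time asymptotics. Consequently your bound on $\frac1t E^{\hat{h}\cdot m}[A_t\mathbf{1}_{\{\tau_\delta<t\}}]$ does not follow. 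The ``predictable weight tending to $1$'' principle is correct, but making it rigorous requires a Markov-property argument at $\tau_\delta$ exploiting that $\hat{h}\cdot m$ is excessive and $A$ is additive, not a moment bound; carrying this out amounts to redoing the relevant portion of \cite{JGY}. Your alternative at the end (kill at $T_{E\setminus E_M}$ and compare defining limits) is closer in spirit but still needs the transfer between Revuz measures for $X$ and its subprocess, which again passes through the excessive reference measure $\hat{m}$---at which point you may as well cite \cite{JGY} directly, as the paper does.
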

\begin{proof}
    It follows from Theorem~4.17(ii) of  \cite{JGY}  that $\nu_{\bar{M}}^{\hat{g}\cdot m}=\nu_{[M]}^{\hat{g}\cdot m}+\nu_{S_M}^{\hat{g}\cdot m}$. Thus by \text{Theorem \ref{EXIS}} we may conclude that $\nu_{\bar{M}}=\nu_{[M]}+\nu_{S_M}$.  
\end{proof}

\section{Feynman-Kac formula}\label{FKF}

Throughout this section let $X$ be a right Markov process satisfying Hypothesis~\ref{HYP3}, \ref{HYP1} and \text{\ref{HYP5}} whose associated  semi-Dirichlet form $(\mathcal{E},\mathcal{F})$ on $L^2(E,m)$ is quasi-regular. When necessarily, we may always take its quasi-continuous version for any function in $\mathcal{F}$. Further fix $M\in \text{MF}$. Then as outlined in Appendix~\ref{DEFMF} we use $X^M$ or $(X,M)$ to denote the subprocess of $X$ killed by $M$. Clearly $X^M$ is a right Markov process. In fact if $X$ satisfies \text{Hypothesis \ref{HYP3}} then the subprocess $X^M$ also satisfies \text{Hypothesis \ref{HYP3}}.

\begin{lemma}\label{PQC}
    Let $X$ be above, $M\in \text{MF}$, $E_M$ the set of all the permanent points of $M$ and $m^*:=m|_{E_M}$. Denote the semigroup of the subprocess $(X,M)$ by $(Q_t)_{t\geq 0}$. Then $(Q_t)_{t\geq 0}$ can be extended to a strongly continuous contraction semigroup on $L^2(E_M,m^*):=\{u\in L^2(E,m):u|_{E_M^c}=0\}$.
\end{lemma}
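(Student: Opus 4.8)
The plan is to write the semigroup of $(X,M)$ as $Q_tu(x)=E^x[M_t\,u(X_t)]$ for $u\in b\mathcal{B}_+$ and to verify, in order: (i) each $Q_t$ extends to a contraction on $L^2(E,m)$ that leaves the closed subspace $L^2(E_M,m^*)$ invariant; (ii) the semigroup identity $Q_{t+s}=Q_tQ_s$; (iii) strong continuity of $(Q_t)_{t\ge0}$ on $L^2(E_M,m^*)$. Since $0\le M_t\le1$, we have $\abs{Q_tu}\le P_t\abs{u}$ pointwise for $u\in b\mathcal{B}$, so $\|Q_tu\|_m\le\|P_t\abs{u}\|_m\le\|u\|_m$ by Hypothesis~\ref{HYP3}, and $Q_t$ extends from $b\mathcal{B}$ to a contraction on $L^2(E,m)$. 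For the invariance, note that for $x\notin E_M$ one has $S_M=0$ $P^x$-a.s.\ (the Blumenthal zero--one law applied to the $\mathcal{M}_{0+}$-measurable event $\{S_M=0\}$, together with the exactness of $M$); hence, $M$ being decreasing, $M_t=0$ for every $t>0$ $P^x$-a.s., so $Q_tu(x)=0$ for all $t>0$ when $x\in E_M^c$, and since $Q_0=\mathrm{id}$ we get $Q_t\big(L^2(E_M,m^*)\big)\subset L^2(E_M,m^*)$. Step (ii) is immediate from $M_{t+s}=M_t\cdot(M_s\circ\theta_t)$ and the Markov property of $X$, exactly as in the classical case.

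For (iii) I would start from the Hilbert-space identity
\[
\|Q_tu-u\|_m^2=\|Q_tu\|_m^2-2(Q_tu,u)_m+\|u\|_m^2\le 2\|u\|_m^2-2(Q_tu,u)_m,
\]
which reduces strong continuity to proving $\lim_{t\downarrow0}(Q_tu,u)_m=(u,u)_m$ for $u$ in a dense subset of $L^2(E_M,m^*)$, the contraction property then propagating it to all of $L^2(E_M,m^*)$. By density of bounded functions and linearity we may take $0\le u\le C$. Since $(P_t-Q_t)u\ge0$ and $u\ge0$, $(Q_tu,u)_m\le(P_tu,u)_m$, and $(P_tu,u)_m\to(u,u)_m$ as $t\downarrow0$ by Hypothesis~\ref{HYP3}; hence $\limsup_{t\downarrow0}(Q_tu,u)_m\le(u,u)_m$. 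For the matching lower bound, Fatou's lemma gives $\liminf_{t\downarrow0}(Q_tu,u)_m\ge\int\big(\liminf_{t\downarrow0}Q_tu(x)\big)u(x)\,m(dx)$ and, by Fatou again, $\liminf_{t\downarrow0}Q_tu(x)=\liminf_{t\downarrow0}E^x[M_tu(X_t)]\ge E^x\big[\liminf_{t\downarrow0}M_tu(X_t)\big]$; for $x\in E_M$ the decreasing limit $M_{0+}$ equals $1$ $P^x$-a.s.\ (exactness and permanence of $x$), so this equals $E^x\big[\liminf_{t\downarrow0}u(X_t)\big]$. It thus suffices that $E^x\big[\liminf_{t\downarrow0}u(X_t)\big]\ge u(x)$ for $m$-a.e.\ $x$, which holds whenever $t\mapsto u(X_t)$ is $P^x$-a.s.\ right continuous; this is the case when $u$ is the restriction to $E_M$ of a bounded $X^M$-excessive function, e.g.\ $u=\alpha U^M_\alpha g$ with $g\in b\mathcal{B}_+$, where $U^M_\alpha g(x):=E^x\int_0^\infty e^{-\alpha s}M_sg(X_s)\,ds$. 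Taking such functions as the dense test class completes (iii).

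The hard part is precisely this last step. Because $m$ need not be $X$-excessive, one cannot invoke $m$-potential theory directly to obtain a.e.\ right continuity of $t\mapsto u(X_t)$ for a generic $u\in L^2(E,m)$, so the argument has to be run on a dense class of finely continuous, resolvent-image functions and then extended by the contraction property; this in turn requires that $\bigcup_{\alpha>0}\mathrm{range}(U^M_\alpha)$ be dense in $L^2(E_M,m^*)$. That density is where Hypothesis~\ref{HYP3} is genuinely used: from $0\le\alpha U^M_\alpha u\le\alpha U_\alpha u$, with $U_\alpha$ the $\alpha$-resolvent of $X$ and $\alpha U_\alpha u\to u$ strongly in $L^2(E,m)$, together with the quasi-regularity and quasi-continuity machinery recalled in the appendix, one obtains the required density. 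All remaining points are routine.
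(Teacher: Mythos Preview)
Your steps (i), (ii) and the Hilbert-space reduction $\|Q_tu-u\|_m^2\le 2\|u\|_m^2-2(Q_tu,u)_m$ in (iii) are correct and agree with the paper. The divergence is in the dense test class on which you establish $(Q_tu,u)_m\to(u,u)_m$. The paper takes $\mathcal{C}=\{U^1f:f\in bL^2_+(E,m)\}$, which is dense in $L^2_+(E,m)$ \emph{directly} by Hypothesis~\ref{HYP3}; for $u=U^1f$ it then localizes via an $\mathcal{E}$-nest $\{F_n\}$ with $m(F_n)<\infty$, setting $u_n:=u-P^1_{F_n^c}u$, so that $u_n$ is bounded, quasi-continuous, supported on $F_n$, and hence in $bL^1(E,m)$. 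Right continuity of $t\mapsto u_n(X_t)$ together with $M_{0+}=1$ on $E_M$ gives $Q_tu_n(x)\to u_n(x)$ pointwise, and dominated convergence with dominator $\|u\|_\infty\cdot u_n\in L^1$ yields $(Q_tu_n,u_n)_m\to(u_n,u_n)_m$.

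You instead choose the $X^M$-resolvent images $\alpha U^M_\alpha g$, and the density of this class in $L^2(E_M,m^*)$ is precisely what is not justified. The inequality $0\le \alpha U^M_\alpha u\le \alpha U_\alpha u$ together with $\alpha U_\alpha u\to u$ in $L^2$ is only an upper bound; it does not yield $\alpha U^M_\alpha u\to u$. To close the gap you would need pointwise convergence $\alpha U^M_\alpha u(x)\to u(x)$ (say for $u$ bounded and $X$-finely or quasi-continuous, which already uses the ``quasi-continuity machinery'' you allude to) and then a generalized dominated convergence argument sandwiching against $\alpha U_\alpha u$. That works, but it is not what you wrote, and more to the point it is a detour: once you have $Q_tu(x)\to u(x)$ for $m$-a.e.\ $x\in E_M$ when $u$ is bounded and $X$-quasi-continuous, your own $\limsup/\liminf$ argument applies \emph{directly} to such $u$, and density of bounded q.c.\ $L^2$-functions is immediate from quasi-regularity. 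Passing through $U^M_\alpha$ introduces an apparent circularity (density of the resolvent range is normally a \emph{consequence} of strong continuity) that the paper's choice of $X$-potentials avoids entirely.
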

\begin{proof}
    Note that $L^2(E_M,m^*)=pL^2(E_M,m^*)-pL^2(E_M,m^*)\subset L^2(E,m)$. For any $f\in pL^2(E_M,m^*)\subset L^2(E,m)$ and $x\in E_M$ we have
    \[
        Q_tf(x)=E^x(f(X_t)M_t)\leq E^x(f(X_t))=P_tf(x)\in L^2(E,m).
    \]
Clearly $Q_tf(x)=0$ for any $x\in E_M^c$. Hence $Q_tf\in L^2(E_M,m^*)$. Moreover the semigroup property of $(Q_t)_{t\geq 0}$, i.e. $Q_tQ_s=Q_{t+s}$ for any $t,s\geq 0$, is apparent. For any $f\in L^2(E_M,m^*)$ and $x\in E_M$ it follows that
    \[
    	|Q_tf(x)|= |E^x(f(X_t)M_t)|\leq E^x(|f|(X_t))=P_t|f|(x)
    	\]
and hence
    \[
        \int |Q_tf(x)|^2 m^*(dx)\leq \int (P_t|f|(x))^2 m^*(dx)\leq ||f||^2_{L^2(E_M,m^*)}.
    \]
    This is the contraction property of $(Q_t)_{t\geq 0}$. At last  we claim that $(Q_t)_{t\geq 0}$ is strongly continuous on $L^2(E_M,m^*)$.  Since $(Q_t)$ is contractive on $L^2(E,m)$ we only need to prove the strongly continuous property of $(Q_t)_{t\geq 0}$ on a dense subset of $L_+^2(E,m)$ with respect to $L^2$-norm. Set
    \[
    	\mathcal{C}:=\{U^1f: f\in bL^2_+(E,m) \}\]
  where $U^1$ is the $1$-potential of $X$. Clearly $\mathcal{C}$ is dense in $L_+^2(E,m)$ with respect to $L^2$-norm. Choose an $\mathcal{E}$-nest $\{F_n\}$ such that $m(F_n)<\infty$ for any $n\geq 1$, which may be constructed by a q.e. strictly positive and q.c function $g\in\mathcal{F}$, say $F_n:=\{g\geq \frac{1}{n}\}$.  For any $u=U^1f\in \mathcal{C}\subset \mathcal{F}$ define
  \[
  		u_n:=u-P^1_{F_n^c}u\in \mathcal{F}
  		\]
 where $P^1_{F^c_n}$ is the balayage operator and it follows that
    \[
        u_n(x)=E^x\int_0^{T_{F_n^c}}e^{-t}f(X_t)dt,\quad x\in E,
    \]
where $T_{F_n^c}$ is the hitting time of $F_n^c$. Clearly $u_n$ is quasi-continuous, $u_n|_{F_n^c}=0$, $u_n\uparrow u$ $m$-a.e. and hence $u_n\rightarrow u$ in $L^2(E,m)$. But $u$ is bounded and $m(F_n)<\infty$. Thus we can deduce that $u_n\in bL^1(E,m)$ and
 \[
 	|Q_tu_n(x)\cdot u_n(x)|\leq ||u||_\infty\cdot u_n\in L^1(E,m).\]
Since $u_n(X_\cdot)$ is right continuous it follows that for any $x\in E_M$,
    \[
        \lim_{t\downarrow 0} Q_t u_n(x)=\lim_{t\downarrow 0}E^x(u_n(X_t)M_t)=E^x\lim_{t\downarrow 0}(u_n(X_t)M_t)=u_n(x).
    \]
By the dominated convergence theorem and the contraction property of the semigroup $(Q_t)_{t\geq 0}$ we have
    \[\begin{aligned}
      \lim_{t\downarrow 0}\int  & (Q_tu_n(x)-u_n(x))^2m^*(dx)\\=&\lim_{t \downarrow 0}\int [(Q_tu_n(x))^2+(u_n(x))^2-2Q_tu_n(x)\cdot u_n(x)]m^*(dx) \\
                        \leq&\lim_{t \downarrow 0}\int [2(u_n(x))^2-2Q_tu_n(x)\cdot u_n(x)]m^*(dx)  \\
                        =&\int \lim_{t \downarrow 0}[2(u_n(x))^2-2Q_tu_n(x)\cdot u_n(x)]m^*(dx) \\
                        =&0.
    \end{aligned}\]
Since $u_n\rightarrow u$ in $L^2(E,m)$ it follows from the contraction property of $(Q_t)_{t\geq 0}$ again that $Q_t u \rightarrow u$ in $L^2(E,m)$ as $n\rightarrow \infty$.  
\end{proof}

Similarly we can prove that if $X$ satisfies \text{Hypothesis \ref{HYP5}} so does $X^M$.

\begin{lemma}\label{HYLM4}
 If $X$ satisfies \text{Hypothesis \ref{HYP5}}, i.e. \eqref{MT} holds, and $M\in \text{MF}$ then the subprocess $X^M$ also satisfies \text{Hypothesis \ref{HYP5}}.
\end{lemma}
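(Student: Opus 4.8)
The plan is to deduce \eqref{MT} for $X^{M}$ directly from \eqref{MT} for $X$, by exploiting that under its reference measure $m^{*}=m|_{E_{M}}$ the killed process is dominated by $X$ in two ways: it runs along the trajectories of $X$ but is stopped no later than $\zeta$, and $m^{*}\le m$. This is the same ``domination'' mechanism as in Lemma~\ref{PQC} (where $Q_{t}f\le P_{t}|f|$) and as in the explicit computation with the killing operators $k_{s}$ carried out for $\mathcal{U}^{\alpha}_{\delta,A}$ above.

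First I would fix the increasing sequence $\{K_{n}\}$ of compact subsets of $E$ witnessing Hypothesis~\ref{HYP5} for $X$ and propose the same sequence (or $K_{n}\cap E_{M}$) as the witnessing compacts for $X^{M}$; since $E_{M}$ is a nearly Borel, in fact Souslin, subset of the separable metrizable Radon space $E$, it is itself a separable metrizable Radon space, so the topological half of Hypothesis~\ref{HYP5} is inherited. The probabilistic half reduces to showing $P^{m^{*}}(\lim_{n}T^{M}_{E\setminus K_{n}}<\zeta^{M})=0$, where $T^{M}_{B}$ and $\zeta^{M}$ are the hitting times and lifetime of $X^{M}$. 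For this I would use the representation \eqref{POS} of $P^{x}_{M}$ in terms of the killing operators $k_{s}$ (exactly as in the computation of $\mathcal{U}^{\alpha}_{\delta,A}$): the key deterministic observation is that if the path killed at time $s$, namely $k_{s}\omega$, exits every $K_{n}$ before its lifetime $s\wedge\zeta(\omega)$, then a fortiori $\omega$ itself exits every $K_{n}$ before $\zeta(\omega)$, because $k_{s}\omega$ agrees with $\omega$ on $[0,s)$ and has the shorter lifetime $s\wedge\zeta\le\zeta$. Hence, for every $s$,
\[
\mathbf{1}_{\{\lim_{n}T_{E\setminus K_{n}}<\zeta\}}\circ k_{s}\ \le\ \mathbf{1}_{\{\lim_{n}T_{E\setminus K_{n}}<\zeta\}},
\]
with $T_{E\setminus K_{n}}$ on the right the hitting time of $X$, and the left side equal to $\mathbf{1}_{\{\lim_{n}T^{M}_{E\setminus K_{n}}<\zeta^{M}\}}$ after integrating against the killing law.

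Integrating the last inequality against the killing measure $d(-M_{s})$, whose total mass is $M_{0}\le 1$, gives $P^{x}_{M}(\lim_{n}T^{M}_{E\setminus K_{n}}<\zeta^{M})\le P^{x}(\lim_{n}T_{E\setminus K_{n}}<\zeta)$ for every $x\in E_{M}$; integrating this against $m|_{E_{M}}$ and using $m^{*}\le m$ yields
\[
P^{m^{*}}\bigl(\lim_{n}T^{M}_{E\setminus K_{n}}<\zeta^{M}\bigr)\ \le\ P^{m}\bigl(\lim_{n}T_{E\setminus K_{n}}<\zeta\bigr)=0
\]
by Hypothesis~\ref{HYP5} for $X$. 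This is \eqref{MT} for $X^{M}$, so $X^{M}$ satisfies Hypothesis~\ref{HYP5}.

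I do not expect a genuine obstacle: the estimate is short, and the two points that demand care are purely bookkeeping — first, checking that $E_{M}$ (and the compacts $K_{n}\cap E_{M}$) furnish an admissible choice of state space and compacts, and second, translating the event $\{\lim_{n}T^{M}_{E\setminus K_{n}}<\zeta^{M}\}$ for the killed process into the corresponding event for $X$ via the operators $k_{s}$. Both are of the same elementary flavour as the domination argument underlying Lemma~\ref{PQC}, which is why the statement can be disposed of ``similarly''.
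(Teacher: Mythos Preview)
Your argument is essentially the paper's: the same pathwise domination $\mathbf{1}_{\{\lim_n T_{E\setminus K_n}<\zeta\}}\circ k_s \le \mathbf{1}_{\{\lim_n T_{E\setminus K_n}<\zeta\}}$, integrated first against $d(-M_s)$ and then against $m$. The one organizational difference is that the paper first reduces to $M\in\text{MF}_+$ (hence $E_M=E$ q.e.) by factoring the killing as $T_{E_M^c}$ followed by an $\text{MF}_+$-killing and invoking Theorem~5.10 of \cite{PJF} for the first step; this sidesteps having to check that $K_n\cap E_M$ are genuinely compact in $E_M$, a point you flag as bookkeeping but which is not automatic when $E_M$ is only finely open.
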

\begin{proof}
    Note that the killing transform by $M$ can be completed in two steps: killing $X$ by a hitting time $T_{E_M^c}$ firstly and killing then by a multiplicative functional in $\text{MF}_+$. The first step has been discussed in \text{Theorem 5.10} of  \cite{PJF}. Hence we only need to prove it for $M\in \text{MF}_+$. To this end let $\{K_n:n\geq 1\}$ be a sequence of subsets of $E$ satisfying \text{Hypothesis \ref{HYP5}} for $X$.
 We can write $X^M=(\Omega, \mathcal{M},\mathcal{M}_t,X_t,\theta_t, Q^x)$ where $Q^x$ is defined by \eqref{POS}. Clearly for any $t\geq 0$ we have
\[
	\{\omega\in\Omega:\lim_{n\rightarrow \infty}T_{E\setminus K_n(k_t \omega)}<\zeta(k_t \omega)\}\subset \{\omega\in\Omega:\lim_{n\rightarrow \infty}T_{E\setminus K_n(\omega)}<\zeta( \omega)\}
\]
where $k_t$ is the killing operator on $\Omega$. Therefore it follows that
\[\begin{aligned}
	Q^m(\lim_{n\rightarrow \infty}T_{E\setminus K_n}<\zeta)&=E^m\int_0^\infty (\lim_{n\rightarrow \infty}T_{E\setminus K_n}<\zeta)\circ k_t d(-M_t) \\
	&\leq E^m\int_0^\infty (\lim_{n\rightarrow \infty}T_{E\setminus K_n}<\zeta )d(-M_t)\\
	&=E^m(\lim_{n\rightarrow \infty}T_{E\setminus K_n}<\zeta)\\
	&=0.
\end{aligned}\]
That completes the proof.
 \end{proof}

In the rest of this section we shall discuss the sector condition. It will be outlined that under some mild condition, say \eqref{MCM}, $X^M$ still satisfies the sector condition and this condition is verified in Example~\ref{EMAP} for the typical pure-jump semi-Dirichlet forms and in Example~\ref{EXA2} for the multidimensional diffusion processes with jumps. In particular it is possible to characterize the associated semi-Dirichlet form of $X^M$.

First we assume that $M\in \text{MF}_{++}$. Then $E_M=E, m^*=m$ and $M$ satisfies \eqref{MFD}. Moreover it follows from \eqref{SLMD4} and \eqref{MME} that
 \[
\nu_{\bar{M}}(dxdy)=\nu_{[M]}(dxdy)= \Phi(x,y)\cdot \nu(dx dy)+\delta_y(dx)a(y)\mu_A(dy).
\]
Recall that $(P_t)_{t\geq 0}, (U^q)_{q\geq 0}$ (resp. $(Q_t)_{t\geq o},(V^\alpha)_{\alpha\geq 0}$) are the semigroup and resolvent of $X$ (resp. $X^M$). Since $M\in \text{MF}_{++}$ we have
\begin{equation}\label{UVF}
	U^qf=V^qf+U^q_{[M]}V^qf,
\end{equation}
\begin{equation}\label{UVF2}
    U^p_{[M]}f=U^{p+q}_{[M]}f+qU^{p+q}U^p_{[M]}f,
\end{equation}
for any $f\geq 0,p,q>0$ where
\[
	U^q_{[M]}V^qf(x):=E^x\int_0^\infty \text{e}^{-qt}V^qf(X_t)d[M]_t,\quad x\in E.
	\]
The following lemma is an extension of Lemma~3.7 of  \cite{JGY} but the idea of proof is different.

\begin{lemma}\label{UFV}
	It holds that
	\[
	V^1(pL^2(E,m))\subset \mathcal{F}
	\]
	 and for any $u\in \mathcal{F}, g\in pL^2(E,m)$,
\begin{equation}
	(u,g)_m=\mathcal{E}_\alpha (u, V^\alpha g)+\nu_{[M]}(\tilde{u}\otimes V^\alpha g)
\end{equation}
where $\tilde{u}\otimes V^\alpha g(x,y):=\tilde{u}(x)V^\alpha g(y)$ for any $x,y\in E$.
\end{lemma}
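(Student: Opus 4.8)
The plan is to run the argument through the resolvent identities \eqref{UVF} and \eqref{UVF2} (recall $M\in\text{MF}_{++}$, so both are available) together with the bivariate Revuz formula \eqref{BRF} applied to $[M]$, which is $\sigma$-integrable by Proposition~\ref{SLMD3}. Throughout set $h:=V^1g\in pL^2(E,m)$ (Lemma~\ref{PQC}) and $w:=U^1_{[M]}h=U^1_{[M]}V^1g$; since the $1$-potential $U^1$ of $X$ coincides with the resolvent $G_1$ of $(\mathcal E,\mathcal F)$ on $L^2(E,m)$, \eqref{UVF} at $q=1$ gives $w=G_1g-V^1g\in L^2(E,m)$, $w\ge 0$, and $w=\mathcal U^1_{[M]}(1\otimes h)$.

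First I would prove $V^1g\in\mathcal F$ using the resolvent criterion: $V^1g\in\mathcal F$ iff $V^1g\in L^2(E,m)$ and $\sup_{\beta>0}\mathcal E^{(\beta)}_1(V^1g,V^1g)<\infty$, where $\mathcal E^{(\beta)}_1(v,v):=\beta(v,\,v-\beta G_{1+\beta}v)_m\ge 0$ is the approximating form of $\mathcal E_1$. Combining \eqref{UVF} at $q=1$, the resolvent equation $G_{1+\beta}G_1=\beta^{-1}(G_1-G_{1+\beta})$, and \eqref{UVF2} at $p=1,\,q=\beta$ (which gives $G_{1+\beta}w=\beta^{-1}(w-U^{1+\beta}_{[M]}h)$), one computes
\[
V^1g-\beta G_{1+\beta}V^1g=G_{1+\beta}g-U^{1+\beta}_{[M]}h,
\]
so that, since $U^{1+\beta}_{[M]}h\ge 0$ and $\|\beta G_{1+\beta}g\|_{L^2}\le\|g\|_{L^2}$,
\[
0\le\mathcal E^{(\beta)}_1(V^1g,V^1g)=(V^1g,\,\beta G_{1+\beta}g)_m-(V^1g,\,\beta U^{1+\beta}_{[M]}h)_m\le(V^1g,\,\beta G_{1+\beta}g)_m\le\|V^1g\|_{L^2}\|g\|_{L^2}
\]
uniformly in $\beta$. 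Hence $V^1g\in\mathcal F$, which proves $V^1(pL^2(E,m))\subset\mathcal F$; consequently $w=G_1g-V^1g\in\mathcal F$ as well.

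Next I would establish $\mathcal E_1(u,w)=\nu_{[M]}(\tilde u\otimes h)$ for all $u\in\mathcal F$. For $\lambda\ge 1$ and $k\in L^2(E,m)\cap\mathcal B_+$, the left-inversion property $\mathcal E_\lambda(\hat G_\lambda k,v)=(k,v)_m$ of the co-resolvent (with $v=w\in\mathcal F$), the co-resolvent equation, and \eqref{BRF} at $\alpha=1$ applied to $w=\mathcal U^1_{[M]}(1\otimes h)$ give, after a short manipulation, $\mathcal E_1(\hat G_\lambda k,w)=\int\widetilde{\hat G_\lambda k}(x)\,h(y)\,\nu_{[M]}(dxdy)=\nu_{[M]}(\widetilde{\hat G_\lambda k}\otimes h)$. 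The linear span of $\{\hat G_\lambda k:\lambda\ge 1,\,k\in L^2(E,m)\cap\mathcal B_+\}$ is $\tilde{\mathcal E}_1$-dense in $\mathcal F$, and $u\mapsto\mathcal E_1(u,w)$ is $\tilde{\mathcal E}_1$-continuous by the sector condition; so it remains to show that $u\mapsto\nu_{[M]}(\tilde u\otimes h)$ extends to a $\tilde{\mathcal E}_1$-continuous functional on $\mathcal F$, equivalently that the positive measure $\mu_h(dx):=\int_E h(y)\,\nu_{[M]}(dx,dy)$ is of finite energy integral for $\mathcal E_1$: it charges no $m$-polar set because $\nu_{[M]}^1=\lambda_{[M]}$ does not, and the bound $\int\tilde u\,d\mu_h\le K\,\tilde{\mathcal E}_1(w,w)^{1/2}\tilde{\mathcal E}_1(u,u)^{1/2}$ on the above cone (immediate from the formula just derived and the sector condition) passes to all of $\mathcal F$ by the usual Fatou/approximation argument for such measures. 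I expect this last point to be the main technical obstacle, since it is where duality — used in the corresponding result of \cite{JGY} — has to be replaced by the bivariate Revuz machinery.

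Finally, for $u\in\mathcal F$ and $g\in pL^2(E,m)$, using $U^1=G_1$, \eqref{UVF} at $q=1$, bilinearity of $\mathcal E_1$, and the previous step with $h=V^1g$,
\[
(u,g)_m=\mathcal E_1(u,U^1g)=\mathcal E_1(u,V^1g)+\mathcal E_1(u,U^1_{[M]}V^1g)=\mathcal E_1(u,V^1g)+\nu_{[M]}(\tilde u\otimes V^1g),
\]
which is the assertion. The same reasoning goes through with $1$ replaced by any fixed $\alpha\ge 1$.
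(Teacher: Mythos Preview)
Your proof of $V^1g\in\mathcal F$ via the approximating form is essentially the paper's own computation (their $\beta(w,w-\beta U^{\beta+1}w)_m\le\|w\|_{L^2}\|f\|_{L^2}$), just organized slightly differently.

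Where you genuinely diverge is in the identity $\mathcal E_1(u,U^1_{[M]}V^1g)=\nu_{[M]}(\tilde u\otimes V^1g)$. The paper does \emph{not} use the bivariate formula \eqref{BRF} here. Instead it invokes the explicit decomposition \eqref{BMD3} of $[M]$ to split $U^\alpha_{[M]}V^\alpha f$ into a continuous piece $U^\alpha_A(V^\alpha f\cdot a)$ and a jump piece which, via the L\'evy system, is rewritten as $U^\alpha_H(N(\Phi\cdot V^\alpha f))$. Each piece is the $\alpha$-potential of a PCAF and is shown to lie in $\mathcal F$ (being $\alpha$-excessive and dominated by $U^\alpha_{[M]}V^\alpha f\in\mathcal F$, via Theorem~2.16 of \cite{MOR}); then Lemma~\ref{RFP} is applied twice and the two contributions are recombined into $\nu_{[M]}$ using \eqref{SLMD4}. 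No density argument is needed: Lemma~\ref{RFP} already gives the formula for all $u\in\mathcal F$.

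Your route---prove the identity on $\{\hat G_\lambda k:k\ge 0\}$ using \eqref{BRF} and \eqref{UVF2}, then extend by showing $\mu_h(dx)=\int h(y)\,\nu_{[M]}(dx,dy)$ has finite $1$-energy---is correct and more intrinsic: it never unpacks the structure of $M$ or uses the L\'evy system. Your ``short manipulation'' checks out (combine $\mathcal E_\lambda(\hat G_\lambda k,w)=(k,w)_m$ with $w=U^\lambda_{[M]}h+(\lambda-1)G_\lambda w$ and \eqref{BRF} at level $\lambda$). The Fatou step also goes through: for $u\in\mathcal F$, $u\ge 0$, the functions $u_n:=n\hat G_{n+1}u$ belong to the cone, converge to $u$ in $\tilde{\mathcal E}_1$, and (along a subsequence) q.e., so $\int\tilde u\,d\mu_h\le\liminf\int\tilde u_n\,d\mu_h\le K\,\tilde{\mathcal E}_1(w,w)^{1/2}\tilde{\mathcal E}_1(u,u)^{1/2}$. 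What your approach buys is independence from the decomposition of $M$; what the paper's approach buys is that the key step is a direct citation of Lemma~\ref{RFP} with no limiting argument. Your final restriction to $\alpha\ge 1$ is unnecessary; the same computations work for any $\alpha>0$.
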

\begin{proof}
	Let $f\in pL^2(E,m)$ and we claim that $w=V^1f\in \mathcal{F}$. In fact it follows from \eqref{UVF} and \eqref{UVF2} that
	\[\begin{aligned}
	\beta(w, w-&\beta U^{\beta+1}w)_m\\= &\beta (w, U^1f-U^1_{[M]}V^1f-\beta U^{\beta+1}(U^1f-U^1_{[M]}V^1f))_m \\
	=&\beta (w, U^1f-U^1_{[M]}V^1f-\beta U^{\beta+1}(U^1f-U^1_{[M]}V^1f))_m\\
	=&\beta (w, U^1f-\beta U^{\beta+1}U^1f+\beta U^{\beta+1}U^1_{[M]}V^1f-U^1_{[M]}V^1f)_m\\
	=&\beta (w, U^{\beta+1}f-U^{\beta+1}_{[M]}V^1f)_m\\
	\leq& \beta (w, U^{\beta+1}f)_m \\
	\leq& ||w||_{L^2}\cdot ||f||_{L^2}.
	\end{aligned}\]	
Hence $\lim_{\beta\rightarrow \infty}\beta(w, w-\beta U^{\beta+1}w)_m<\infty$, in other words, $w\in \mathcal{F}$. Similarly we can deduce that $V^\alpha f=U^\alpha f-U^\alpha_{[M]}V^\alpha f\in \mathcal{F}$ whereas $U^\alpha f\in \mathcal{F}$. Thus $U^\alpha_{[M]}V^\alpha f\in \mathcal{F}$. Since $[M]$ has a decomposition \eqref{BMD3} with $S_M=\zeta$ it follows that
\[\begin{aligned}
	U^\alpha_{[M]}V&^\alpha f(x)\\=& E^x\sum_{s\leq t}\text{e}^{-\alpha s}V^\alpha f(X_s)\Phi(X_{s-},X_s)+E^x\int_0^\infty \text{e}^{-\alpha s}V^\alpha f(X_s)a(X_s)dA_s\\
	=& E^x\sum_{s\leq t}\text{e}^{-\alpha s}V^\alpha f(X_s)\Phi(X_{s-},X_s)+U^\alpha_A(V^\alpha f\cdot a)(x).
\end{aligned}\]
Clearly $U^\alpha_A(V^\alpha f\cdot a)$ is $\alpha$-excessive. Then from the fact
\[
	U^\alpha_A(V^\alpha f\cdot a)\leq U^\alpha_{[M]}V^\alpha f\in \mathcal{F}
	\]
	 and Theorem~2.16 of \cite{MOR} we obtain that $U^\alpha_A(V^\alpha f\cdot a)\in \mathcal{F}$. It follows from \text{Lemma \ref{RFP}} that for any $u\in \mathcal{F}$,
\begin{equation}\label{RFPF}
	\mathcal{E}_\alpha (u, U^\alpha_A(V^\alpha f\cdot a))=\mu_A(\tilde{u}\cdot V^\alpha f\cdot a)
\end{equation}
where $\mu_A$ is the smooth measure associated with $A$.
On the other hand it follows from \eqref{LS} that
\[\begin{aligned}
	E^x&\sum_{s\leq t}\text{e}^{-\alpha s}V^\alpha f(X_s)\Phi(X_{s-},X_s)\\&=E^x\int_0^t \text{e}^{-\alpha s} N(\Phi \cdot V^\alpha f)(X_s)dH_s\\&=U_H^\alpha (N(\Phi \cdot V^\alpha f))
\end{aligned}\]
where $(\Phi \cdot V^\alpha f)(x,y):=\Phi(x,y)V^\alpha f(y)$ for $x,y\in E$.  Similarly we have $U_H^\alpha (N(\Phi \cdot V^\alpha f))\in \mathcal{F}$ and
\begin{equation}\label{RFPF2}
\begin{aligned}
\mathcal{E}_\alpha (u, U_H^\alpha (N(\Phi \cdot V^\alpha f)))&=\mu_H(\tilde{u}\cdot N(\Phi \cdot V^\alpha f))\\&=\int \tilde{u}(x)V^\alpha f(y) \Phi(x,y)\nu(dxdy).
\end{aligned}\end{equation}
Thus it follows from \eqref{UVF}, \eqref{RFPF} and  \eqref{RFPF2} that
\[\begin{aligned}
		(u,f)_m&=\mathcal{E}_\alpha (u, U^\alpha f)\\&=\mathcal{E}_\alpha (u, V^\alpha f)+\mathcal{E}_\alpha (u,U^\alpha_{[M]}V^\alpha f)\\
	&=	\mathcal{E}_\alpha (u, V^\alpha f)+\mu_A(\tilde{u}\cdot V^\alpha f\cdot a)+\int \tilde{u}(x)V^\alpha f(y) \Phi(x,y)\nu(dxdy)\\&=	\mathcal{E}_\alpha (u, V^\alpha f)+ \nu_{[M]}(\tilde{u}\otimes V^\alpha f) .
\end{aligned}\]
That completes the proof. 
\end{proof}

Our main results on the sector condition related to $X^M$ are as follows. Note that the lower bounded semi-Dirichlet forms are  introduced in Appendix~\ref{SDF}.

\begin{theorem}\label{FKT}
	Let $X$ be a right Markov process satisfying Hypothesis \ref{HYP3}, \ref{HYP1} and  \ref{HYP5}, $(\mathcal{E},\mathcal{F})$ its associated semi-Dirichlet form on $L^2(E,m)$ and $M\in \text{MF}_{++}$. Assume that there exist two  constants $c>\frac{1}{4}$ and $\lambda_0\geq 0$ such that
\begin{equation}\label{MCM}
	\mathcal{E}_{\lambda_0}(u,u)\geq c\int (u(x)-u(y))^2 \nu_{[M]}(dxdy)	
\end{equation}
for any $u\in \mathcal{F}$. Then the subprocess $X^M=(X,M)$ of $X$ satisfies the sector condition and its associated semi-Dirichlet form $(\mathcal{E}^M,\mathcal{F}^M)$ are
\begin{equation}\label{EFMD}
\begin{aligned}
		\mathcal{F}^M&=\mathcal{F}\cap L^2(E,\nu^1_{[M]}+\nu^2_{[M]}),\\
		\mathcal{E}^M(u,v)&=\mathcal{E}(u,v)+\nu_{[M]}(u\otimes v) \quad u,v\in \mathcal{F}^M
\end{aligned}\end{equation}
where $\nu_{[M]}^1$ and $\nu_{[M]}^2$ are the left and right marginal measures of $\nu_{[M]}$. In particular $(\mathcal{E}^M,\mathcal{F}^M)$ is quasi-regular and $X^M$ is properly associated with it.
\end{theorem}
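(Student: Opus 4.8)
The plan is to identify the bilinear form $(\mathcal{E}^M,\mathcal{F}^M)$ on the candidate space, then verify the sector condition for it, and finally match it with the resolvent $(V^\alpha)$ of $X^M$. First I would set $\mathcal{G}:=\mathcal{F}\cap L^2(E,\nu^1_{[M]}+\nu^2_{[M]})$ and, for $u,v\in\mathcal{G}$, define
\[
\mathcal{A}(u,v):=\mathcal{E}(u,v)+\nu_{[M]}(\tilde u\otimes\tilde v).
\]
The first task is to see that $\mathcal{A}$ is well defined and closed. Note that $\nu_{[M]}(\tilde u\otimes\tilde v)=\int \tilde u(x)\tilde v(y)\,\nu_{[M]}(dxdy)$ is controlled by $\|u\|_{L^2(\nu^1_{[M]})}\|v\|_{L^2(\nu^2_{[M]})}$, so it is finite on $\mathcal{G}\times\mathcal{G}$; closedness of $(\mathcal{A},\mathcal{G})$ with respect to $\mathcal{A}_{\lambda_0+1}^{1/2}$ follows because $\mathcal{F}$ is closed and $L^2$-convergence along $\nu^1_{[M]}+\nu^2_{[M]}$ is coercive in the obvious norm; here one uses that a $\mathcal{G}$-Cauchy sequence has a subsequence converging q.e., hence $\nu_{[M]}$-a.e. since $\nu_{[M]}$ charges no $m$-bipolar sets (Theorem~\ref{EXIS}).

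The heart of the matter is the sector condition for $(\mathcal{A},\mathcal{G})$. Write $\nu_{[M]}(\tilde u\otimes\tilde v)=\frac12\nu_{[M]}\big((\tilde u\otimes 1)(\tilde v\otimes 1)+(1\otimes\tilde u)(1\otimes\tilde v)-(\tilde u(x)-\tilde u(y))(\tilde v(x)-\tilde v(y))\big)$ — more precisely, split the perturbation into its symmetric diagonal part $\frac12\!\int(u(x)-u(y))(v(x)-v(y))\nu_{[M]}$ plus lower order terms involving the marginals. The symmetric jump part satisfies an obvious Cauchy--Schwarz sector bound with constant $1$ against itself. For the antisymmetric remainder, the hypothesis \eqref{MCM}, $\mathcal{E}_{\lambda_0}(u,u)\ge c\int(u(x)-u(y))^2\nu_{[M]}$ with $c>\tfrac14$, is exactly what lets the jump energy be absorbed by $\mathcal{E}_{\lambda_0}$: writing $\mathcal{A}_{\lambda}(u,u)=\mathcal{E}_{\lambda}(u,u)+\nu_{[M]}(\tilde u\otimes\tilde u)$ and estimating the cross terms, one gets $\mathcal{A}_{\lambda}(u,u)\ge \delta\,\mathcal{E}_1(u,u)$ and $\mathcal{A}_{\lambda}(u,u)\ge \delta'\!\int(u(x)-u(y))^2\nu_{[M]}$ for some $\delta,\delta'>0$ once $\lambda$ is large, using $c>\tfrac14$ to keep both constants positive after completing the square. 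The sector estimate $|\mathcal{A}_\lambda(u,v)|\le K'\mathcal{A}_\lambda(u,u)^{1/2}\mathcal{A}_\lambda(v,v)^{1/2}$ then follows by combining the sector condition for $\mathcal{E}$ with these two equivalences. This absorption step — making the constant in the sector estimate finite by playing \eqref{MCM} against the completed square — is where I expect the real work, and the threshold $c>\tfrac14$ is dictated precisely by it.

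Finally I would show $(V^\alpha)$ is the resolvent of $(\mathcal{A},\mathcal{G})$. By Lemma~\ref{UFV}, $V^\alpha(pL^2(E,m))\subset\mathcal{F}$ and $(u,g)_m=\mathcal{E}_\alpha(u,V^\alpha g)+\nu_{[M]}(\tilde u\otimes V^\alpha g)$ for all $u\in\mathcal{F}$, $g\in pL^2$; taking $u=V^\alpha g$ and using \eqref{MCM} one checks $V^\alpha g\in L^2(\nu^1_{[M]}+\nu^2_{[M]})$, hence $V^\alpha g\in\mathcal{G}$, and the identity reads $(u,g)_m=\mathcal{A}_\alpha(u,V^\alpha g)$ for all $u\in\mathcal{G}$ by $\mathcal{E}_1$-density of $D(L)\cap\mathcal{G}$ (or of $\mathcal{F}$-approximants) in $\mathcal{G}$. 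This identifies $V^\alpha=(\alpha-L^M)^{-1}$ where $L^M$ is the generator of $(\mathcal{A},\mathcal{G})$, so $(\mathcal{A},\mathcal{G})$ is the semi-Dirichlet form of $X^M$; it is lower bounded, hence (after the shift by $\lambda_0$) a genuine semi-Dirichlet form, giving \eqref{EFMD} with $(\mathcal{E}^M,\mathcal{F}^M)=(\mathcal{A},\mathcal{G})$. Quasi-regularity of $(\mathcal{E}^M,\mathcal{F}^M)$ and proper association of $X^M$ then follow from Lemmas~\ref{PQC} and \ref{HYLM4}, which give Hypotheses~\ref{HYP3} and \ref{HYP5} for $X^M$, together with the sector condition just established and the general theory of \cite{PJF}, \cite{MOR}.
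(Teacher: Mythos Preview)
Your overall strategy matches the paper's: define the candidate form on $\mathcal{G}=\mathcal{F}\cap L^2(E,\nu^1_{[M]}+\nu^2_{[M]})$, use Lemma~\ref{UFV} to identify $V^\alpha$ as its resolvent, exploit \eqref{MCM} with $c>\tfrac14$ for the sector bound, and close with Lemmas~\ref{PQC}, \ref{HYLM4}. Two points, however, are not quite right as stated.

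First, the pair of inequalities you aim for in the sector step --- $\mathcal{A}_\lambda(u,u)\ge\delta\,\mathcal{E}_1(u,u)$ and $\mathcal{A}_\lambda(u,u)\ge\delta'\!\int(u(x)-u(y))^2\nu_{[M]}$ --- does \emph{not} directly control $|\nu_{[M]}(u\otimes v)|$, because that term is bounded by the \emph{marginal} norms $\|u\|_{L^2(\nu^1_{[M]})}\|v\|_{L^2(\nu^2_{[M]})}$, not by the jump energy. The paper's key estimate is instead
\[
\mathcal{E}^M_\lambda(u,u)\ \ge\ \Bigl((2c-\tfrac12)\wedge\tfrac12\Bigr)\int\bigl(u(x)^2+u(y)^2\bigr)\,\nu_{[M]}(dxdy),
\]
obtained by writing $u(x)^2+u(y)^2\ge\tfrac12(u(x)-u(y))^2$ and splitting $1=(1-2c)+(2c-\tfrac12)$ (this is exactly where $c>\tfrac14$ enters). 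From this both $\mathcal{E}_\lambda\lesssim\mathcal{E}^M_\lambda$ and $|\nu_{[M]}(u\otimes v)|\lesssim(\mathcal{E}^M_\lambda(u,u)\,\mathcal{E}^M_\lambda(v,v))^{1/2}$ follow immediately, and the sector constant is explicit. Your ``absorption after completing the square'' is the right instinct, but the target quantity is the marginal $L^2$ norm, not the jump energy.

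Second, ``lower bounded, hence a genuine semi-Dirichlet form'' skips two separate checks the paper makes explicitly: non-negativity comes from Lemma~\ref{LBTN} together with the contractivity of $(Q_t)$ on $L^2$ (Lemma~\ref{PQC}), and the semi-Dirichlet (Markov) property $(\mathcal{E}3)$ is verified directly from the sub-Markov property of $(V^\alpha)$. Neither is automatic from lower-boundedness plus the sector condition.
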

\begin{proof}
	First we assert that under the condition  \eqref{MCM} it holds that
 \[
 	V^1(L^2(E,m))\subset \mathcal{F}^M.
 	\]
  For any $\lambda>\lambda_0$ and $g\in pL^2(E,m)$ it follows from Lemma~\ref{UFV} that $u:=V^{\lambda} g\in \mathcal{F}$. On the other hand clearly $u(x)^2+u(y)^2\geq \frac{1}{2}(u(x)-u(y))^2$.
	Then if $c\leq \frac{1}{2}$ it follows from \eqref{MCM} that
\[\begin{aligned}
    (u,g)_m=&\mathcal{E}_{\lambda}(u,u)+\nu_{[M]}(u\otimes u)  \\
            =&\mathcal{E}_{\lambda}(u,u)-\frac{1}{2}\int (u(x)-u(y))^2 \nu_{[M]}(dxdy)\\&\quad +\frac{1}{2}\int (u(x)^2+u(y)^2)\nu_{[M]}(dxdy) \\
	=&\mathcal{E}_{\lambda}(u,u)-\frac{1}{2}\int (u(x)-u(y))^2 \nu_{[M]}(dxdy)\\&\quad+\left((1-2c)+(2c-\frac{1}{2})\right)\int (u(x)^2+u(y)^2)\nu_{[M]}(dxdy).
	\end{aligned}\]
Hence we have
	\[\begin{aligned}
    (u,g)_m       & \geq \mathcal{E}_{\lambda}(u,u)-\frac{1}{2}\int (u(x)-u(y))^2 \nu_{[M]}(dxdy)\\&\quad+(\frac{1}{2}-c)\int (u(x)-u(y))^2 \nu_{[M]}(dxdy))\\&\quad
			+(2c-\frac{1}{2})\int (u(x)^2+u(y)^2)\nu_{[M]}(dxdy) \\
	&=\mathcal{E}_{\lambda}(u,u)-c\int (u(x)-u(y))^2 \nu_{[M]}(dxdy)\\&\quad +(2c-\frac{1}{2})\int (u(x)^2+u(y)^2)\nu_{[M]}(dxdy)\\
	&\geq (2c-\frac{1}{2})\int (u(x)^2+u(y)^2)\nu_{[M]}(dxdy).
\end{aligned}\]
Similarly if $c>\frac{1}{2}$ we can deduce that
\[
	(u,g)_m\geq \frac{1}{2}\int (u(x)^2+u(y)^2)\nu_{[M]}(dxdy)
	\] and thus
\begin{equation}\label{MEL}
	\mathcal{E}^M_{\lambda}(u,u)\geq\left( (2c-\frac{1}{2})\wedge \frac{1}{2}\right)\int (u(x)^2+u(y)^2)\nu_{[M]}(dxdy).
\end{equation}
Note that \eqref{MEL} still holds for any $u\in\mathcal{F}^M$.
Moreover since $(V^\lambda g,g)_m<\infty$ it follows that $V^\lambda g\in L^2(E,\nu^1_{[M]}+\nu^2_{[M]})$ and in particular  \[V^1(L^2(E,m))=V^{\lambda}(L^2(E,m))\subset  \mathcal{F}^M.\] Hence $\mathcal{F}^M$ is dense in $L^2(E,m)$ and $\mathcal{E}^M_{\lambda_0}(u,u)\geq 0$ for any $u\in \mathcal{F}^M$.

Secondly we shall prove that $(\mathcal{E}^M,\mathcal{F}^M)$ is a lower bounded closed form.
Let $\lambda>\lambda_0$ be a constant and $\{u_n:n\geq 1\}\subset \mathcal{F}^M$ an $\tilde{\mathcal{E}}^M_\lambda$-Cauchy sequence. It follows from \eqref{MEL} that the sequence $\{u_n:n\geq 1\}$ ($\subset \mathcal{F}$) is also $L^2(E,\nu^1_{[M]}+\nu^2_{[M]})$-Cauchy. Thus there exists a function $u\in L^2(E,\nu^1_{[M]}+\nu^2_{[M]})$ such that $u_n\rightarrow u$ in $L^2(E,\nu^1_{[M]}+\nu^2_{[M]})$. In particular there exists a subsequence $\{u_{n_k}:k\geq 1\}$ of $\{u_n:n\geq 1\}$ such that $u_{n_k}\rightarrow u$, $\nu^1_{[M]}+\nu^2_{[M]}$-a.e. Since
\begin{equation}\label{UVM12}
    |\nu_{[M]}(v\otimes v)|\leq \frac{1}{2}\int (v(x))^2(\nu_{[M]}^1(dx)+\nu_{[M]}^2(dx))
\end{equation}
for any $v\in L^2(E,\nu^1_{[M]}+\nu^2_{[M]})$, it follows that $\nu_{[M]}((u_n-u_m)\otimes (u_n-u_m))\rightarrow 0$ and $\{u_n:n\geq 1\}$ is $\tilde{\mathcal{E}}_\lambda$-Cauchy. Therefore we can choose a $u'\in \mathcal{F}$ such that $\mathcal{E}_\lambda(u_n-u',u_n-u')\rightarrow 0$ as $n\rightarrow \infty$. In particular $\mathcal{E}_\lambda(u_{n_k}-u',u_{n_k}-u')\rightarrow 0$ as $k\rightarrow \infty$. Then there exists a subsequence of $\{u_{n_k}\}$, denoted by $\{u_l\}$, such that $u_l\rightarrow u'$ q.e. However $\nu^1_{[M]},\nu^2_{[M]}$ charge no $m$-polar sets and hence $u_l\rightarrow u', \nu^1_{[M]}+\nu^2_{[M]}$-a.e. It follows that $u=u'$. Moreover we can deduce that $u\in \mathcal{F}^M$ and $\mathcal{E}^M_\lambda(u_n-u,u_n-u)\rightarrow 0$ as $n\rightarrow \infty$. 

Now we claim that there exists a constant $K_\lambda >0 $ such that
\begin{equation}\label{EMSC}
    |\mathcal{E}^M_\lambda (u,v)|\leq K_\lambda\cdot \mathcal{E}^M_\lambda(u,u)^{\frac{1}{2}}\cdot \mathcal{E}^M_\lambda(v,v)^{\frac{1}{2}}
\end{equation}
for any $u,v\in \mathcal{F}^M$. In fact it follows from \eqref{MEL} and \eqref{UVM12} that
\[\begin{aligned}
	(1+\frac{1}{2c'})\mathcal{E}^M_\lambda(u,u)&\geq\mathcal{E}^M_\lambda(u,u)+\frac{1}{2}\int (u(x)^2+u(y)^2)\nu_{[M]}(dxdy)\\&\geq \mathcal{E}_\lambda(u,u)
\end{aligned}\]
where $c'=(2c-\frac{1}{2})\wedge \frac{1}{2}$.  Since $(\mathcal{E},\mathcal{F})$ satisfies the sector condition there exists a constant $K^1_\lambda >0$ such that
\[\begin{aligned}
    |\mathcal{E}_\lambda (u,v)|&\leq K^1_\lambda\cdot \mathcal{E}_\lambda(u,u)^{\frac{1}{2}}\cdot \mathcal{E}_\lambda(v,v)^{\frac{1}{2}}\\&\leq K^1_\lambda\cdot c''\cdot \mathcal{E}^M_\lambda(u,u)^{\frac{1}{2}}\cdot \mathcal{E}^M_\lambda(v,v)^{\frac{1}{2}}
\end{aligned}\]
where $c''=1+\frac{1}{2c'}$.
By Cauchy-Schwarz inequality and \eqref{MEL} we obtain
\[\begin{aligned}
	|\nu_{[M]}(u\otimes v)|&\leq (\int u(x)^2\nu_{[M]}(dxdy))^{\frac{1}{2}}\cdot (\int v(y)^2\nu_{[M]}(dxdy))^{\frac{1}{2}}\\&\leq \frac{1}{c'}\mathcal{E}^M_\lambda(u,u)^{\frac{1}{2}}\cdot \mathcal{E}^M_\lambda(v,v)^{\frac{1}{2}}.
\end{aligned}\]
Then \eqref{EMSC} holds with the parameter $K_\lambda:=K^1_\lambda\cdot c''+\frac{1}{c'}$.

Since we have proved that $(\mathcal{E}^M,\mathcal{F}^M)$ is a lower bounded closed form,  there exists a unique strongly continuous resolvent $(G_\alpha)_{\alpha\geq 0}$ such that
\[
	\mathcal{E}^M_\lambda(u, G_\lambda f)=(u,f)_m,\quad  u\in \mathcal{F}^M, f\in L^2(E,m), \lambda>\lambda_0.
\]
However from Lemma~\ref{UFV} we can see that
\[
	\mathcal{E}^M_\lambda(u, V^\lambda f)=(u,f)_m,\quad  u\in \mathcal{F}^M, f\in L^2(E,m), \lambda>\lambda_0.
\]
Hence $G_\alpha f=V^\alpha f$ for any $\alpha >0, f\in L^2(E,m)$. It follows from Lemma \ref{PQC} and \ref{LBTN} that $(\mathcal{E}^M,\mathcal{F}^M)$ is non-negative.

Finally we only need to prove that $(\mathcal{E}^M,\mathcal{F}^M)$ has the semi-Dirichlet property, or equivalently, for any $f\in L^2(E,m)$ such that $0\leq f\leq 1$ it follows that $0\leq \alpha V^\alpha f\leq 1$ for any $\alpha>0$. This fact is apparent because $(V^\alpha)_{\alpha>0}$ is the resolvent of $X^M$. Hence we have already proved that $(\mathcal{E}^M,\mathcal{F}^M)$ is a semi-Dirichlet form.  It follows from Lemma \ref{PQC} and \ref{HYLM4} that $X^M$ satisfies Hypothesis \ref{HYP3}, \ref{HYP1} and \ref{HYP5}. In particular the semi-Dirichlet form $(\mathcal{E}^M,\mathcal{F}^M)$ is quasi-regular and $X^M$ is properly associated with $(\mathcal{E}^M,\mathcal{F}^M)$.  
\end{proof}

\begin{remark}\label{RM2}
(1) If $M$ is continuous or $X$ is continuous, i.e. $\Phi\equiv 0$, then $M_t=\text{e}^{-A^*_t}$ where $A^*_t=\int_0^t a(X_s)dA_s\in \text{PCAF}$ and in particular \eqref{MCM} is satisfied. Let $\mu_{A^*}$ be the corresponding smooth measure of $A^*$. Then $\nu_{[M]}(dxdy)=\delta_x(dy)\mu_{A^*}(dx)$ and the associated semi-Dirichlet form $(\mathcal{E}^{A^*},\mathcal{F}^{A^*})$ of $(X,M)$ is
\[\begin{aligned}
	\mathcal{F}^{A^*}&= \mathcal{F}\cap L^2(E,\mu_{A^*}),  \\
	\mathcal{E}^{A^*}(u,v)&=\mathcal{E}(u,v)+\int u(x)v(x)\mu_{A^*}(dx)\quad  u,v\in \mathcal{F}^{A^*}.
\end{aligned}\]
Hence $(\mathcal{E}^{A^*},\mathcal{F}^{A^*})$ is exactly the perburbed Dirichlet form of $(\mathcal{E},\mathcal{F})$ by smooth measure $\mu_{A^*}$. This has been discussed in \S4.3 of  \cite{OY} for the cases that the smooth measure is Radon. The general cases in the context of the semi-Dirichlet forms are similar to those of the non-symmetric Dirichlet forms, see IV\S4(c) of  \cite{MR}. In \text{\S\ref{KAS}} we shall also make some characterizations to the smooth measures in perturbations.

(2) If $m$ is excessive, equivalently $X$ has a dual Markov process relative to $m$ or $(\mathcal{E},\mathcal{F})$ is a non-symmetric Dirichlet form on $L^2(E,m)$, then $\mathcal{E}$ has a Beurling-Deny type decomposition on the diagonal
\[
    \mathcal{E}(u,u)=\mathcal{E}^{(c)}(u,u)+\frac{1}{2}\int (u(x)-u(y))^2 \nu(dxdy)\quad  u\in \mathcal{F},
\]
where the non-negative form $\mathcal{E}^{(c)}$ is the continuous part of $X$ and $\nu$ is exactly the canonical measure of $X$. Hence the condition \eqref{MCM} is satisfied with the parameters $\lambda_0=0,c=\frac{1}{2}$ because $0\leq \Phi < 1$.

(3) For the general semi-Dirichlet forms Oshima also gives a decomposition (see Theorem~5.2.1 of  \cite{OY}) for the regular semi-Dirichlet form:
\begin{equation}\label{EQMEUV}
    \mathcal{E}(u,v)=\mathcal{E}^{(c)}(u,v)+\mathcal{E}^{(j)}(u,v)+\int u(x)v(x)k(dx),
\end{equation}
where $k$ is the killing measure of $X$ and the non-local part $\mathcal{E}^{(j)}$ of the decomposition  is given by:
\begin{equation}\label{EQMEJ}
\begin{aligned}
   \mathcal{E}^{(j)}(u,v):= &\frac{1}{2}\int (u(x)-u(y))(v(x)-v(y))\nu(dxdy)\\&-\frac{1}{2}\int (v(y)-v(x))u(x)(\nu(dxdy)-\nu(dydx))
\end{aligned}\end{equation}
for any $u,v\in \mathcal{F}\cap C_c(E)$. 
Since the canonical measure of $X^M$ is $\nu^M(dxdy)=(1-\Phi(x,y))\nu(dxdy)$, it holds that
\[\begin{aligned}
    \mathcal{E}^M(u,v)=&\mathcal{E}^{(c)}(u,v)+\frac{1}{2}\int (u(x)-u(y))(v(x)-v(y))\nu^M(dxdy) \\
                &-\frac{1}{2}\int (v(y)-v(x))u(x)(\nu^M(dxdy)-\nu^M(dydx))\\
                    &+\int u(x)v(x)(k+\lambda)(dx)
\end{aligned}\]
where $\mathcal{E}^{(c)}$ is the semi-strongly local part in the decomposition of $\mathcal{E}$, $k$ is the killing measure of $X$ and $\lambda(dx)=\nu^1_{[M]}(dx)$ is the Revuz measure of $[M]$ in \eqref{RM}. Hence roughly speaking, the killing transform by $M\in \text{MF}(X)$ is essentially to multiply the canonical measure of $X$ by $1-\Phi$ and to add the (left) Revuz measure of $[M]$ to its killing measure.
\end{remark}

Now let $(\mathcal{E}^{\alpha_0},\mathcal{F}^{\alpha_0})$ be a quasi-regular lower bounded semi-Dirichlet form with the parameter $\alpha_0\geq 0$. It always has an associated Markov process denoted by $X^{\alpha_0}$, see \S3.3 of  \cite{OY} for the regular cases on a locally compact separable metric space. By the quasi-homeomorphism method appeared in  \cite{CMR} or  \cite{HMS}, the existence of $X^{\alpha_0}$ can be extended to quasi-regular cases on a Hausdorff topological space. Clearly the semigroup $(P^{\alpha_0}_t)_{t\geq 0}$ of $X^{\alpha_0}$ does not satisfy \text{Hypothesis \ref{HYP3}} (if $\alpha_0>0$) but $(\text{e}^{-\alpha_0 t}P^{\alpha_0}_t)_{t\geq 0}$ does. However
in the proof of \text{Theorem \ref{FKT}}, we can find that all other properties are kept if we replace the non-negative property by the lower boundedness assumption. In  other words, there is no essential difference between the non-negative semi-Dirichlet forms and lower bounded semi-Dirichlet forms when discussing the killing transforms. Thus we have the following theorem. It proof is similar to \text{Theorem \ref{FKT}}, so we omit it.

\begin{theorem}\label{THMLX}
    Let $(\mathcal{E}^{\alpha_0},\mathcal{F}^{\alpha_0})$ be a lower bounded semi-Dirichlet form with the parameter $\alpha_0\geq 0$ on $L^2(E,m)$ and $X^{\alpha_0}$ its associated Markov process. Assume that the semigroup $(P^{\alpha_0}_t)_{t\geq 0}$ of $X^{\alpha_0}$ satisfies that \text{Hypothesis \ref{HYP3},  \ref{HYP1}} and \text{\ref{HYP5}} hold for the semigroup $(\text{e}^{-\alpha_0 t}P^{\alpha_0}_t)_{t\geq 0}$. Further let $M\in \text{MF}_{++}(X^{\alpha_0})$ and $\nu_{[M]}$ the bivariate Revuz measure of the  Stieltjes logarithm $[M]$. If there exist two constants $\lambda_0\geq \alpha_0,c>\frac{1}{4}$ such that
\[
	\mathcal{E}_{\lambda_0}(u,u)\geq c\int (u(x)-u(y))^2 \nu_{[M]}(dxdy) 	
\]
for any $u\in \mathcal{F}$,
then the subprocess $X^{\alpha_0,M}:=(X^{\alpha_0},M)$ of $X^{\alpha_0}$ satisfies the sector condition and its properly associated quasi-regular semi-Dirichlet form $(\mathcal{E}^{\alpha_0,M},\mathcal{F}^{\alpha_0,M})$ is lower bounded with the parameter $\alpha_0$ and given by:
\begin{equation}\label{FMA}
\begin{aligned}
		\mathcal{F}^{\alpha_0,M}&=\mathcal{F}^{\alpha_0}\cap L^2(E,\nu^1_{[M]}+\nu^2_{[M]}),\\
		\mathcal{E}^{\alpha_0,M}(u,v)&=\mathcal{E}^{\alpha_0}(u,v)+\nu_{[M]}(u\otimes v) \quad u,v\in \mathcal{F}^{\alpha_0, M},
\end{aligned}\end{equation}
where $\nu_{[M]}^1$ and $\nu_{[M]}^2$ are the left and right marginal measures of $\nu_{[M]}$.
\end{theorem}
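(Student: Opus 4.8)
The plan is to \emph{reduce to Theorem~\ref{FKT} by an exponential shift of the form}. Set
\[
  \bar{\mathcal{E}}:=\mathcal{E}^{\alpha_0}+\alpha_0(\cdot,\cdot)_m,\qquad \bar{\mathcal{F}}:=\mathcal{F}^{\alpha_0}.
\]
Since $(\mathcal{E}^{\alpha_0},\mathcal{F}^{\alpha_0})$ is lower bounded with parameter $\alpha_0$, the form $(\bar{\mathcal{E}},\bar{\mathcal{F}})$ is a non-negative semi-Dirichlet form on $L^2(E,m)$ with the same domain and an equivalent $\tilde{\mathcal{E}}_1$-norm, hence again quasi-regular; it is properly associated with the right Markov process $\bar{X}$ obtained from $X^{\alpha_0}$ by killing with $(\text{e}^{-\alpha_0 t})_{t\geq 0}$, whose semigroup is $(\text{e}^{-\alpha_0 t}P^{\alpha_0}_t)_{t\geq 0}$. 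By hypothesis this semigroup satisfies Hypothesis~\ref{HYP3}, \ref{HYP1} and \ref{HYP5}, so $\bar X$ falls under the standing assumptions of \S\ref{FKF} (and is automatically transient when $\alpha_0>0$). By Lemma~\ref{MMX} and Proposition~\ref{PROPAM} we have $M\in\text{MF}_{++}(\bar X)=\text{MF}_{++}(X^{\alpha_0})$ and the bivariate Revuz measure of the Stieltjes logarithm $[M]$ computed relative to $\bar X$ coincides with $\nu_{[M]}$; in particular $M$ admits the decomposition~\eqref{MFD} and $[M]$ the expression~\eqref{BMD3} (with $S_M=\zeta$) relative to $\bar X$.

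Next I would translate the hypothesis. For all $u$ one has $\mathcal{E}^{\alpha_0}_{\lambda_0}(u,u)=\bar{\mathcal{E}}_{\lambda_0-\alpha_0}(u,u)$ with $\lambda_0-\alpha_0\geq 0$, so the assumed estimate is exactly condition~\eqref{MCM} for the form $\bar{\mathcal{E}}$, with the same $c>\frac{1}{4}$ and with $\lambda_0-\alpha_0$ playing the role of the threshold constant. Applying Theorem~\ref{FKT} to $\bar X$ and $M$ then gives that $\bar X^M=(\bar X,M)$ satisfies the sector condition and is properly associated with the quasi-regular semi-Dirichlet form $(\bar{\mathcal{E}}^M,\bar{\mathcal{F}}^M)$ determined by
\[
  \bar{\mathcal{F}}^M=\bar{\mathcal{F}}\cap L^2(E,\nu^1_{[M]}+\nu^2_{[M]})=\mathcal{F}^{\alpha_0}\cap L^2(E,\nu^1_{[M]}+\nu^2_{[M]}),\qquad
  \bar{\mathcal{E}}^M(u,v)=\bar{\mathcal{E}}(u,v)+\nu_{[M]}(u\otimes v).
\]
Finally I would undo the shift. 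Killing by $M$ and killing by $(\text{e}^{-\alpha_0 t})_{t\geq 0}$ commute, so $\bar X^M$ is the subprocess of $X^{\alpha_0,M}$ killed by $(\text{e}^{-\alpha_0 t})_{t\geq0}$; equivalently its semigroup is $(\text{e}^{-\alpha_0 t}Q^{\alpha_0,M}_t)_{t\geq0}$, where $(Q^{\alpha_0,M}_t)_{t\geq0}$ is the semigroup of $X^{\alpha_0,M}$. Hence $X^{\alpha_0,M}$ is properly associated with the form $(\mathcal{E}^{\alpha_0,M},\mathcal{F}^{\alpha_0,M})$ given by $\mathcal{F}^{\alpha_0,M}:=\bar{\mathcal{F}}^M$ and $\mathcal{E}^{\alpha_0,M}:=\bar{\mathcal{E}}^M-\alpha_0(\cdot,\cdot)_m$, which unwinds to exactly~\eqref{FMA}; it is lower bounded with parameter $\alpha_0$ because $\mathcal{E}^{\alpha_0,M}_{\alpha_0}=\bar{\mathcal{E}}^M$ is non-negative, and the sector condition for $X^{\alpha_0,M}$ (i.e.\ Hypothesis~\ref{HYP1} for $(\text{e}^{-\alpha_0 t}Q^{\alpha_0,M}_t)$) together with quasi-regularity are inherited from $\bar X^M$ and $(\bar{\mathcal{E}}^M,\bar{\mathcal{F}}^M)$.

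The only genuine work is bookkeeping: one must check, using Appendix~\ref{SDF} and \S\ref{THWD}, that passing between a lower bounded semi-Dirichlet form of parameter $\alpha_0$ and its non-negative shift $+\alpha_0(\cdot,\cdot)_m$ preserves quasi-regularity, proper association, the sector condition and the Revuz correspondence (the last via Proposition~\ref{PROPAM}), so that Theorem~\ref{FKT} may be quoted verbatim; the case $\alpha_0=0$ is Theorem~\ref{FKT} itself. The analytic heart — the lower bound~\eqref{MEL}, closedness of the perturbed form, the sector estimate~\eqref{EMSC}, and the identification of the resolvent with $V^\alpha$ through Lemma~\ref{UFV} — was already carried out in Theorem~\ref{FKT}, so no new estimate is needed; alternatively one simply repeats that four-step argument with ``non-negative'' replaced throughout by ``lower bounded at level $\alpha_0$'' and the range $\lambda>\lambda_0\ (\geq0)$ replaced by $\lambda>\lambda_0\ (\geq\alpha_0)$, which is the sense in which the proof is ``similar''.
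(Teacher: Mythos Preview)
Your proposal is correct. The paper does not actually give a proof of Theorem~\ref{THMLX}; it simply states, in the paragraph preceding the theorem, that ``all other properties are kept if we replace the non-negative property by the lower boundedness assumption'' and that the proof is ``similar to Theorem~\ref{FKT}, so we omit it''. Thus the paper's intended argument is your \emph{alternative}: rerun the four steps of the proof of Theorem~\ref{FKT} verbatim with $\lambda>\lambda_0\ (\geq\alpha_0)$ in place of $\lambda>\lambda_0\ (\geq 0)$, replacing the appeal to Lemma~\ref{LBTN} for non-negativity by the trivial lower bound $\mathcal{E}^{\alpha_0,M}_{\alpha_0}=\bar{\mathcal{E}}^M\geq 0$.

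Your primary route---reduce to Theorem~\ref{FKT} by the exponential shift $\bar{\mathcal E}:=\mathcal{E}^{\alpha_0}_{\alpha_0}$ and the $\alpha_0$-subprocess $\bar X$, invoke Lemma~\ref{MMX} and Proposition~\ref{PROPAM} to transport $M$ and $\nu_{[M]}$, then undo the shift---is a genuinely different packaging. It has the advantage of using Theorem~\ref{FKT} as a black box, so no estimate needs to be repeated; the cost is the bookkeeping you flag (stability of quasi-regularity, proper association, sector condition and semi-Dirichlet property under $\pm\alpha_0(\cdot,\cdot)_m$). These are all routine: the quasi-notions depend only on the equivalence class of $\tilde{\mathcal{E}}_1$-norms, which the shift preserves; the sector condition for $(\text{e}^{-\alpha_0 t}Q^{\alpha_0,M}_t)$ is exactly what Theorem~\ref{FKT} yields for $\bar X^M$; and the semi-Dirichlet property of $(\mathcal{E}^{\alpha_0,M},\mathcal{F}^{\alpha_0,M})$ follows directly from the sub-Markovianity of the resolvent of $X^{\alpha_0,M}$, just as in the last paragraph of the proof of Theorem~\ref{FKT}. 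Either approach is acceptable; yours is more explicit than the paper's omitted proof.
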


The following theorem is an extension of Theorem~\ref{FKT} to the general multiplicative functionals. The proof is similar to that of Theorem~4.1 of  \cite{JGY} whereas it also needs some new techniques outlined in Theorem~\ref{EXIS} to deal with the absence of  weak duality assumption. We put its proof into Appendix~\ref{PROOF}. Similarly we can also obtain an extension of Theorem~\ref{THMLX} to  general multiplicative functionals and the main assumption \eqref{EUBM} remains. Due to space limiations, we won't go into details here.

\begin{theorem}\label{GCT}
    Let $X$ be a right Markov process satisfying \text{Hypothesis \ref{HYP3}}, \text{\ref{HYP1}}  and \ref{HYP5} and $(\mathcal{E},\mathcal{F})$  its associated semi-Dirichlet form on $L^2(E,m)$. Fix $M\in\text{MF}(X)$ and $m^*:=1_{E_M}\cdot m$. Assume that there exist two constants $\lambda_0\geq 0,c>\frac{1}{4}$ such that
    \begin{equation}\label{EUBM}
        \mathcal{E}_{\lambda_0}(u,u)\geq c\int (u(x)-u(y))^2\nu_{\bar{M}}(dxdy)
    \end{equation}
for any $u\in \mathcal{F}$   where $\nu_{\bar{M}}$ is the bivariate Revuz measure of $\bar{M}$. Then the subprocess $X^M=(X,M)$ on $E_M$ satisfies the sector condition and its properly associated quasi-regular semi-Dirichlet form $(\mathcal{E}^M,\mathcal{F}^M)$ on $L^2(E_M,m^*)$ is given by
    \begin{equation}\label{EFMD2}
    \begin{aligned}
        \mathcal{F}^M&=\mathcal{F}_{E_M}\cap L^2(E_M,\nu_{\bar{M}}^1+\nu_{\bar{M}}^2);   \\
        \mathcal{E}^M(u,v)&=\mathcal{E}(u,v)+\nu_{\bar{M}}(u\otimes v)\quad u,v\in \mathcal{F}^M,
    \end{aligned}
    \end{equation}
    where $\mathcal{F}_{E_M}:=\{u\in \mathcal{F}:u=0\;\text{q.e. on }E^c_M\}$ is the restricted space of $\mathcal{F}$ on $E_M$.
\end{theorem}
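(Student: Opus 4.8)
The plan is to run the proof of Theorem~\ref{FKT} with the Stieltjes logarithm $[M]$ replaced by $\bar M=1-M$ and $(\mathcal E,\mathcal F)$ replaced by its part on $E_M$; only two points require genuinely new input. As a first reduction — exactly as in the proof of Lemma~\ref{HYLM4} — I would factor the killing by $M$ through the killing at $T_{E_M^c}$: by Theorem~5.10 of \cite{PJF} the part process $X_{E_M}$ still satisfies Hypotheses~\ref{HYP3}, \ref{HYP1}, \ref{HYP5} and is properly associated with the quasi-regular part form $(\mathcal E,\mathcal F_{E_M})$ on $L^2(E_M,m^*)$, while by the stability of bivariate Revuz measures (a Proposition~\ref{PROPAM}-type argument together with the uniqueness of Proposition~\ref{UNIQUE}) the measure $\nu_{\bar M}$ is unchanged, so that hypothesis \eqref{EUBM} passes to the part form. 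We may therefore assume $E_M=E$, $m^*=m$ and $M\in\text{MF}_+$ with decomposition \eqref{MFD}; then $\nu_{\bar M}$ is given by \eqref{BMD}, and, since $\nu_{[M]}\le\nu_{\bar M}$ by Proposition~\ref{VMS}, \eqref{EUBM} in particular implies the condition \eqref{MCM}.

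\textbf{The core step: the analogue of Lemma~\ref{UFV}.} With $(V^\alpha)$ the resolvent of $X^M$, one must show $V^1(pL^2(E,m))\subset\mathcal F$ and
\[(u,g)_m=\mathcal E_\alpha(u,V^\alpha g)+\nu_{\bar M}(\tilde u\otimes V^\alpha g),\qquad u\in\mathcal F,\ g\in pL^2(E,m),\ \alpha\ \text{large}.\]
I would begin from the elementary identity $U^\alpha f(x)-V^\alpha f(x)=E^x\!\int_0^\infty e^{-\alpha t}f(X_t)\bar M_t\,dt$; writing $\bar M_t=\int_{(0,t]}d\bar M_s$ and using that $\bar M$ is an additive functional of $(X,M)$, the Markov property turns this into a renewal-type identity relating $U^\alpha f$, $V^\alpha f$ and the bivariate potential $\mathcal U^\alpha_{\bar M}$ of $\bar M$ — the analogue, in the present generality, of \eqref{UVF} — whose jump part is rewritten through the L\'evy system \eqref{LS} and whose continuous part is rewritten through the smooth-measure (PCAF) formula, precisely as in Lemma~\ref{UFV}. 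That the resulting potentials lie in $\mathcal F$ is obtained, as there, by dominating an $\alpha$-excessive function by a member of $\mathcal F$ and invoking Theorem~2.16 of \cite{MOR}; the identification of the boundary term with $\nu_{\bar M}(\tilde u\otimes V^\alpha g)$ uses the Revuz-correspondence identity \eqref{BRF}, the $\sigma$-integrability of $\bar M$ and the explicit form \eqref{BMD} supplied by Theorem~\ref{EXIS}, together with the fact that $\nu_{\bar M}$ charges no $m$-bipolar set, so that $\tilde u\otimes V^\alpha g$ is $\nu_{\bar M}$-a.e.\ unambiguously defined.

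\textbf{Assembling the form.} Granting the previous step, the rest is the argument of Theorem~\ref{FKT} verbatim, with $\nu_{[M]}$ replaced by $\nu_{\bar M}$ and $\mathcal F$ by $\mathcal F_{E_M}$: (i) the elementary bound $u(x)^2+u(y)^2\ge\tfrac12(u(x)-u(y))^2$ combined with \eqref{EUBM} yields a coercivity estimate $\mathcal E^M_\lambda(u,u)\ge c'\int\big(u(x)^2+u(y)^2\big)\,\nu_{\bar M}(dx\,dy)$ with $c'>0$, hence $V^1(L^2(E,m))\subset\mathcal F^M$ and density of $\mathcal F^M$ in $L^2(E,m)$; (ii) this coercivity, the sector condition for $(\mathcal E,\mathcal F)$ and the Cauchy--Schwarz bound $|\nu_{\bar M}(u\otimes v)|\le\nu^1_{\bar M}(u^2)^{1/2}\,\nu^2_{\bar M}(v^2)^{1/2}$ show that $(\mathcal E^M,\mathcal F^M)$ is a lower bounded closed form satisfying the sector condition; (iii) its associated strongly continuous resolvent coincides with $(V^\alpha)$ by the core step, so by Lemmas~\ref{PQC} and \ref{LBTN} and the Markovian property of $(V^\alpha)$ the form $(\mathcal E^M,\mathcal F^M)$ is in fact a non-negative semi-Dirichlet form properly associated with $X^M$; (iv) Lemmas~\ref{PQC} and \ref{HYLM4} give Hypotheses~\ref{HYP3} and \ref{HYP5} for $X^M$, whence $(\mathcal E^M,\mathcal F^M)$ is quasi-regular.

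\textbf{Main obstacle.} The delicate point is the core step in the absence of duality: one cannot compute $\mathcal U^\alpha_{\bar M}$ and the boundary term by means of a dual process, so every identity must first be established against an arbitrary co-excessive $\hat h$ (so that $\hat h\cdot m$ is $X$-excessive) and only afterwards stripped of $\hat h$ — exactly the mechanism set up in Theorem~\ref{EXIS}. One must also keep careful bivariate (pre-jump versus post-jump) bookkeeping at the jumps carrying $\Phi$ and at the hard-killing time $J_B$, since these contribute the two distinct pieces of $\nu_{\bar M}=\nu_{[M]}+\nu_{S_M}$ (Proposition~\ref{VMS}); this is precisely where the new techniques of Theorem~\ref{EXIS} must replace the weak-duality arguments of \cite{JGY}.
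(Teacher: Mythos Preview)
Your reduction to $M\in\text{MF}_+$ via Theorem~5.10 of \cite{PJF} matches the paper, but from there the routes diverge and your core step has a real gap.

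The paper does \emph{not} attempt a direct analogue of Lemma~\ref{UFV} for $\bar M$. Instead it reduces further: using Proposition~\ref{VMS} ($\nu_{\bar M}=\nu_{[M]}+\nu_{S_M}$) it factors the killing as an $\text{MF}_{++}$ piece---already covered by Theorem~\ref{FKT}---followed by the pure terminal-time killing $M_t=1_{[0,S_M)}(t)$ with $S_M=J_B$. All remaining work is for this terminal-time case, and the key identity \eqref{VFUF} is obtained by \emph{approximation}: one sets $B_n=B\cap\{\rho>1/n\}$, $T_n=J_{B_n}$, proves the identity for each $T_n$ (where $B_n$ is bounded away from the diagonal, so the argument of \cite{JGY} goes through), and then passes to the limit $T_n\downarrow S_M$. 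You do not mention this approximation at all.

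The limit step is where your diagnosis goes wrong. You assert that ``one cannot compute \ldots\ by means of a dual process''---but the paper does exactly that. It invokes the weak duality of \S\ref{THWD}: there \emph{is} a dual process $\check X$ relative to $\hat m=\hat g\cdot m$, and the paper rewrites $\nu_{T_n}(u\otimes V^1_nf)=(f,\check P^1_{\check T_n}(u/\hat g))_{\hat m}$ via the dual hitting operator, then passes to the limit using the well-convergence $\check T_n\downarrow\check S_M$. To justify the interchange of limits it truncates $u$ by $w_k=u\wedge k\hat g$ and appeals to Lemma~\ref{EXC} (coexcessive domination) for a uniform $\tilde{\mathcal E}_1$-bound. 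The role of Theorem~\ref{EXIS} is to translate between $\nu_{T_n}$ (relative to $m$) and $\nu^{\hat m}_{T_n}$ (relative to the excessive $\hat m$), not to replace duality altogether. Your proposed ``renewal-type identity'' is also problematic on its own terms: for a terminal time the Dynkin formula gives $U^\alpha f-V^\alpha f=P^\alpha_{S_M}U^\alpha f$ with $U^\alpha f$ (not $V^\alpha f$) on the right, so one cannot read off $\nu_{\bar M}(\tilde u\otimes V^\alpha f)$ directly; iterating does not terminate because $P^\alpha_{S_M}$ is not idempotent. This is precisely why the paper takes the approximation-plus-weak-duality route.
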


Note that every quasi-regular semi-Dirichlet form is quasi-homeomorphic to a regular Dirichlet space on a locally compact separable metric space. We shall discuss the condition \eqref{EUBM} in the context of regular semi-Dirichlet forms. Assume that $E$ is such a metric space and $m$ is a Radon measure on $E$. Recall that if $(\mathcal{E,F})$ is a regular semi-Dirichlet form on $L^2(E,m)$ with a core $\mathcal{C}$ and Assumption~(J) in \S5.2 of  \cite{OY} holds, then $(\mathcal{E,F})$ has a Beurling-Deny type decomposition, i.e. \eqref{EQMEUV} holds for any $u,v\in \mathcal{C}$. We refer the details to Theorem~5.2.1 of \cite{OY}. Let $\nu$ be the canonical measure of the associated Hunt process $X$ of $(\mathcal{E,F})$. Assume further that the form 
\begin{equation}\label{MQI}
\begin{aligned}
    \mathcal{Q}(u,v)&:=\int (u(x)-u(y))(v(x)-v(y))\nu(dxdy),  \\
    \mathcal{F}_{\mathcal{Q}}&:=\{u\in L^2(E,m):\mathcal{Q}(u,u)<\infty\}
\end{aligned}\end{equation}
is a symmetric Dirichlet form on $L^2(E,m)$. For example, $\nu$ is absolutely continuous with respect to $m\times m$, i.e.
	\begin{equation}\label{EQNUD}
		\nu(dxdy)=j(x,y)m(dx)m(dy)
	\end{equation}
for some non-negative function $j$ on $E\times E\setminus d$, see Lemma~1.5.6 of \cite{OY}. 

\begin{lemma}\label{LM7}
	Assume that there exist two constants $\lambda_0\geq 0$ and $c>1/4$ such that for any $u\in \mathcal{C}$,
	\begin{equation}\label{EQMEL}
	 \mathcal{E}_{\lambda_0}(u,u)\geq c\mathcal{Q}(u,u).
\end{equation}
Then for any $M\in \text{MF}$, the condition \eqref{EUBM} holds for any $u\in \mathcal{F}$ with the same parameters $\lambda_0$ and $c$.
\end{lemma}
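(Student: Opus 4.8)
The plan is to derive the bound \eqref{EUBM} from the hypothesis \eqref{EQMEL} in two moves: first dominate the bivariate Revuz measure $\nu_{\bar M}$ by the canonical measure $\nu$ against the relevant integrand, and then propagate the resulting inequality from the core $\mathcal{C}$ to all of $\mathcal{F}$ by a closedness/approximation argument.

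For the first move I would invoke the structure of $\nu_{\bar M}$ worked out in \S\ref{EXAM}: by Propositions~\ref{SLMD3}, \ref{USC} and \ref{VMS} together with the identity $\nu^M+1_{E\times E\setminus d}\cdot\nu_{\bar M}=\nu$, the restriction of $\nu_{\bar M}$ to $E\times E\setminus d$ equals $\psi\cdot\nu$ for some Borel $\psi$ with $0\le\psi\le 1$ (explicitly $\psi=1_B+1_{B^c}\Phi$ when $M\in\text{MF}_+$, since $0\le\Phi<1$), while the part of $\nu_{\bar M}$ carried by the diagonal $d$ is of the form $\delta_y(dx)a(y)\mu_A(dy)$. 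Since the integrand $(u(x)-u(y))^2$ vanishes on $d$ and $\nu$ is supported off $d$, this yields, for every quasi-continuous $u$,
\[
\int (u(x)-u(y))^2\,\nu_{\bar M}(dxdy)\;\le\;\int (u(x)-u(y))^2\,\nu(dxdy)\;=\;\mathcal{Q}(u,u),
\]
where one also uses that $\nu$ and the marginals $\nu_{\bar M}^1,\nu_{\bar M}^2$ charge no $m$-bipolar (resp.\ $m$-polar) sets, so the quasi-continuous representative of $u$ is unambiguous for these integrals. In particular, for $u\in\mathcal{C}$ the hypothesis \eqref{EQMEL} gives $\mathcal{E}_{\lambda_0}(u,u)\ge c\,\mathcal{Q}(u,u)\ge c\int(u(x)-u(y))^2\nu_{\bar M}(dxdy)$, i.e.\ \eqref{EUBM} on the core.

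For the second move, take $u\in\mathcal{F}$ and choose $u_n\in\mathcal{C}$ with $\tilde{\mathcal{E}}_1(u_n-u,u_n-u)\to 0$ (possible as $\mathcal{C}$ is a core). Then $u_n\to u$ in $L^2(E,m)$, and since $\mathcal{E}(v,v)=\tilde{\mathcal{E}}(v,v)$ for the non-negative closed form $(\mathcal{E},\mathcal{F})$, also $\mathcal{E}_{\lambda_0}(u_n,u_n)\to\mathcal{E}_{\lambda_0}(u,u)<\infty$. From $c\,\mathcal{Q}(u_n,u_n)\le\mathcal{E}_{\lambda_0}(u_n,u_n)$ we obtain $\sup_n\mathcal{Q}(u_n,u_n)<\infty$; because $\mathcal{Q}$ is a symmetric Dirichlet form, hence a closed form, the $L^2$-convergence $u_n\to u$ together with this uniform bound forces $u\in\mathcal{F}_{\mathcal{Q}}$ and $\mathcal{Q}(u,u)\le\liminf_n\mathcal{Q}(u_n,u_n)$. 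Hence
\[
\mathcal{E}_{\lambda_0}(u,u)=\lim_n\mathcal{E}_{\lambda_0}(u_n,u_n)\ge c\,\liminf_n\mathcal{Q}(u_n,u_n)\ge c\,\mathcal{Q}(u,u)\ge c\int (u(x)-u(y))^2\,\nu_{\bar M}(dxdy),
\]
the last step by the measure comparison of the previous paragraph applied to $u$ itself. This establishes \eqref{EUBM} for all $u\in\mathcal{F}$, with the same $\lambda_0$ and $c$.

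The only non-routine point is this last (density) step: it is exactly the closedness of $\mathcal{Q}$ — the reason the statement requires $\mathcal{Q}$ to be a symmetric Dirichlet form — that provides the lower semicontinuity of the form along the approximating sequence, and one must be slightly careful that passing to quasi-continuous representatives does not disturb the $\nu$- and $\nu_{\bar M}$-integrals, which is guaranteed because these measures do not charge ($m$-bi)polar sets. The measure domination $\nu_{\bar M}|_{E\times E\setminus d}\le\nu$ is routine once the formulas of \S\ref{EXAM} are in hand.
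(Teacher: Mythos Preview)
Your proposal is correct and follows essentially the same route as the paper: first dominate $\nu_{\bar M}|_{E\times E\setminus d}$ by $\nu$ via Proposition~\ref{USC} (so that $\int(u(x)-u(y))^2\nu_{\bar M}(dxdy)\le\mathcal{Q}(u,u)$), then extend \eqref{EQMEL} from the core $\mathcal{C}$ to $\mathcal{F}$ using the closedness of $(\mathcal{Q},\mathcal{F}_{\mathcal{Q}})$. The only cosmetic difference is that the paper applies \eqref{EQMEL} to differences $u_n-u_m\in\mathcal{C}$ to get a $\mathcal{Q}_1$-Cauchy sequence and then full $\mathcal{Q}_1$-convergence, whereas you use the (equivalent) lower semicontinuity formulation $\mathcal{Q}(u,u)\le\liminf_n\mathcal{Q}(u_n,u_n)$; both are standard consequences of closedness.
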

\begin{proof}
	Without loss of generality we may assume that $M\in \text{MF}_+$.  Then it follows from Proposition~\ref{USC} that $\nu_{\bar{M}}|_{E\times E\setminus d}\leq \nu$. Thus we only need to prove that \eqref{EQMEL} holds for any $u\in \mathcal{F}$. To this end take a sequence $\{u_n:n\geq 1\}\subset \mathcal{C}$ such that 
$\mathcal{E}_{\lambda_0}(u_n-u,u_n-u)\rightarrow 0$ as $n\rightarrow\infty$ for some $\lambda_0$ large enough. In particular $\{u_n:n\geq 1\}$ is $\tilde{\mathcal{E}}_{\lambda_0}$-Cauchy and hence also $\mathcal{Q}_1$-Cauchy. Since $u_n\rightarrow u$ in $L^2(E,m)$ we can deduce that $u\in \mathcal{F}_\mathcal{Q}$ and $\mathcal{Q}_1(u_n-u,u_n-u)\rightarrow 0$ as $n\rightarrow \infty$. Clearly \eqref{EQMEL} holds for any $u_n$. By letting $n\rightarrow \infty$ we have \eqref{EQMEL} also holds for $u$.  
\end{proof}

In the end of this section , we shall present two examples of typical semi-Dirichlet forms which 
are introduced by other researchers and try to illustrate that the condition \eqref{EUBM} is not so awkward.
In the first example it will be seen that the (lower bounded) jump-type semi-Dirichlet form under the assumption \eqref{QVC} always satisfies the condition \eqref{EUBM}. In particular \eqref{QVC} is a typical sufficient condition to obtain the sector condition of jump-type semi-Dirichlet form, see \cite{FUT}, \cite{OY} and  \cite{SW}.

\begin{example}\label{EMAP}
Let $\nu$ be a $\sigma$-finite positive measure on $E\times E\setminus d$ and assume that the family $C_c^{\text{lip}}(E)$ of all Lipschitz continuous functions with compact support on $E$ is a subspace of $\mathcal{F}_\mathcal{Q}$ where  $(\mathcal{Q},\mathcal{F}_\mathcal{Q})$ defined by \eqref{MQI} is a symmetric Dirichlet form on $L^2(E,m)$.
Moreover for any $u,v\in C_c^{\text{lip}}(E)$ define another form
\[
    \mathcal{E}(u,v):=\frac{1}{2}\mathcal{Q}(u,v)+\frac{1}{2}\int (v(x)-v(y))u(y)(\nu(dxdy)-\nu(dydx)).
\]
Suppose that the following assumption
\begin{equation}\label{QVC}
    \bigg|\int (v(x)-v(y))u(y)(\nu(dxdy)-\nu(dydx))\bigg|\leq K||u||_{L^2(E,m)}\cdot \sqrt{\mathcal{Q}(v,v)}
\end{equation}
holds for some constant $K$ which is independent of $u,v\in C_c^{\text{lip}}(E)$. Then the domain of the form $\mathcal{E}$ can be extended to some dense subspace $\mathcal{F}$ of $L^2(E,m)$ and $(\mathcal{E},\mathcal{F})$ is a lower bounded semi-Dirichlet form on $L^2(E,m)$.  The typical examples of  pure-jump type Markov processes which satisfy all the above conditions but not the duality assumption are the stable-like processes, i.e. $E=R^d$, $\nu(dxdy)=j(x,y)dxdy$ where
\[
	j(x,y):=w(x)|x-y|^{-d-\alpha(x)}
\]
and $w(x), \alpha(x)$ satisfy (5.1) and (5.2) of  \cite{FUT}. For more details, see  \cite{FUT}, \cite{SW} and \S1.5.2 of  \cite{OY}.

Let $X$ be the associated Hunt process of $(\mathcal{E},\mathcal{F})$. Clearly $\nu$ is exactly the canonical measure of $X$. We assert that $(\mathcal{E,F})$ satisfies \eqref{EQMEL} for any $u\in C_c^{\text{lip}}(E)$. 
To this end fix a constant $\frac{1}{4}<c<\frac{1}{2}$, $\lambda_0:= K^2/(8-16c)$ and let
 \begin{equation}\label{EQAUV}
 	\mathcal{A}(u,v):=\int (v(x)-v(y))u(y)(\nu(dxdy)-\nu(dydx))\end{equation} for any $u,v\in C_c^{\text{lip}}(E)$.
By \eqref{QVC} and H\"older inequality we have 
\[\begin{aligned}
	 \mathcal{E}_{\lambda_0}&(u,u)\\&=c\mathcal{Q}(u,u)+(\frac{1}{2}-c)\mathcal{Q}(u,u)+\lambda_0(u,u)_m+\frac{1}{2}\mathcal{A}(u,u)\\
					&\geq  c\mathcal{Q}(u,u)+(\frac{1}{2}-c)\mathcal{Q}(u,u)+\lambda_0(u,u)_m-\frac{1}{2}K||u||_{L^2(E,m)}\cdot \sqrt{\mathcal{Q}(u,u)} \\
					&\geq  c\mathcal{Q}(u,u)
\end{aligned}\]
for any $u\in C_c^{\text{lip}}(E)$.
It follows from Lemma~\ref{LM7} that for any $M\in \text{MF}$, \eqref{EUBM} holds for any $u\in \mathcal{F}$ with the above parameters $\lambda_0$ and $c$. 
In particular $X^M$ satisfies the sector condition and its associated lower bounded semi-Dirichlet form can be obtained similarly to Theorem~\ref{GCT}.
\end{example}

We use the same notation $\mathcal{A}$ as \eqref{EQAUV} to denote the antisymmtric part of $\mathcal{E}^{(j)}$ in the Beurling-Deny type decomposition \eqref{EQMEUV} of $(\mathcal{E,F})$.

\begin{proposition}\label{PROP9}
	Assume that \eqref{QVC} holds for any $u,v\in \mathcal{C}$ with some constant $K$ which is independent of $u,v$ and there exist two constants $c_1, \beta$ such that $0\leq c_1<1/4,\beta\geq 0$ and
\begin{equation}\label{EQMEC}
	\mathcal{E}^{(c)}(u,u)+c_1\mathcal{Q}(u,u) +\int u(x)^2k(dx)+\beta \int u(x)^2m(dx)\geq 0
\end{equation}
 for any $u\in\mathcal{C}$. Then for any $M\in \text{MF}$, the condition \eqref{EUBM} holds for any $u\in \mathcal{F}$ with the parameters $c$ and $\lambda_0$ such that $1/4<c<1/2-c_1$ and $\lambda_0=K^2/(8-16c-16c_1)+\beta$.
\end{proposition}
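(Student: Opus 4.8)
The plan is to reduce everything to the hypothesis of Lemma~\ref{LM7}: once I establish that
\[
\mathcal{E}_{\lambda_0}(u,u)\ge c\,\mathcal{Q}(u,u)\qquad\text{for all }u\in\mathcal{C}
\]
with $c$ and $\lambda_0=K^2/(8-16c-16c_1)+\beta$ as stated, Lemma~\ref{LM7} immediately yields \eqref{EUBM} for every $u\in\mathcal{F}$ and every $M\in\text{MF}$, which is the assertion. Thus the whole argument is a single quadratic‑form estimate on the core $\mathcal{C}$; the extension from $\mathcal{C}$ to $\mathcal{F}$ and the comparison $\nu_{\bar M}|_{E\times E\setminus d}\le\nu$ are already handled inside Lemma~\ref{LM7}.

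For $u\in\mathcal{C}$ I would first expand $\mathcal{E}(u,u)$ via the Beurling--Deny decomposition \eqref{EQMEUV}--\eqref{EQMEJ}. Setting $v=u$ in \eqref{EQMEJ} and relabelling $x\leftrightarrow y$ in the antisymmetric integral identifies the jump part with $\tfrac12\mathcal{Q}(u,u)+\tfrac12\mathcal{A}(u,u)$, $\mathcal{A}$ normalized as in \eqref{EQAUV}, so that
\[
\mathcal{E}_{\lambda_0}(u,u)=\Big[\mathcal{E}^{(c)}(u,u)+\int u^2\,dk+\beta\!\int u^2\,dm\Big]+\tfrac12\mathcal{Q}(u,u)+\tfrac12\mathcal{A}(u,u)+\lambda_1\!\int u^2\,dm,
\]
where $\lambda_1:=\lambda_0-\beta=K^2/(8-16c-16c_1)$. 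Invoking the lower bound \eqref{EQMEC}, the bracketed term is $\ge-c_1\mathcal{Q}(u,u)$, hence $\mathcal{E}_{\lambda_0}(u,u)\ge(\tfrac12-c_1)\mathcal{Q}(u,u)+\tfrac12\mathcal{A}(u,u)+\lambda_1\!\int u^2\,dm$.

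Next I would absorb the antisymmetric term: putting $u=v$ in \eqref{QVC} gives $|\mathcal{A}(u,u)|\le K\,\|u\|_{L^2(E,m)}\sqrt{\mathcal{Q}(u,u)}$, and Young's inequality $ab\le\tfrac{\varepsilon}{2}a^2+\tfrac1{2\varepsilon}b^2$ with $a=\sqrt{\mathcal{Q}(u,u)}$, $b=K\|u\|_{L^2(E,m)}$ yields
\[
\mathcal{E}_{\lambda_0}(u,u)\ge\Big(\tfrac12-c_1-\tfrac{\varepsilon}{4}\Big)\mathcal{Q}(u,u)+\Big(\lambda_1-\tfrac{K^2}{4\varepsilon}\Big)\|u\|_{L^2(E,m)}^2 .
\]
Choosing $\varepsilon=2-4c-4c_1$, which is positive precisely because $c<\tfrac12-c_1$, makes the first coefficient equal to $c$, and since then $K^2/(4\varepsilon)=K^2/(8-16c-16c_1)=\lambda_1$ the $L^2$‑term vanishes; this gives $\mathcal{E}_{\lambda_0}(u,u)\ge c\,\mathcal{Q}(u,u)$ on $\mathcal{C}$ with $c>1/4$ and $\lambda_0=\lambda_1+\beta\ge0$, so Lemma~\ref{LM7} applies.

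The argument is essentially bookkeeping, and the only delicate points I anticipate are (i) getting the sign and the factor $\tfrac12$ right in the identification $\mathcal{E}^{(j)}(u,u)=\tfrac12\mathcal{Q}(u,u)+\tfrac12\mathcal{A}(u,u)$ when matching \eqref{EQMEJ} to the normalization of $\mathcal{A}$ in \eqref{EQAUV}, and (ii) choosing $\varepsilon$ optimally so that the resulting constant comes out to the announced value $K^2/(8-16c-16c_1)+\beta$ rather than merely some finite number. There is no genuine analytic obstacle beyond those already resolved in Lemma~\ref{LM7}.
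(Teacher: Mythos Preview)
Your proposal is correct and follows essentially the same route as the paper: decompose $\mathcal{E}_{\lambda_0}(u,u)$ via \eqref{EQMEUV}--\eqref{EQMEJ} into the bracket handled by \eqref{EQMEC}, the piece $(\tfrac12-c-c_1)\mathcal{Q}+\tfrac12\mathcal{A}+\lambda_1\|u\|^2$ absorbed by \eqref{QVC} and Young's inequality with the optimal $\varepsilon$, and the remaining $c\,\mathcal{Q}(u,u)$, then invoke Lemma~\ref{LM7}. The paper presents the same splitting in one line and simply says ``it follows from \eqref{QVC} and \eqref{EQMEC}''; your version makes the Young step and the choice of $\varepsilon$ explicit, but the argument is identical.
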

\begin{proof}
Fix two constants  $c$ and $\lambda_0$ as above.
For any $u\in \mathcal{C}$ we have
\[\begin{aligned}
	\mathcal{E}_{\lambda_0}(u,u)=\mathcal{E}&^{(c)}(u,u)+c_1\mathcal{Q}(u,u) +\int u(x)^2k(dx)+\beta \int u(x)^2m(dx) \\
		&+(\frac{1}{2}-c-c_1)\mathcal{Q}(u,u)+\frac{1}{2}\mathcal{A}(u,u)+\frac{K^2}{8-16c-16c_1}(u,u)_m\\
		&+c\mathcal{Q}(u,u).
\end{aligned}\]
It follows from \eqref{QVC} and \eqref{EQMEC} that $\mathcal{E}_{\lambda_0}(u,u)\geq c\mathcal{Q}(u,u)$ for any $u\in \mathcal{C}$.  By Lemma~\ref{LM7} we can obtain the conclusion. 
\end{proof}

Note that the semi-local part, i.e. the first and third terms in the right side of \eqref{EQMEUV}, is not necessarily non-negative or lower bounded. But clearly we have the following corollary of Proposition~\ref{PROP9}.

\begin{corollary}\label{COR3}
Assume that \eqref{QVC} holds for any $u,v\in \mathcal{C}$ with some constant $K$ which is independent of $u,v$.	If the semi-local part of $(\mathcal{E,F})$ is lower bounded, then the condition \eqref{EUBM} holds for any $M\in\text{MF}$.
\end{corollary}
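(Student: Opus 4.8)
The plan is to derive this as the special case $c_1=0$ of Proposition~\ref{PROP9}, so that no new argument is needed beyond unwinding definitions. First I would record what ``the semi-local part of $(\mathcal{E},\mathcal{F})$ is lower bounded'' means in terms of the Beurling--Deny type decomposition \eqref{EQMEUV}: the semi-local part is the quadratic form $u\mapsto\mathcal{E}^{(c)}(u,u)+\int u(x)^2 k(dx)$, and its lower boundedness on $L^2(E,m)$ asserts exactly that there is a constant $\beta\geq 0$ with
\[
    \mathcal{E}^{(c)}(u,u)+\int u(x)^2 k(dx)+\beta\int u(x)^2 m(dx)\geq 0
\]
for all $u\in\mathcal{C}$. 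Since $0$ satisfies $0\leq 0<1/4$ and the summand $c_1\mathcal{Q}(u,u)$ disappears when $c_1=0$, this is precisely hypothesis \eqref{EQMEC} with $c_1=0$.

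With \eqref{QVC} assumed and \eqref{EQMEC} verified for $c_1=0$, I would then simply invoke Proposition~\ref{PROP9}: for every $M\in\text{MF}$ the bound \eqref{EUBM} holds for all $u\in\mathcal{F}$, with parameters $c,\lambda_0$ satisfying $1/4<c<1/2$ and $\lambda_0=K^2/(8-16c)+\beta$.

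I do not expect any real obstacle; the content is entirely contained in Proposition~\ref{PROP9}. The one remark worth keeping in the write-up is that $\mathcal{Q}$ is a nonnegative symmetric form, so adding the term $c_1\mathcal{Q}(u,u)$ only increases the left side of \eqref{EQMEC}; thus lower boundedness of the semi-local part is strictly stronger than the hypothesis Proposition~\ref{PROP9} actually uses, and the choice $c_1=0$ loses nothing.
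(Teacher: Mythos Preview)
Your proposal is correct and matches the paper's approach exactly: the paper also treats Corollary~\ref{COR3} as an immediate consequence of Proposition~\ref{PROP9}, and your identification of ``semi-local part lower bounded'' with the hypothesis \eqref{EQMEC} at $c_1=0$ is precisely the intended reading.
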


The second example is taken from \cite{UT} in which the author characterizes the associated (lower bounded) semi-Dirichlet forms of multidimensional diffusion processes with jumps. These semi-Dirichlet forms satisfy the Beurling-Deny type decomposition.
In the following example we will illustrate that any (lower bounded) semi-Dirichlet form outlined in  \cite{UT} satisfies the condition \eqref{EUBM} for any $M\in \text{MF}$. In particular its killing transform by any MF always keeps the sector condition.

\begin{example}\label{EXA2}
The authors of \cite{UT} considered the following second partial differential operator with a non-local part:
\[
\begin{aligned}
	\mathcal{L}u(x):=&\mathcal{L}_cu(x)+\mathcal{L}_ju(x)\\
		=&\frac{1}{2}\sum_{i,j=1}^d\frac{\partial}{\partial x_i}\left(a_{ij}(x)\frac{\partial}{\partial x_j}\right)u(x)-\sum_{i=1}^d b_i(x)\frac{\partial}{\partial x_i}u(x)-c(x)u(x)\\
		&\quad +\lim_{n\rightarrow \infty}\frac{1}{2}\int_{|x-y|>1/n}\left(u(y)-u(x)\right)k(x,y)dy,\quad x\in G,
\end{aligned}\]	
where $a_{ij},b_i$ and $c$ are measurable functions defined on an open set $G$ of $\mathbf{R}^d$ for $i,j=1,2,\cdots,d$ and $k(x,y)$ is a measurable function defined on $G\times G\setminus \{(x,x):x\in G\}$. Under some appropriate conditions its associated semi-Dirichlet form can be written as
\[
	\eta(u,v)=\eta^{(c)}(u,v)+\eta^{(j)}(u,v),
\]
for any $u,v\in C_c^1(G)$ where
\[
\begin{aligned}
	\eta^{(c)}(u,v)=\frac{1}{2}&\sum_{i,j=1}^d\int_G a_{ij}(x)\frac{\partial u}{\partial x_i}(x)\frac{\partial v}{\partial x_j}(x)dx +\sum_{i=1}^d\int_G b_i(x)v(x)\frac{\partial u}{\partial x_i}(x)dx\\
	&+\int_G u(x)v(x)c(x)dx,
\end{aligned}\]	
and the non-local part $\eta^{(j)}$ is similar to \eqref{EQMEJ} by replacing $\nu$ with $k(x,y)dxdy$.

For the uniformly elliptic case, i.e. $(a_{ij})_{1\leq i,j\leq d}$ satisfies the uniformly elliptic condition, under some other assumptions (say (D.1)-(D.3) and (J.1) (J.2) of \cite{UT}) the form $\eta$ can be extended from $C_c^1(G)\times C_c^1(G)$ to $\mathcal{F}\times \mathcal{F}$ to be  a regular lower bounded semi-Dirichlet form $(\eta, \mathcal{F})$ on $L^2(G)$, see Theorem~3.1 of \cite{UT}. In particular \eqref{QVC} is satisfied for $\nu=k(x,y)dxdy, u,v\in C_c^1(G)$ (see (2.11) of \cite{FUT}) and  the semi-local part $\eta^{(c)}$ is lower bounded, see the proof of Proposition~3.2 of \cite{UT}. Thus it follows from Corollary~\ref{COR3} that the condition \eqref{EUBM} holds for any $M\in\text{MF}$.

For the degenerate case on $G=\mathbf{R}^d$, i.e. $(a_{ij})_{1\leq i,j\leq d}$ is only non-negative definite, under some conditions $\eta$ can also be extended from $C_c^1(\mathbf{R}^d)\times C_c^1(\mathbf{R}^d)$ to $\mathcal{F}\times \mathcal{F}$ to be  a regular lower bounded semi-Dirichlet form $(\eta, \mathcal{F})$ on $L^2(\mathbf{R}^d)$, see Theorem~4.1 of \cite{UT}. In particular \eqref{QVC} is also satisfied and there exists a constant $\beta\geq 0$ such that
\[
	\eta^{(c)}(u,u)+\frac{1}{16}\int \left(u(x)-u(y)\right)^2k(x,y)dxdy+\beta||u||^2_{L^2}\geq 0
\]
for any $u\in C_c^1(\mathbf{R}^d)$, see the first inequality in the proof of Theorem~4.1 of \cite{UT}. From Proposition~\ref{PROP9} we can deduce that the condition \eqref{EUBM} also holds for any $M\in\text{MF}$.

We refer more specific examples to \S6 of \cite{UT}.
\end{example}

\section{Killing and subordination}\label{KAS}

In this section we shall extend the results of  \cite{JGY3}, which states that killing transform in Markoc processes is equivalent to subordination in Dirichlet form, to the semi-Dirichlet forms. Since the idea of proof is essentially the same, we only state the results and omit the proofs here.

 Let $X$ be a right process on $E$ satisfying \text{Hypothesis \ref{HYP3}, \ref{HYP1}}  and \text{ \ref{HYP5}} and $(\mathcal{E},\mathcal{F})$ its associated quasi-regular semi-Dirichlet form on $L^2(E,m)$. Define a class of multiplicative functionals of $X$ by
\begin{equation}
    \text{MF}^*_+ :=\{M\in \text{MF}_+: M \text{ satisfies \eqref{EUBM}}  \}.
\end{equation}
Note that if $m$ is excessive, then $\text{MF}_+^*=\text{MF}_+$.  For any $M\in \text{MF}^*_+$ it follows from Theorem~\ref{FKT} that the subprocess $X^M$ also satisfies \text{Hypothesis \ref{HYP3}, \ref{HYP1}}  and \text{\ref{HYP5}} and its properly associated quasi-regular semi-Dirichlet form can be given by \eqref{EFMD2}. Replacing the Dirichlet form with semi-Dirichlet form in Definition~3.1 of \cite{JGY3}, we can similarly define the subordination of semi-Dirichlet forms.
Then we have the following two properties about the subordinations. Their proofs are completely the same as \text{Lemma 3.2} and \text{Corollary 3.3} of   \cite{JGY3}.

\begin{lemma}\label{LSS}
    \begin{description}
      \item[(1)] If $(\mathcal{E}^2,\mathcal{F}^2)$ is subordinate to $(\mathcal{E}^1,\mathcal{F}^1)$, then there exists an constant $C>0$ such that $\mathcal{E}^1_1(u,u)\leq C\cdot \mathcal{E}^2_1(u,u)$ for any $u\in \mathcal{F}^2$.
      \item[(2)] If $(\mathcal{E}^2,\mathcal{F}^2)$ is strongly subordinate to $(\mathcal{E}^1,\mathcal{F}^1)$, then any $\mathcal{E}^2$-nest is an $\mathcal{E}^1$-nest. Therefore any $\mathcal{E}^2$-q.c function is $\mathcal{E}^1$-q.c., and if $(\mathcal{E}^2,\mathcal{F}^2)$ is quasi-regular, then so is $(\mathcal{E}^1,\mathcal{F}^1)$.
    \end{description}
\end{lemma}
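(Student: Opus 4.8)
The plan is to follow the corresponding arguments of \cite{JGY3} (Lemma 3.2 and Corollary 3.3), checking that the only tools used there — the resolvent identity defining subordination, the sector condition, and the characterization of nests via resolvent operators — are all available in the semi-Dirichlet setting, which they are by the hypotheses standing in this section and by the appendix. Concretely, recall that the subordination of $(\mathcal{E}^2,\mathcal{F}^2)$ to $(\mathcal{E}^1,\mathcal{F}^1)$ means there is a kernel/positive-measure mechanism (a Bochner-type subordinator) linking the two resolvents $(G^1_\alpha)$ and $(G^2_\alpha)$, so that $G^2_\alpha$ can be written as an average of the $G^1_\beta$'s for $\beta\ge\alpha$; strong subordination adds the extra positivity ensuring $G^2_\alpha f\le G^1_\alpha f$ for $f\ge 0$ in the appropriate sense.

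For part (1), I would start from $u\in\mathcal{F}^2$ and write $u$ as an $\tilde{\mathcal{E}}^2_1$-limit of elements of the form $G^2_1 g_n$ with $g_n\in L^2(E,m)$ (this is possible since such elements are $\tilde{\mathcal{E}}^2_1$-dense in $\mathcal{F}^2$). For $w=G^2_1 g$ one computes $\mathcal{E}^1_1(w,w)$ using the subordination identity to express $G^2_1 g$ through the $G^1_\beta$'s, and then bounds it by $(w,g)_m$ up to a multiplicative constant coming from the total mass of the subordinating measure; since $(w,g)_m=\mathcal{E}^2_1(w,w)$, this gives $\mathcal{E}^1_1(w,w)\le C\,\mathcal{E}^2_1(w,w)$. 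One then passes to the limit: the bound shows $\{G^2_1 g_n\}$ is $\tilde{\mathcal{E}}^1_1$-Cauchy, so its limit lies in $\mathcal{F}^1$ and the inequality survives. The sector condition for $(\mathcal{E}^1,\mathcal{F}^1)$ is what lets one control $\mathcal{E}^1_1(u,u)$ (rather than only its symmetric part $\tilde{\mathcal{E}}^1_1$) by the same quantity, possibly enlarging $C$ by the sector constant $K$.

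For part (2), given an $\mathcal{E}^2$-nest $\{F_k\}$, one must show it is an $\mathcal{E}^1$-nest, i.e.\ that $\bigcup_k \mathcal{F}^1_{F_k}$ is $\tilde{\mathcal{E}}^1_1$-dense in $\mathcal{F}^1$, equivalently that $\mathrm{Cap}^1(E\setminus F_k)\to 0$. Here I would use strong subordination: the balayage/equilibrium potentials for $(\mathcal{E}^1,\mathcal{F}^1)$ are dominated (via the resolvent domination $G^2_\alpha\le G^1_\alpha$) by those for $(\mathcal{E}^2,\mathcal{F}^2)$, so $\mathrm{Cap}^1$ is dominated by a constant times $\mathrm{Cap}^2$ on the relevant sets, whence an $\mathcal{E}^2$-nest is an $\mathcal{E}^1$-nest. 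The statements about q.c.\ functions and about quasi-regularity then follow formally: a function q.c.\ along an $\mathcal{E}^2$-nest is q.c.\ along the same nest viewed as an $\mathcal{E}^1$-nest, and the defining sequence exhibiting quasi-regularity of $(\mathcal{E}^2,\mathcal{F}^2)$ (an $\mathcal{E}^2$-nest of compacts plus an $\tilde{\mathcal{E}}^2_1$-dense family of $\mathcal{E}^2$-q.c.\ functions) transfers verbatim to $(\mathcal{E}^1,\mathcal{F}^1)$ once we know $\tilde{\mathcal{E}}^1_1\le C\,\tilde{\mathcal{E}}^2_1$ on $\mathcal{F}^2$ and that $\mathcal{F}^2$ is $\tilde{\mathcal{E}}^1_1$-dense in $\mathcal{F}^1$ (which comes out of the first part).

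The main obstacle I anticipate is not conceptual but bookkeeping: one must make sure that every step of the \cite{JGY3} argument that implicitly used the symmetry of a Dirichlet form (or the fact that $m$ is excessive, giving a clean probabilistic potential theory) is replaced by the semi-Dirichlet substitutes — the sector condition in place of symmetry (for comparing $\mathcal{E}^i$ with $\tilde{\mathcal{E}}^i$), co-excessive regularization in place of excessiveness of $m$, and the notions of $m$-polar / $m$-bipolar sets and quasi-continuity from the appendix in place of the classical ones. Since the authors explicitly claim the proofs are "completely the same," the real content of writing this out is verifying that these substitutions do not break any estimate; I would flag the resolvent-domination step in part (2) as the place where strong (rather than ordinary) subordination is genuinely needed.
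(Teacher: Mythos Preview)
Your proposal rests on a misidentification of what ``subordination'' means in this paper. The notion imported from Definition~3.1 of \cite{JGY3} is \emph{not} Bochner subordination of semigroups via a L\'evy subordinator. In \cite{JGY3}, $(\mathcal{E}^2,\mathcal{F}^2)$ subordinate to $(\mathcal{E}^1,\mathcal{F}^1)$ means, in essence, that $\mathcal{F}^2\subset\mathcal{F}^1$ and there is a positive $\sigma$-finite measure $\nu$ on $E\times E$ charging no bipolar sets with
\[
\mathcal{E}^2(u,v)=\mathcal{E}^1(u,v)+\nu(u\otimes v),\qquad u,v\in\mathcal{F}^2,
\]
while ``strong'' subordination adds that $\mathcal{F}^2$ is $\tilde{\mathcal{E}}^1_1$-dense in $\mathcal{F}^1$. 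This is exactly the shape of the Feynman--Kac perturbation \eqref{EFMD}, and is what Theorem~\ref{EMSS} asserts with $\nu=\nu_{\bar M}$; Bochner subordination would correspond to time change, not killing.

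Consequently your argument for (1) --- expressing $G^2_1 g$ as a subordinator-weighted average of the $G^1_\beta g$'s --- has no basis: there is no such resolvent identity in this setting. With the correct definition the proof of (1) is direct: from $\mathcal{E}^2_1(u,u)=\mathcal{E}^1_1(u,u)+\nu(u\otimes u)$ one controls the possibly negative term $\nu(u\otimes u)$ by $\tfrac12(\nu^1(u^2)+\nu^2(u^2))$ via Cauchy--Schwarz, and the latter is dominated by a constant times $\mathcal{E}^2_1(u,u)$ (compare \eqref{MEL} and \eqref{UVM12} in the proof of Theorem~\ref{FKT}). For (2) your conclusion about nests is in the right direction, but the mechanism is not a Bochner-type resolvent domination; it is the $\tilde{\mathcal{E}}^1_1$-density of $\mathcal{F}^2$ in $\mathcal{F}^1$ combined with the norm comparison from (1), which together force any $\mathcal{E}^2$-nest to be an $\mathcal{E}^1$-nest.
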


The following theorem is an analogy of Theorem~3.4 and 3.5 of  \cite{JGY3} which characterize the relationship between killing transform of Markov processes and subordination of Dirichlet forms.

\begin{theorem}\label{EMSS}
	Let $X$ and $(\mathcal{E},\mathcal{F})$ be given above. If $M\in \text{MF}^*_+$ and $(\mathcal{E}^M,\mathcal{F}^M)$ is the properly associated quasi-regular semi-Dirichlet form of the subprocess $X^M$, then $(\mathcal{E}^M,\mathcal{F}^M)$ is strongly subordinate to $(\mathcal{E},\mathcal{F})$. On the contrary assume $(\mathcal{E}',\mathcal{F}')$ to be another quasi-regular semi-Dirichlet form on $L^2(E,m)$ and $X'$ its associated Markov process.  If $(\mathcal{E}',\mathcal{F}')$ is strongly subordinate to $(\mathcal{E},\mathcal{F})$, then there exists an $M\in \text{MF}_+$ such that $X'$ is the subprocess of $X$ killed by $M$.
\end{theorem}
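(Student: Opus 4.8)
The plan is to treat the two directions separately, importing the machinery of \cite{JGY3} wherever the arguments there are insensitive to the duality assumption. For the forward direction, suppose $M\in\text{MF}^*_+$. By Theorem~\ref{FKT} (or Theorem~\ref{GCT}) the subprocess $X^M$ has a properly associated quasi-regular semi-Dirichlet form $(\mathcal{E}^M,\mathcal{F}^M)$ of the explicit form \eqref{EFMD2}, so $\mathcal{F}^M=\mathcal{F}_{E_M}\cap L^2(E,\nu_{\bar M}^1+\nu_{\bar M}^2)\subset\mathcal{F}$ and $\mathcal{E}^M_\lambda(u,u)\geq\mathcal{E}_\lambda(u,u)$ for large $\lambda$ by \eqref{MEL} together with \eqref{UVM12}; this gives the comparison of forms and of resolvents needed to verify the defining conditions of (strong) subordination in the sense of the semi-Dirichlet analogue of Definition~3.1 of \cite{JGY3}. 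The strong part — that the killing is "local" in the appropriate sense, i.e. that $G_\alpha f$ and $V^\alpha f$ agree off a set governed by the killing — follows from the identity $G_\alpha f=V^\alpha f$ established inside the proof of Theorem~\ref{FKT}, exactly as in Theorem~3.4 of \cite{JGY3}.

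For the converse, assume $(\mathcal{E}',\mathcal{F}')$ is strongly subordinate to $(\mathcal{E},\mathcal{F})$ with associated process $X'$. By Lemma~\ref{LSS}(1) there is $C>0$ with $\mathcal{E}_1(u,u)\leq C\mathcal{E}'_1(u,u)$ on $\mathcal{F}'$, and by Lemma~\ref{LSS}(2) every $\mathcal{E}'$-nest is an $\mathcal{E}$-nest, so $\mathcal{E}'$-q.c.\ functions are $\mathcal{E}$-q.c.; in particular the two forms share a common class of nests and exceptional sets. The comparison of resolvents coming from strong subordination shows that $G'_\alpha\leq G_\alpha$ in the pointwise (order) sense on positive functions, which is precisely the condition that lets one run the construction of \cite{JGY3}: one builds the multiplicative functional $M$ of $X$ by taking, for a fixed strictly positive $f\in pL^2(E,m)\cap b\mathcal{B}$, the $X$-excessive function $G_1 f$ and the $X$-excessive function $G'_1 f$, noting $G'_1 f\leq G_1 f$, and recovering $M$ as the exact regularization of the ratio-type supermartingale multiplicative functional $h(X_t)\,\mathrm{e}^{-\!\int}\!\cdots$ — concretely, one obtains $M$ so that $P^x_M(f(X_t)) = Q'_t f(x)$ where $(Q'_t)$ is the semigroup of $X'$. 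That $M$ lies in $\text{MF}_+$ (not merely that it is a supermartingale MF) uses the $m$-exact regularization theorems, valid here because every semipolar set is $m$-polar under the sector condition; this is where Lemma~\ref{DECOM} and the weak-duality partner $\check X$ from \S\ref{THWD} enter, replacing the duality used in \cite{JGY3}.

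The step I expect to be the main obstacle is showing that the multiplicative functional $M$ produced in the converse direction is genuinely decreasing and exact — i.e.\ belongs to $\text{MF}_+(X)$ as defined via \cite{JGY}, and not just to some larger class of supermartingale multiplicative functionals — without a genuine dual process. The argument in \cite{JGY3} leans on duality at exactly this point to identify the Stieltjes logarithm of $M$ with a well-defined additive functional and to perfect/regularize it. Here one must instead work under $P^{\hat m}$ with $\hat m=\hat g\cdot m$ excessive, use that every semipolar set is $\hat m$-polar to apply Sharpe's exact regularization (as in the proof of Lemma~\ref{DECOM}), and then transfer the conclusion back to $P^m$ via the equivalence of $m$ and $\hat m$. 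Once $M\in\text{MF}_+$ is in hand, identifying $X'$ with $X^M$ is straightforward: both have the same semigroup on $L^2(E,m)$ by construction, and proper association of quasi-regular semi-Dirichlet forms with their processes (up to $m$-equivalence) forces $X'=X^M$ up to the usual exceptional set, completing the proof.
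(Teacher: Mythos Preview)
The paper itself omits the proof of this theorem, stating only that ``the idea of proof is essentially the same'' as in \cite{JGY3} and referring the reader there. Your overall strategy---transplanting the arguments of \cite{JGY3} to the semi-Dirichlet setting, and in the converse direction replacing genuine duality by the weak-duality pair $(X,\check X)$ relative to $\hat m=\hat g\cdot m$ from \S\ref{THWD}---is therefore exactly the approach the paper intends, and your identification of the main obstacle (perfecting and proving exactness of the constructed MF without full duality, via the $\hat m$-polar reduction as in Lemma~\ref{DECOM}) is on target.

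There is, however, a concrete misreading in your forward-direction sketch. The identity ``$G_\alpha f=V^\alpha f$'' inside the proof of Theorem~\ref{FKT} does \emph{not} relate the resolvent of $X$ to that of $X^M$: in that passage $G_\alpha$ is a local notation for the abstract resolvent of the lower-bounded closed form $(\mathcal{E}^M,\mathcal{F}^M)$ just constructed, and the identity merely says this abstract resolvent coincides with the probabilistic resolvent $V^\alpha$ of $X^M$. It yields no ``local agreement'' between the resolvents of $X$ and $X^M$, and strong subordination (in the sense adapted from Definition~3.1 of \cite{JGY3}) is not such a statement---Lemma~\ref{LSS}(2) shows the ``strong'' qualifier concerns nests. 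To verify strong subordination you should argue directly from the explicit description \eqref{EFMD2}: the inclusion $\mathcal{F}^M\subset\mathcal{F}$ is immediate; the form comparison comes \emph{with a constant} from the inequality $(1+\tfrac{1}{2c'})\,\mathcal{E}^M_\lambda(u,u)\geq\mathcal{E}_\lambda(u,u)$ derived in the proof of Theorem~\ref{FKT} (your bare claim $\mathcal{E}^M_\lambda(u,u)\geq\mathcal{E}_\lambda(u,u)$ is unjustified, since $\nu_{\bar M}(u\otimes u)$ need not be nonnegative); and the nest/strong part then follows from this comparison together with $E_M=E$ q.e.\ for $M\in\text{MF}_+$, exactly as in the proof of Theorem~3.4 of \cite{JGY3}.
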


\section*{Acknowledgement}
The authors would like to thank the referees for their careful reading and many helpful comments on this paper.

\appendix
\renewcommand{\thesubsection}{\Alph{subsection}}

\section{Introduction to the (lower bounded) semi-Dirichlet forms}\label{SDF}

The definition of the semi-Dirichlet form is as follows.

		\begin{definition}\label{DEF5}
    Let $E$ be a metrizable Lusin space and $m$  a $\sigma$-finite positive measure on the Borel $\sigma$-algebra $\mathcal{B}$ of $E$. A bilinear form $(\mathcal{E},\mathcal{F})$ on $L^2(E,m)$ with $\mathcal{F}$ being dense in $L^2(E,m)$ is called a \emph{coercive closed form} if
    \begin{description}
      \item[($\mathcal{E}$1)] $(\tilde{\mathcal{E}},\mathcal{F})$ is positive definite and closed on $L^2(E,m)$, where
        \begin{equation}
            \tilde{\mathcal{E}}(u,v):= \frac{1}{2}(\mathcal{E}(u,v)+\mathcal{E}(v,u)),\quad  u,v\in \mathcal{F}
        \end{equation}
        is the \emph{symmetric part} of $\mathcal{E}$.
      \item[($\mathcal{E}$2)] (Sector condition) There exists a constant $K>0$ such that
       \begin{equation}
            \mathcal{E}_1(u,v)\leq K\mathcal{E}_1(u,u)^{\frac{1}{2}}\mathcal{E}_1(v,v)^{\frac{1}{2}},\quad u,v\in \mathcal{F}.
       \end{equation}
    \end{description}
Moreover $(\mathcal{E},\mathcal{F})$ is called a \emph{semi-Dirichlet form} on $L^2(E,m)$ if in addition:
   \begin{description}
     \item[($\mathcal{E}$3)] (Semi-Dirichlet property) For every $u\in \mathcal{F}$, $u^+\wedge 1\in \mathcal{F}$ and
     \[
        \mathcal{E}(u-u^+\wedge 1,u^+\wedge 1)\geq 0.
     \]
   \end{description}
\end{definition}

Note that the semi-Dirichlet property $(\mathcal{E}3)$ is in accordance with \cite{PJF} but contrary to \cite{MOR} and \cite{OY}. In fact in \cite{MOR} and \cite{OY} the semi-Dirichlet property means that for every $u\in \mathcal{F}$, $u^+\wedge 1\in \mathcal{F}$ and $\mathcal{E}(u^+\wedge 1,u-u^+\wedge 1)\geq 0$. In other words, the dual form $\hat{\mathcal{E}}(u,v):=\mathcal{E}(v,u)$ for any $u,v\in \mathcal{F}$ of the semi-Dirichlet form $(\mathcal{E,F})$ in Definition~\ref{DEF5} is a semi-Dirichlet form in the context of \cite{MOR} and \cite{OY}. Denote the \emph{antisymmetric part} of $\mathcal{E}$ by
\begin{equation}
    \check{\mathcal{E}}(u,v)=\frac{1}{2}(\mathcal{E}(u,v)-\mathcal{E}(v,u)), \quad  u,v\in \mathcal{F}.
\end{equation}
Obviously if $(\mathcal{E},\mathcal{F})$ is symmetric, then $\check{\mathcal{E}}=0$. The \emph{extended Dirichlet space} of $(\mathcal{E},\mathcal{F})$ is denoted by $\mathcal{F}_\text{e}$.
Let $(T_t)_{t\geq0},(G_\alpha)_{\alpha\geq 0}$ (resp. $(\hat{T}_t)_{t\geq0},(\hat{G}_\alpha)_{\alpha\geq0}$) denote the \emph{semigroup} and \emph{resolvent} (resp. \emph{co-semigroup} and \emph{co-resolvent}) of the semi-Dirichlet form $(\mathcal{E},\mathcal{F})$. In particular
\begin{equation}\label{EQUVM}
    (u,v)_m= \mathcal{E}_\alpha(v, G_\alpha u)=\mathcal{E}_\alpha(\hat{G}_\alpha u,v),\quad  u\in L^2(E,m),v\in \mathcal{F},\alpha>0.
\end{equation}
The semi-Dirichlet property $(\mathcal{E}3)$ is equivalent to  the Markov property:
if $0\leq u\leq 1$ and $u\in L^2(E,m)$, then $0\leq T_tu\leq 1$ for any $t\geq 0$ (equivalently $0\leq \alpha G_\alpha u\leq 1$ for any $\alpha\geq 0$).

We refer the quasi-notions of semi-Dirichlet forms, say 
\emph{$\mathcal{E}$-nest}, \emph{$\mathcal{E}$-exceptional} set, \emph{capacity} (denoted by Cap), \emph{$\mathcal{E}$-quasi-everywhere} ($\mathcal{E}$-q.e. in abbreviation), \emph{$\mathcal{E}$-quasi-continuous} ($\mathcal{E}$-q.c. in abbreviation),  \emph{quasi-regularity}, \emph{$m$-polar} and \emph{semipolar} etc, to \cite{BG}, \cite{PJF}, \cite{MR}, \cite{MOR} and \cite{OY}. Note that $N$ is $\mathcal{E}$-exceptional if and only if $\text{Cap}(N)=0$. The $\mathcal{E}$-q.c. $m$-version of $u$ is usually denoted by $\tilde{u}$. 
Every quasi-regular semi-Dirichlet form has a properly associated $m$-tight special standard process $X$, i.e. $P_tu(x):=E^x(u(X_t))$ is an $\mathcal{E}$-q.c. $m$-version of $T_tu$ for all $u\in L^2(E,m)$. Moreover if $(\mathcal{E,F})$ is quasi-regular then every function in $\mathcal{F}$ has an $\mathcal{E}$-q.c $m$-version.  Under the sector condition, a semipolar set is $m$-polar.

Fix a constant $\alpha\in [0,\infty)$. A positive function $u\in L^2(E,m)$ is called \emph{$\alpha$-excessive} (resp. \emph{$\alpha$-coexcessive}) if $e^{-\alpha t} T_tu\leq u$ (resp. $e^{-\alpha t} \hat{T}_tu\leq u$) for all $t>0$. Note that $u$ is $\alpha$-excessive (resp. $\alpha$-coexcessive) if and only if $\beta G_{\alpha+\beta}u\leq u$ (resp. $\beta \hat{G}_{\alpha+\beta}u\leq u$) for all $\beta>0$. In particular for any positive function $u\in L^2(E,m)$, $G_\alpha u$ is $\alpha$-excessive and $\hat{G}_\alpha u$ is $\alpha$-coexcessive with $\alpha>0$ ($\alpha\geq 0$ if $X$ is transient). A $0$-(co)excessive function is always called \emph{(co)excessive} in abbreviation. The following lemma will be used to prove Theorem~\ref{GCT}. We include the proof here for completion.

\begin{lemma}\label{EXC}
  Suppose that $u\in \mathcal{F}$ is $\alpha$-coexcessive and $v\in\mathcal{F}$. Then $u\wedge v\in \mathcal{F}$ and
\[
	\mathcal{E}_{\alpha}(u\wedge v,u\wedge v)\leq \mathcal{E}_{\alpha}(u\wedge v,v).
\]
In particular if $\alpha>0$ then $\mathcal{E}_\alpha(u\wedge v,u\wedge v)\leq K_\alpha^2\mathcal{E}_\alpha(v,v)$ for some constant $K_\alpha>0$.
\end{lemma}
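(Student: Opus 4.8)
The plan is to mimic the classical proof that shows, for a coexcessive $u$, that truncation decreases energy. First I would recall the variational characterization of the resolvent from \eqref{EQUVM}: for any $\beta>0$ and any positive $f\in L^2(E,m)$, the function $\hat G_\beta f$ is $0$-coexcessive (and more generally $\hat G_{\alpha+\beta}f$ is $\alpha$-coexcessive), and $\mathcal{E}_\beta(w,\hat G_\beta f)=(w,f)_m$ for all $w\in\mathcal{F}$. Since $u$ is $\alpha$-coexcessive and lies in $\mathcal{F}$, the standard approximation is $u=\lim_{\beta\to\infty}\beta\hat G_{\alpha+\beta}u$ (strongly in $L^2$, and with $\tilde{\mathcal{E}}_\alpha$-boundedness of the approximants), so it suffices to prove the inequality for $u_\beta:=\beta\hat G_{\alpha+\beta}u$ and then pass to the limit using the closedness of $(\tilde{\mathcal{E}},\mathcal{F})$ and the Banach--Saks/Cesàro argument that is routine in Dirichlet form theory. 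For such $u_\beta$ we have the key positivity: since $u_\beta=\hat G_{\alpha+\beta}(\beta u)$ with $\beta u\ge 0$, and $u_\beta\le u$ pointwise $m$-a.e. by $\alpha$-coexcessivity, we get that for any $0\le g\in L^2(E,m)$,
\[
\mathcal{E}_\alpha\big(g,\,u-u_\beta\big)=\mathcal{E}_\alpha(g,u)-\mathcal{E}_\alpha(g,\hat G_{\alpha+\beta}(\beta u))=\mathcal{E}_\alpha(g,u)-\beta\,\mathcal{E}_{\alpha+\beta}(g,\hat G_{\alpha+\beta}(\beta u))\ \text{-type identities},
\]
which, combined with the semi-Dirichlet property, will yield $\mathcal{E}_\alpha(g,u_\beta)\le\mathcal{E}_\alpha(g,u)$ for suitable nonnegative test functions $g$; I will need to be careful to use the correct form of the Markov/coexcessivity inequality here.

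Next I would carry out the truncation step proper. Write $u\wedge v = v-(v-u)^+$. Since $(\mathcal{E},\mathcal{F})$ is a semi-Dirichlet form, $\mathcal{F}$ is stable under the unit contraction and, more to the point, under $w\mapsto w^+\wedge c$ type operations, so $(v-u)^+\in\mathcal{F}$ whenever $v-u\in\mathcal{F}$; hence $u\wedge v\in\mathcal{F}$. The heart of the matter is the inequality
\[
\mathcal{E}_\alpha(u\wedge v,\;u-u\wedge v)\ \ge\ 0,
\]
which is exactly the statement $\mathcal{E}_\alpha(u\wedge v,(v-u)^+ + (u-v)^+ - (u-v)^+)\ge 0$; rearranged, $u-u\wedge v=(u-v)^+\ge 0$ and it is supported where $u\ge v$, i.e. where $u\wedge v=v$. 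So I want $\mathcal{E}_\alpha(v,(u-v)^+)\ge 0$ on the set $\{u\ge v\}$, which should follow from the coexcessivity of $u$ via the positivity inequality established in the first paragraph applied with $g$ an approximation of $(u-v)^+$, together with the contraction property $(u-v)^+=(u-v)^+\wedge\text{(something)}$. Once $\mathcal{E}_\alpha(u\wedge v,u-u\wedge v)\ge 0$ is in hand, we immediately get
\[
\mathcal{E}_\alpha(u\wedge v,u\wedge v)=\mathcal{E}_\alpha(u\wedge v,v)-\mathcal{E}_\alpha(u\wedge v,u-u\wedge v)\ \le\ \mathcal{E}_\alpha(u\wedge v,v),
\]
which is the displayed inequality.

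For the final clause, suppose $\alpha>0$. Then $(\mathcal{E}_\alpha,\mathcal{F})$ satisfies the sector condition: $|\mathcal{E}_\alpha(w_1,w_2)|\le K_\alpha\,\mathcal{E}_\alpha(w_1,w_1)^{1/2}\mathcal{E}_\alpha(w_2,w_2)^{1/2}$ for some $K_\alpha>0$ (this is just \eqref{EQUVM}-level structure, equivalently the sector condition for the $\alpha$-shifted form, which holds since $(\mathcal{E},\mathcal{F})$ is a coercive closed form). Applying this to $w_1=u\wedge v$, $w_2=v$ in the already-proved inequality gives
\[
\mathcal{E}_\alpha(u\wedge v,u\wedge v)\le \mathcal{E}_\alpha(u\wedge v,v)\le K_\alpha\,\mathcal{E}_\alpha(u\wedge v,u\wedge v)^{1/2}\,\mathcal{E}_\alpha(v,v)^{1/2},
\]
and dividing by $\mathcal{E}_\alpha(u\wedge v,u\wedge v)^{1/2}$ (trivial if it vanishes) and squaring yields $\mathcal{E}_\alpha(u\wedge v,u\wedge v)\le K_\alpha^2\,\mathcal{E}_\alpha(v,v)$.

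I expect the main obstacle to be the first paragraph: making rigorous the passage from the resolvent approximants $u_\beta=\beta\hat G_{\alpha+\beta}u$ back to $u$ while preserving the truncation inequality, and in particular getting the signs right in the positivity inequality coming from coexcessivity together with the semi-Dirichlet (rather than Dirichlet) property. In the symmetric or non-symmetric Dirichlet case one uses the dual Markov property; here one only has the semi-Dirichlet property for $(\mathcal{E},\mathcal{F})$ itself, so one must be attentive to whether the relevant Markovian inequality is for $T_t$ or $\hat T_t$, and route the argument through the co-resolvent accordingly. The truncation-stability of $\mathcal{F}$ and the final sector-condition bookkeeping are routine by comparison.
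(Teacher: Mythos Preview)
Your plan contains a genuine gap: you are aiming at the wrong inequality. You write
\[
\mathcal{E}_\alpha(u\wedge v,u\wedge v)=\mathcal{E}_\alpha(u\wedge v,v)-\mathcal{E}_\alpha(u\wedge v,\,u-u\wedge v),
\]
but this identity is false: since $u\wedge v = v-(v-u)^+$, the correct bilinear identity is
\[
\mathcal{E}_\alpha(u\wedge v,u\wedge v)=\mathcal{E}_\alpha(u\wedge v,v)-\mathcal{E}_\alpha(u\wedge v,\,v-u\wedge v),
\]
so the inequality you must establish is $\mathcal{E}_\alpha(u\wedge v,(v-u)^+)\ge 0$, not $\mathcal{E}_\alpha(u\wedge v,(u-v)^+)\ge 0$. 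The latter is the wrong target in two ways: it does not yield the desired conclusion, and it is not deducible from the hypotheses, since on the support of $(u-v)^+$ one has $u\wedge v=v$, and nothing is assumed about $v$.

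Once the target is corrected to $\mathcal{E}_\alpha(u\wedge v,(v-u)^+)\ge 0$, the detour through resolvent approximants $u_\beta=\beta\hat G_{\alpha+\beta}u$ of $u$ is unnecessary. The paper works directly with the approximating form in its dual version,
\[
\mathcal{E}_\alpha(u\wedge v,(v-u)^+)=\lim_{\beta\to\infty}\beta\big(u\wedge v-\beta\hat G_{\alpha+\beta}(u\wedge v),\,(v-u)^+\big)_m,
\]
and observes two pointwise facts: $(u\wedge v)(v-u)^+=u\,(v-u)^+$ (since $u\wedge v=u$ on $\{v>u\}$), and $\hat G_{\alpha+\beta}(u\wedge v)\le \hat G_{\alpha+\beta}u$ by positivity of the co-resolvent. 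These give a lower bound $\beta(u-\beta\hat G_{\alpha+\beta}u,(v-u)^+)_m\ge 0$ by $\alpha$-coexcessivity of $u$, and the inequality follows. Your final sector-condition step for $\alpha>0$ is correct and matches the paper.
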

\begin{proof}
Clearly $u\wedge v\in \mathcal{F}$ and $v=u\wedge v+ (v-u)_+$. It follows from the property of the approximating form that
\[\begin{aligned}
	\mathcal{E}_\alpha(u\wedge v,v-u\wedge v)&=\lim_{\beta\rightarrow \infty}\beta(u\wedge v-\beta \hat{G}_{\beta+\alpha}(u\wedge v),v-u\wedge v)_m\\
				&=\lim_{\beta\rightarrow \infty}\beta(u\wedge v-\beta \hat{G}_{\beta+\alpha}(u\wedge v),(v-u)_+)_m.
\end{aligned}\]
Note that $(u\wedge v)(x)(v-u)_+(x)= u(x)(v-u)_+(x)$ for any $x\in E$ and $\hat{G}_{\alpha+\beta}(u\wedge v)\leq \hat{G}_{\alpha+\beta}(u)$ because $\hat{G}_{\alpha+\beta}$ is positivity preserving. Since $u$ is $\alpha$-coexcessive we can deduce that $\beta \hat{G}_{\beta+\alpha}u\leq u$ and
\[
	\mathcal{E}_\alpha(u\wedge v,v-u\wedge v)\geq \lim_{\beta\rightarrow \infty}\beta(u-\beta \hat{G}_{\beta+\alpha}u,(v-u)_+)_m\geq 0.
\]
If $\alpha>0$ then it follows from the sector condition that  there exists a constant $K_\alpha>0$ such that
\[
	\mathcal{E}_{\alpha}(u\wedge v,u\wedge v)\leq \mathcal{E}_{\alpha}(u\wedge v,v)\leq K_\alpha \mathcal{E}_{\alpha}(u\wedge v,u\wedge v)^{\frac{1}{2}}\mathcal{E}_{\alpha}( v,v)^{\frac{1}{2}}.
\]
Therefore $\mathcal{E}_\alpha(u\wedge v,u\wedge v)\leq K_\alpha^2\mathcal{E}_\alpha(v,v)$.  
\end{proof}

The \emph{lower bounded semi-Dirichlet form} with a non-negative parameter $\alpha_0$ is a weaker form than the (non-negative) semi-Dirichlet form. Its definition is as follows.

\begin{definition}
A dense bilinear form $(\mathcal{E},\mathcal{F})$ on $L^2(E,m)$ is called the \emph{lower bounded closed form} if there exists a constant $\alpha_0\geq 0$ such that
\begin{description}
  \item[($\mathcal{E}\text{1}'$)] (Lower bounded) $\mathcal{E}_{\alpha_0}(u,u)\geq 0$ for any $u\in \mathcal{F}$ and $\mathcal{F}$ is  a Hilbert space with the norm $||\cdot ||_{\tilde{\mathcal{E}}_{\alpha}}$ for any $\alpha>\alpha_0$.
  \item[($\mathcal{E}\text{2}'$)] (Sector condition) There exists a constant $K>0$ such that
\[
	\mathcal{E}_{\alpha_0}(u,v)\leq K\mathcal{E}_{\alpha_0}(u,u)^{\frac{1}{2}}\mathcal{E}_{\alpha_0}(v,v)^{\frac{1}{2}}.
\]
\end{description}
Moreover $(\mathcal{E,F})$ is called the \emph{lower bounded semi-Dirichlet form with the parameter} $\alpha_0$ if in addition $(\mathcal{E,F})$ also satisfies the semi-Dirichlet property ($\mathcal{E}\text{3}$) in Definition~\ref{DEF5}.
\end{definition}

For any lower bounded closed form $(\mathcal{E},\mathcal{F})$ there exist two unique strongly continuous semigroups (not necessarily to be contractive) $(T_t)_{t\geq 0}, (\hat{T}_t)_{t\geq 0}$ on $L^2(E,m)$ such that $||T_t||\leq \text{e}^{\alpha_0 t}, ||\hat{T}_t||\leq \text{e}^{\alpha_0 t}$ and similarly their corresponding resolvents satisfy \eqref{EQUVM} for any $f\in L^2(E,m),u\in \mathcal{F}$ and $\alpha>\alpha_0$.
 Moreover define the approximating form $\mathcal{E}^\alpha$ by
\[
	\mathcal{E}^\alpha(u,v):=\alpha(u, v-\alpha G_\alpha v)_m,\quad u,v\in L^2(E,m),
\]
then $u\in \mathcal{F}$ if and only if $\overline{\lim}_{\alpha\rightarrow \infty}\mathcal{E}^\alpha (u,u)<\infty$ and if $u,v\in \mathcal{F}$, then $\lim_{\alpha\rightarrow \infty} \mathcal{E}^\alpha (u,v)=\mathcal{E}(u,v)$. The following lemma is used to prove Theorem~\ref{FKT} and its proof is obvious.


\begin{lemma}\label{LBTN}
	Let $(\mathcal{E},\mathcal{F})$ be a lower bounded closed form and $T_t, \hat{T}_t,G_\alpha,\hat{G}_\alpha$ the associated strongly continuous semigroups and resolvents. Then $(\mathcal{E},\mathcal{F})$ is non-negative if and only if the semigoup $(T_t)_{t\geq 0}$ (or equivalently the resolvent $(G_\alpha)_{\alpha\geq 0}$) is contractive, i.e. $||T_t||\leq 1$ for any $t\geq 0$ (or equivalently $||\alpha G_\alpha ||\leq 1$ for any $\alpha\geq 0$).
\end{lemma}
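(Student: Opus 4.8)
The plan is to tie non-negativity of the form to contractivity of the resolvent through the fundamental identity $\mathcal{E}(v,w)=(v,-Lw)_m$ valid for $v,w\in D(L)$, and then to pass between the resolvent and the semigroup by the Hille--Yosida theorem. First I would record this identity: for $f\in L^2(E,m)$ and $\alpha>\alpha_0$, put $w=G_\alpha f\in D(L)$ so that $f=(\alpha-L)w$; since the resolvents of a lower bounded closed form satisfy \eqref{EQUVM}, we have $(f,v)_m=\mathcal{E}_\alpha(v,w)$ for every $v\in\mathcal{F}$, and expanding $\mathcal{E}_\alpha=\mathcal{E}+\alpha(\cdot,\cdot)_m$ and $f=(\alpha-L)w$ gives $-(Lw,v)_m=\mathcal{E}(v,w)$. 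In particular $\mathcal{E}(w,w)=-(Lw,w)_m$ for $w\in D(L)$.

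For the direction ``non-negative $\Rightarrow$ contractive'', take $f\in L^2(E,m)$ and $\alpha>\alpha_0$ and set $w=G_\alpha f$. Then $(f,w)_m=\mathcal{E}_\alpha(w,w)=\mathcal{E}(w,w)+\alpha\|w\|_m^2\ge\alpha\|w\|_m^2$, so by Cauchy--Schwarz $\|f\|_m\|w\|_m\ge\alpha\|w\|_m^2$, i.e. $\|\alpha G_\alpha\|\le1$ for all $\alpha>\alpha_0$. By the Hille--Yosida theorem this forces $\|T_t\|\le1$ for every $t\ge0$, and then $\|\alpha G_\alpha\|\le1$ for every $\alpha>0$ (the case $\alpha=0$ being trivial).

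For the converse, assume $\|T_t\|\le1$ for all $t\ge0$. For $u\in D(L)$ the map $t\mapsto\|T_tu\|_m^2$ is differentiable with derivative $2(T_tu,LT_tu)_m=-2\mathcal{E}(T_tu,T_tu)$, using $T_tu\in D(L)\subset\mathcal{F}$ and the identity from the first step. Since $\|T_tu\|_m\le\|u\|_m$ with equality at $t=0$, the right derivative of $t\mapsto\|T_tu\|_m^2$ at $0$ is nonpositive, whence $\mathcal{E}(u,u)\ge0$ for all $u\in D(L)$. Because $D(L)$ is $\tilde{\mathcal{E}}_\alpha$-dense in $\mathcal{F}$ and the map $u\mapsto\mathcal{E}(u,u)=\tilde{\mathcal{E}}(u,u)$ is continuous in the $\tilde{\mathcal{E}}_\alpha$-norm for any fixed $\alpha>\alpha_0$ (by the sector condition together with the bound $\|\cdot\|_m\le(\alpha-\alpha_0)^{-1/2}\|\cdot\|_{\tilde{\mathcal{E}}_\alpha}$), non-negativity extends to all of $\mathcal{F}$; that is, $(\mathcal{E},\mathcal{F})$ is non-negative.

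The argument is essentially routine, which is why the lemma is labelled obvious; the only points needing a little care are the derivation of $\mathcal{E}(v,w)=(v,-Lw)_m$ from \eqref{EQUVM} and the density-plus-continuity step transferring non-negativity from $D(L)$ to $\mathcal{F}$, while the appeal to Hille--Yosida to upgrade the resolvent bound on $(\alpha_0,\infty)$ to the semigroup bound on all of $[0,\infty)$ is standard.
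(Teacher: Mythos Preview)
Your proof is correct. The paper does not actually supply a proof of this lemma---it simply states that ``its proof is obvious''---so your argument appropriately fills in the standard details: the resolvent identity \eqref{EQUVM} gives $(f,G_\alpha f)_m=\mathcal{E}_\alpha(G_\alpha f,G_\alpha f)$, from which non-negativity yields $\|\alpha G_\alpha\|\le 1$ and hence $\|T_t\|\le 1$ via Hille--Yosida, while conversely differentiating $t\mapsto\|T_tu\|_m^2$ at $t=0$ recovers $\mathcal{E}(u,u)\ge 0$ on $D(L)$ and density extends this to $\mathcal{F}$.
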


\section{The correspondence between the PCAFs and smooth measures}\label{PCAF}

We refer the definition of  the  additive functionals (AFs) of $X$ to Definition~3.16 of  \cite{PJF}.
Note that two AFs $A$ and $B$ are $m$-equivalent if and only if $P^m(A_t\neq B_t)=0$ for all $t>0$.

 For any PCAF $(A_t)_{t\geq0}$, there exists a unique $\sigma$-finite measure $\mu_A$ on $E$ charging no $\mathcal{E}$-exceptional sets such that
\begin{equation}\label{RFC}
    (f,\tilde{\hat{h}})_{\mu_A}=\lim_{t\rightarrow 0}\frac{1}{t}E^{\hat{h}\cdot m}\int_0^tf(X_s)dA_s=\lim_{\beta\rightarrow \infty}\beta(\hat{h},U^\beta_Af)_m
\end{equation}
for any non-negative function $f$ and $\alpha$-coexcessive function $\hat{h}\in \mathcal{F}$ with $\alpha\geq 0$. The sequences appeared in \eqref{RFC} are increasing relative to $t\downarrow 0$ or $\beta\uparrow \infty$. Here
\begin{equation}
     U^\beta_Af(x):=E^x\int_0^\infty e^{-\beta t}f(X_t)dA_t,\quad x\in E.
\end{equation}
The unique $\sigma$-finite measure $\mu_A$ relative to $(A_t)_{t\geq 0}$ is also called the \emph{Revuz measure} of the PCAF $(A_t)_{t\geq 0}$.
On the contrary a measure $\mu$ on $E$ charging no $\mathcal{E}$-exceptional sets is the Revuz measure of a PCAF of $X$ if and only if one of the following equivalent conditions holds (see Theorem~4.22 of  \cite{PJF}):
 \begin{description}
   \item[(1)] There is a q.c. function $f$ such that  $f>0$ q.e. and $\mu(f)<\infty$.
   \item[(2)] There is an $\mathcal{E}$-nest $\{K_n:n\geq 1\}$ of compact subsets of $E$ such that $\mu(K_n)<\infty$ for any $n\geq 1$.
 \end{description}
 A positive measure $\mu$ on $(E,\mathcal{B}(E))$ is called \emph{smooth with respect to $(\mathcal{E},\mathcal{F})$}, denoted by $\mu\in S$, if $\mu(N)=0$ for any $\mathcal{E}$-exceptional set $N\in \mathcal{B}(E)$ and $\mu$ satisfies any of the above two conditions.
The PCAFs of $X$ and the smooth measures of $(\mathcal{E},\mathcal{F})$ have a one-to-one correspondence (up to the equivalence of PCAFs) by the formula \eqref{RFC}. We refer more equivalent conditions of \eqref{RFC} to Theorem~A.8 of  \cite{MMS}.
A characterization to the Revuz measure by P.J.Fitzsimmons in  \cite{PJF} is very useful to prove \text{Lemma \ref{UFV}}.

\begin{lemma}[Corollary 4.16,  \cite{PJF}]\label{RFP}
    Let $A$ be any PCAF of $X$ and $\mu_A$ the associated Revuz measure of $A$. Then
    \begin{equation}
        \mathcal{E}_\alpha(h,U_A^\alpha f)=\mu_A(\tilde{h}f)
    \end{equation}
    for any $\alpha>0$, all $h\in \mathcal{F}$ and all $f\in \mathcal{B}^+$ for which $U_A^\alpha f\in \mathcal{F}$. If $X$ is transient,
    \[
         \mathcal{E}(h,U_A f)=\mu_A(\tilde{h}f)
    \]
    for all $h\in \mathcal{F}_\text{e}$ and all $f\in \mathcal{B}^+$ for which $U_A f\in \mathcal{F}_\text{e}$.
\end{lemma}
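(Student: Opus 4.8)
\emph{Overall approach.} The plan is to prove the $\alpha>0$ identity first on the class $\{\hat G_\alpha g : g\in L^2(E,m)_+\}$, where it reduces to the Revuz correspondence formula \eqref{RFC} together with the co-resolvent property \eqref{EQUVM}, then to extend it to all of $\mathcal F$ by density, and finally to deduce the transient statement by letting $\alpha\downarrow 0$. In detail: fix $g\in L^2(E,m)_+$ and set $h=\hat G_\alpha g$; by the remarks in Appendix~\ref{SDF} this $h$ lies in $\mathcal F$ and is $\alpha$-coexcessive, with $\mathcal E$-q.c.\ $m$-version $\tilde h=\widetilde{\hat G_\alpha g}$. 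By \eqref{EQUVM} (using $U_A^\alpha f\in\mathcal F$) one has $\mathcal E_\alpha(\hat G_\alpha g,\,U_A^\alpha f)=(g,U_A^\alpha f)_m$; inserting the resolvent identity $U_A^\alpha f-U_A^{\alpha+n}f=nG_\alpha U_A^{\alpha+n}f$ (Proposition~3.4 of \cite{SMJ}) and the adjointness of $G_\alpha$ and $\hat G_\alpha$ on $L^2(E,m)$ gives
\[
  (g,U_A^\alpha f)_m=(g,U_A^{\alpha+n}f)_m+n(\hat G_\alpha g,\,U_A^{\alpha+n}f)_m .
\]
As $n\to\infty$ the first term tends to $0$ by dominated convergence (since $0\le U_A^{\alpha+n}f\le U_A^\alpha f\in L^2(E,m)$ is finite $m$-a.e., so $U_A^{\alpha+n}f\downarrow 0$), while $n(\hat G_\alpha g,U_A^{\alpha+n}f)_m=\tfrac{n}{\alpha+n}\,(\alpha+n)(\hat G_\alpha g,U_A^{\alpha+n}f)_m\to\mu_A(\widetilde{\hat G_\alpha g}\,f)$ by \eqref{RFC} applied to the $\alpha$-coexcessive function $\hat G_\alpha g$. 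Hence $\mathcal E_\alpha(\hat G_\alpha g,U_A^\alpha f)=\mu_A(\tilde h f)$, and by linearity the identity holds for every $h\in\hat G_\alpha L^2(E,m)$, where moreover $\mu_A(\tilde h f)=(g,U_A^\alpha f)_m<\infty$.

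\emph{Extension to $\mathcal F$.} The space $\hat G_\alpha L^2(E,m)$ is $\tilde{\mathcal E}_\alpha$-dense in $\mathcal F$ (a standard property of coercive closed forms), and $h\mapsto\mathcal E_\alpha(h,U_A^\alpha f)$ is $\tilde{\mathcal E}_\alpha$-continuous by the sector condition, so it suffices to check that $h\mapsto\mu_A(\tilde h f)$ passes to the limit along an approximating sequence $h_n\to h$ in $\tilde{\mathcal E}_\alpha$ with $h_n\in\hat G_\alpha L^2(E,m)$. The identity already proved on $\hat G_\alpha L^2(E,m)$ together with the sector condition makes $(\mu_A(\tilde h_n f))_n$ a Cauchy sequence of reals, and after passing to a subsequence one has $\tilde h_n\to\tilde h$ q.e., hence $\mu_A$-a.e.\ (as $\mu_A$ charges no $\mathcal E$-exceptional set); one then identifies the limit with $\mu_A(\tilde h f)$ by reducing to $h\ge 0$, truncating both $h$ and $f$, and invoking the monotone-convergence/Fatou arguments standard for Revuz measures, the hypothesis $U_A^\alpha f\in\mathcal F$ being precisely what supplies the a priori finiteness needed to make these limits legitimate.

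\emph{Transient case and the main difficulty.} For transient $X$ one may either repeat the first step with $\alpha=0$ (now $\hat G_0 g$ is coexcessive and lies in $\mathcal F_{\text{e}}$, \eqref{RFC} remains valid for $\alpha=0$, and one uses the $\mathcal F_{\text{e}}$-analogue of \eqref{EQUVM}) or let $\alpha\downarrow 0$ in the identity just proved, using $U_A^\alpha f\uparrow U_A f$ and the convergence of the approximating forms on $\mathcal F_{\text{e}}$. I expect the genuine obstacle to be the limiting step in the extension to $\mathcal F$: because $\mu_A$ may be an infinite measure and $f$ need not be bounded, one cannot argue by dominated convergence on the $\mu_A$-side directly and must instead combine quasi-continuity, the fact that $\mu_A$ does not charge $\mathcal E$-exceptional sets, and the integrability forced by $U_A^\alpha f\in\mathcal F$ to justify the passage to the limit.
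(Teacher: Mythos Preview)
The paper does not supply its own proof of this lemma; it is stated with attribution to Corollary~4.16 of \cite{PJF} and used as a black box. There is therefore nothing to compare against directly.

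That said, your argument is correct and is the standard one. The core computation you give---applying \eqref{EQUVM} with $h=\hat G_\alpha g$, invoking the resolvent identity $U_A^\alpha f-U_A^{\alpha+n}f=nG_\alpha U_A^{\alpha+n}f$, and passing to the limit via \eqref{RFC}---is in fact reproduced almost verbatim in the paper itself, in the proof of formula \eqref{BRF} (the proposition immediately following Theorem~\ref{EXIS}), where the same chain of equalities is run in the bivariate setting. So your first step is exactly in the spirit of the paper's own techniques.

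Your identification of the density/extension step as the genuine obstacle is accurate, and the route you sketch (pass to a subsequence converging q.e., use that $\mu_A$ charges no exceptional sets, then Fatou/monotone convergence after reducing to $h\ge 0$ and truncating) is workable, though the details deserve a line or two more: one convenient way to organize them is to note that the identity on $\hat G_\alpha L^2(E,m)$ already shows $h\mapsto \mu_A(\tilde h f)$ is $\tilde{\mathcal E}_\alpha$-bounded there, hence extends uniquely to a bounded functional on $\mathcal F$, and then to check that for $h\ge 0$ this extension dominates $\mu_A(\tilde h f)$ by Fatou and is dominated by it via approximation by $n\hat G_{\alpha+n}h\uparrow h$. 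The transient case follows exactly as you indicate.
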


\begin{remark}\label{IFW}
    In fact we can remove the condition $\hat{h}\in\mathcal{F}$ in \eqref{RFC}, in other words, \eqref{RFC} holds for any non-negative function $f$ and $\alpha$-coexcessive function $\hat{h}$ with $\alpha>0$ ($\alpha= 0$ if $X$ is transient).
 To see this, choose an  $\alpha$-coexcessive strictly positive q.c. function $\hat{g}\in \mathcal{F}$ (we refer its existence to Theorem 2.4.8 of  \cite{OY} for $\alpha>0$ and Proposition 3.3 of  \cite{BLNB} for $\alpha=0$ if $X$ is transient). Set $h_n:=\hat{h}\wedge n\hat{g}\in \mathcal{F}$ and $h_n$ is $\alpha$-coexcessive by Lemma~1.4.2 of \cite{OY}. Then $h_n$ has a q.c. $m$-version $\tilde{h}_n$. Define a quasi-open set
 \[
 G_n:=\{x: \tilde{h}_n(x)<n\hat{g}(x) \}.
 \]
  It follows that $\cup_{n=1}^\infty G_n=E$ q.e. and hence the function $\tilde{\hat{h}}$ defined by
    $
        \tilde{\hat{h}}(x):=\tilde{h}_n(x)  
   $ for any $x\in G_n,  n\geq 1$
    is clearly a q.c. $m$-version of $\hat{h}$. Moreover $h_n\uparrow \hat{h}$ $m$-a.e. and $\tilde{h}_n \uparrow \tilde{\hat{h}}$ q.e. Since \eqref{RFC} holds for every $h_n$, by letting $n\rightarrow \infty$, it also holds for $\hat{h}$ and its q.c. $m$-version $\tilde{\hat{h}}$.
\end{remark}

\section{Multiplicative functionals and the killing transforms}\label{DEFMF}

We refer the definition of the multiplicative functionals of $X$ to  \cite{JGY} and \cite{JGY3}.
Note that all the equalities or inequalities about MFs and AFs appeared in this section are in the sense of $P^m$-a.s.
Let $\text{MF}(X)$ (or MF if $X$ is fixed) be the set of all exact multiplicative functionals of $X$. It is convenient to suppose that $M_t=0$ for $t\geq \zeta$. Two MFs $M,N\in \text{MF}(X)$ are \emph{$m$-equivalent} provided that for each $t>0$, $M_t=N_t\; P^m$-a.s. on $\{\zeta>t\} $. For any $M\in$MF, write
\[
    S_M:=\inf\{t>0:M_t=0\},\quad E_M:=\{x\in E: P^x(M_0=1)=1  \},
\]
for the \emph{life time} and the set of \emph{permanent points} of $M$. Clearly $E_M$ is also the set of all irregular points of $S_M$, i.e. $E_M=\{x\in E: P^x(S_M>0)=1 \}$. In particular $E_M$ is a finely open set. If $E_M$ is nearly optional, $M$ is called a \emph{right MF}. Further let
\[
    \text{MF}_+:=\{M\in \text{MF}: S_M>0\;P^m\text{-a.s.} \},
\]
\[
     \text{MF}_{++}:=\{M\in \text{MF}:M \text{ does not vanish, i.e. } S_M\geq \zeta\;P^m\text{ -a.s.}\}.
\]
Then $\text{MF}_{++}\subset \text{MF}_+\subset \text{MF}$.  If $M\in \text{MF}_+$, then $E_M=E\;m$-a.e., whereas $E_M$ is finely open (hence q.e. quasi-open). It follows that $E_M=E$ q.e.

If $M$ is right, define for each $x\in E_M$ a probability $Q^x$ on $(\Omega, \mathcal{M})$ by
\begin{equation}\label{POS}
	Q^x(Z):=E^x\int_0^\infty Z\circ k_t d(-M_t),\quad Z\in b\mathcal{M}
\end{equation}
where $(k_t)_{t\geq 0}$ are the killing operators on $\Omega$ defined by $k_t\omega(s)=\omega(s)$ if $t>s$ and $k_t\omega(s)=\Delta$ if $t\leq s$.
Then $(\Omega, \mathcal{M}, (\mathcal{M}_t), X_t, \theta_t, Q^x)$ is also a right Markov process with the state space $E_M$ and lifetime $S_M$, which is called the \emph{$M$-subprocess} of $X$ (or the subprocess of $X$ killed by $M$) and denoted by $(X,M)$ or $X^M$. The semigroup $(Q_t)_{t\geq 0}$ and resolvent $(V^\alpha)_{\alpha\geq 0}$ of $X^M$ can be given by
\begin{equation}
\begin{aligned}
  Q_t f(x) & = E^x(f(X_t)M_t), \\
   V^\alpha f(x) &= E^x\int_0^\infty e^{-\alpha t}f(X_t)M_tdt
\end{aligned}
\end{equation}
for $x\in E_M, t\geq 0, \alpha\geq 0$ and $Q_t(x,\cdot)=V^\alpha(x,\cdot)=0$ for any $x\notin E_M, t\geq 0, \alpha\geq 0$. An $(\mathcal{M}_t)$-stopping time $T$ is called a \emph{terminal time} if $T=t+T\circ \theta_t$ identically on $\{ t<T \}$. If $T$ is a terminal time, $1_{[0,T)}(t)$ is an MF of $X$ and $S_M$ is a terminal time if $M\in \text{MF}(X)$.
Let Exc$^\alpha$ (resp. Exc$^\alpha$($M$)) denote all of the $\alpha$-excessive functions of $X$ (resp. $X^M$) and in particular $\alpha$ will be omitted if it equals $0$. Clearly, $\text{Exc}^\alpha\subset \text{Exc}^\alpha(M)$.

Fix an $M\in\text{MF}$. We also refer the definition of the $M$-additive functionals of $X$ to  \cite{JGY}.
Let $\text{AF}(X,M)$ or $\text{AF}(M)$ (resp. $\text{PCAF}(X,M)$ or $\text{PCAF}(M)$) denote the set of all $M$-(resp. continuous) additive functionals. If $M_t\equiv 1$, then CAF(1) is exactly the set of all the PCAFs of $X$ introduced in Appendix~\ref{PCAF}. We write PCAF for CAF(1). For a terminal time $T$, $\text{AF}(T):=\text{AF}(1_{[0,T)})$ and write $T$-additive functional for $1_{[0,T)}$-additive functional. Further let $\bar{M}_t:=1-M_t$. Clearly $\bar{M}\in \text{AF}(M)$. Moreover the Stieltjes logarithm of $M$
\begin{equation}
    (\text{slog}M)_t :=\int_0^t 1_{\{s<S_M\}}\frac{d(-M_s)}{M_{s-}},\quad t\geq 0
\end{equation}
is an $S_M$-additive functional. We usually write $([M]_t)_{t\geq 0}$ for $((\text{slog}M)_t)_{t\geq 0}$.

\section{Proof of Theorem~\ref{GCT}}\label{PROOF}

\begin{proof}
    Similarly to the discussions to the killing transforms in Theorem~4.1 of  \cite{JGY} it follows from Theorem~5.10 of  \cite{PJF}, Proposition~\ref{VMS} and Theorem~\ref{FKT} that we only need to deal with the case of $M_t=1_{[0,S_M)}(t),t\geq 0$ such that $S_M$ is a terminal time and $S_M>0$ a.s.  In particular there exists  a subset $B\subset E\times E\setminus d$ such that
    $S_M=J_B:=\inf\{t>0:(X_{t-},X_t)\in B\}.$
    Without loss of generality assume that $E_M=E$ and \eqref{EUBM} holds for $\nu_{\bar{M}}=\nu_{S_M}=1_B\cdot \nu$. Further set $V^1f(x)=E^x\int_0^{S_M} f(X_s)ds$, similarly to Theorem~\ref{FKT} it suffices  to prove that for $f\in pL^2(E,m)$ and $u\in p\mathcal{F}$,
    \begin{equation}\label{VFUF}
        V^1f\in \mathcal{F},\quad (u,f)_m=\mathcal{E}(u,V^1f)+\nu_{S_M}(u\otimes V^1f).
    \end{equation}
First assume $B\subset \{(x,y):\rho(x,y)>c\}$ for some  constant $c>0$ where $\rho$ is the metric on $E$. Then \eqref{VFUF} can be proved for this case similarly to   Theorem~4.1 of  \cite{JGY}.
For general $B\subset E\times E\setminus d$ let
    \[
        B_n:=B\cap \{(x,y):\rho(x,y)>\frac{1}{n}\},\quad T_n:=J_{B_n}.
    \]
    Since $S_M>0$ a.s. it follows that $\{T_n\}$ well converges decreasingly to $S_M$ in the sense that for any $\omega\in \Omega$, there exists  a constant $N=N(\omega)$ such that $T_n(\omega)=S_M(\omega)$ for all $n>N$. Denote by $(V^q_n)$ the resolvent of $(X,T_n)$ and then
    \begin{equation}\label{VNF}
        V^1_nf(x)=E^x\int_0^{T_n}\text{e}^{-t}f(X_t)dt \downarrow E^x\int_0^{S_M}\text{e}^{-t}f(X_t)dt=V^1f(x)
    \end{equation}
    as $n\rightarrow \infty$ for any $x\in E$. Clearly we have
    \[
        V^1_n f\in \mathcal{F},\quad (u,f)_m=\mathcal{E}_1(u,V^1_nf)+\nu_{T_n}(u\otimes V^1_nf)
    \]
    for $f\in pL^2(E,m)$ and $u\in p\mathcal{F}$.
Similarly to Theorem~4.1 of  \cite{JGY} that $V^1_nf\rightarrow V^1f$ weakly in $\mathcal{F}$ and in particular $V^1f\in \mathcal{F}$. Thus we need to prove for $f\in pL^2(E,m)$ and $u\in p\mathcal{F}$ that
    \[
        \lim_{n\rightarrow \infty}\nu_{T_n}(u\otimes V^1_nf)=\nu_{S_M}(u\otimes V^1f).
    \]
    Without loss of generality assume $X$ to be transient (see \text{\S\ref{THWD}}). The notations $\hat{g}, \hat{m}=\hat{g}\cdot m$ and $\check{X}$ are given in the notes before \text{Lemma \ref{DECOM}}. In particular $X$ and $\check{X}$ are in duality relative to the excessive function $\hat{m}$. Note that $\hat{g}\in \mathcal{F}$ is a q.e. strictly positive coexcessive q.c. function. Let $\check{B}=\{(x,y):(y,x)\in B\}$, $\check{S}_M=\check{J}_{\check{B}}:=\inf\{t>0:(\check{X}_{t-},\check{X}_t)\in \check{B}\}$ and similarly $\check{T}_n:=\check{J}_{\check{B}_n}$. Then $\check{S}_M$ (resp. $\check{T}_n$) is dual to $S_M$ (resp. $T_n$) relative to $\hat{m}$ and $\check{\nu}_{\check{S}_M}^{\hat{m}}$ (resp. $\check{\nu}_{\check{T}_n}^{\hat{m}}$) is the dual bivariate Revuz measure of $\nu_{S_M}^{\hat{m}}$ (resp.  $\nu_{T_n}^{\hat{m}}$) relative to $\hat{m}$ (see \S6 of  \cite{JGY}). Clearly $\check{T}_n$ well converges to $\check{S}_M$. It follows from (I.5.10, I.3.6) of  \cite{JGY} and \text{Theorem~\ref{EXIS}} that
    \begin{equation}\label{NTUO}
    \begin{aligned}
        \nu_{T_n}(u\otimes V^1_nf)&=(\hat{g}\cdot \nu_{T_n})((u/\hat{g})\otimes V^1_nf)=\nu_{T_n}^{\hat{m}}((u/\hat{g})\otimes V^1_nf)\\&= \check{\nu}_{\check{T}_n}^{\hat{m}}(V^1_nf\otimes(u/\hat{g}))=(f,\check{P}^1_{\check{T}_n}(u/\hat{g}))_{\hat{m}}
    \end{aligned}\end{equation}
    where $\check{P}^1_{\check{T}_n}(u/\hat{g})(x):=\check{P}^x[\text{e}^{-\check{T}_n}(u/\hat{g})(\check{X}_{\check{T}_n})],x\in E$. From the well convergence of $\{\check{T}_n\}$ we can deduce that \[\check{P}^1_{\check{T}_n}(u/\hat{g})(x)\rightarrow \check{P}^1_{\check{S}_M}(u/\hat{g})(x)\] pointwisely. Let $w_k:=u\wedge k\hat{g}\in \mathcal{F}$ for any $k\geq 1$. It follows from Lemma~\ref{EXC} that
    \[
    	\mathcal{E}(w_k.w_k)\leq \mathcal{E}(w_k,u)\leq K_1\mathcal{E}_1(w_k,w_k)^{\frac{1}{2}}\cdot \mathcal{E}_1(u,u)^{\frac{1}{2}}
    	\]
  for some constant $K_1>0$. Hence
    \begin{equation}\label{MEWK}
        \mathcal{E}_1(w_k,w_k)\leq (K_1+1)^2\mathcal{E}_1(u,u),\quad  k\geq 1
    \end{equation}
and it follows from \eqref{NTUO} that
    \[
        \nu_{T_n}(w_k\otimes V^1_nf)=(f,\check{P}^1_{\check{T}_n}(w_k/\hat{g}))_{\hat{m}}.
    \]
On the other hand  since $f\in pL^2(E,m)$ we have
  \[
  \int f(x)\hat{m}(dx)=\int f(x)\hat{g}(x)m(dx)<\infty.
  \]
   From $|w_k|\leq k\hat{g}$ and the bounded convergence theorem we can deduce that
    \[
        \nu_{T_n}(w_k\otimes V^1_nf)=(f,\check{P}^1_{\check{T}_n}(w_k/\hat{g}))_{\hat{m}}\rightarrow (f,\check{P}^1_{\check{S}_M}(w_k/\hat{g}))_{\hat{m}}
    \]
    as $n\rightarrow \infty$. Similarly to \eqref{NTUO} it follows that
    \[
        (f,\check{P}^1_{\check{S}_M}(w_k/\hat{g}))_{\hat{m}}=\nu_{S_M}(w_k\otimes V^1f)
    \]
and thus $(w_k,f)_m=\mathcal{E}_1(w_k,V^1f)+\nu_{S_M}(w_k\otimes V^1f)$. Since $w_k\uparrow u$ and $w_k$ is weak-$\tilde{\mathcal{E}}_1$ convergent to $u$ by \eqref{MEWK} (hence a C\'{e}saro average subsequence of $\{w_k\}$ is  strongly convergent to $u$), it follows from the monotone convergence theorem that
    \[
    (u,f)_m=\mathcal{E}_1(u,V^1f)+\nu_{S_M}(u\otimes V^1f).
    \]
That completes the proof.     
\end{proof}



\end{document}